\documentclass[12pt]{amsart}

\usepackage{color}
\usepackage{amssymb}
\usepackage{amsmath}
\usepackage{mathtools}
\usepackage{mathrsfs}
\usepackage{graphicx}
\usepackage{tikz}
\usepackage{multicol}
\usepackage{url}
\usepackage{bm}
\usepackage{booktabs}
\usepackage{longtable,array,makecell}
\usepackage{enumitem}
\usepackage{microtype}
\allowdisplaybreaks

\numberwithin{equation}{section}

\theoremstyle{plain}
\newtheorem{thm}{Theorem}[section]
\newtheorem{lem}[thm]{Lemma}
\newtheorem{prop}[thm]{Proposition}
\newtheorem{cor}[thm]{Corollary}

\theoremstyle{definition}

\theoremstyle{remark}
\newtheorem{rem}[thm]{Remark}

\usepackage[colorlinks,citecolor=red,linkcolor=blue,hypertexnames=false]{hyperref}

\hypersetup{pdftitle={Intersections and Minkowski Sums of Four-Corner Cantor Dusts with the Unit Circle},pdfauthor={Jiangtao Li, Zhao Shen, Yufeng Wu}}

\newcommand{\R}{\mathbb{R}}
\newcommand{\N}{\mathbb{N}}
\newcommand{\Z}{\mathbb{Z}}
\newcommand{\Leb}{\mathcal{L}}
\newcommand{\cardc}{\mathfrak{c}}
\newcommand{\dimH}{\dim_{\mathrm H}}

\newcommand{\interior}{\operatorname{int}}
\newcommand{\diam}{\operatorname{diam}}
\newcommand{\conv}{\operatorname{conv}}

\begin{document}
\baselineskip 16pt

\title{Intersections and Minkowski Sums of Four-Corner Cantor Dusts with the Unit Circle}

\author{Jiangtao Li}
\address[]{School of Mathematics and Statistics\\HNP-LAMA\\Central South University\\Changsha, 410083, China}
\email{lijiangtao@csu.edu.cn}

\author{Zhao Shen}
\address[]{School of Mathematics and Statistics\\HNP-LAMA\\Central South University\\Changsha, 410083, China}
\email{sz1021@csu.edu.cn}

\author{Yufeng Wu}
\address[]{School of Mathematics and Statistics\\HNP-LAMA\\Central South University\\Changsha, 410083, China}
\email{yufengwu.wu@csu.edu.cn}

\keywords{Cantor sets, fractal intersections, Minkowski sums,  Hausdorff dimension, Newhouse thickness}

\subjclass[2020]{Primary 28A80; Secondary 28A78, 37C45}

\begin{abstract}
For $0<\lambda<1/2$, let $K_{\lambda}$ be the attractor of the iterated function system $\{\lambda x, \lambda x+1-\lambda\}$, and put
$C_\lambda=K_{\lambda}\times K_{\lambda}$. We study the intersection
\[
E_\lambda=C_\lambda\cap S^1
\]
and the Minkowski sum
\[
A_{\lambda}=C_\lambda+S^1,
\]
where $S^1$ is the unit circle. For the intersection problem, we prove that
$E_\lambda$ has the cardinality of the continuum for $(\sqrt{3}-1)/2<\lambda<1/2$,
and
$\dimH E_\lambda>0$ for $\sqrt{2}-1<\lambda<1/2$. We also establish quantitative lower bounds for $\dimH E_\lambda$
for $\lambda$ near $1/2$; in particular,
$\dimH E_\lambda$ approaches $1$ as $\lambda\uparrow1/2$.

For the Minkowski sum problem, we prove that
\(A_{\lambda}\) has nonempty interior throughout the
previously open range \(1/4<\lambda<1/3\),
answering a question of Simon and Taylor~\cite{SimonTaylor2020}. Together with earlier results of Simon and Taylor, our theorem yields the complete
classification: $A_{\lambda}$ has nonempty interior in $\R^2$ if and only if $1/4<\lambda<1/2$. More generally, we prove that $C_\lambda+\Gamma$ has
nonempty interior for every regular $C^1$ closed curve
$\Gamma$ whenever $1/4<\lambda<1/2$.
\end{abstract}

\maketitle

\section{Introduction and main results}

Intersections of fractal sets with smooth curves and Minkowski sums of fractal sets with curves are two basic themes in geometric measure theory. Let \(F\subset\R^2\) be a planar fractal set and let
\(\Gamma\subset\R^2\) be a curve. The intersection problem concerns the cardinality and dimension of \(F\cap\Gamma\),
whereas the sum problem concerns the dimension, Lebesgue measure, and
interior of the Minkowski sum
\[F+\Gamma:=\{x+y:x\in F,\ y\in\Gamma\}.\]

The classical Marstrand
slicing theorem provides a useful benchmark for the intersection problem.
Throughout the paper, let $S^1:=\{(x,y)\in\R^2:x^2+y^2=1\}$ denote the unit circle.  For \(e\in S^1\) and \(t\in\R\),
let
\[
L_{e,t}:=\{u\in\R^2:u\cdot e=t\}.
\]
We write $\dimH$ for Hausdorff dimension and $\mathcal{H}^s$ for $s$-dimensional Hausdorff measure. Marstrand~\cite{Marstrand54} proved that if \(F\subset\R^2\) is Borel, then for every fixed \(e\in S^1\),
\[
\dimH(F\cap L_{e,t})
\leq \max\{\dimH F-1,0\}
\]
for Lebesgue almost every \(t\in\R\). Moreover, if $0<\mathcal{H}^s(F)<\infty$ for some $1<s<2$,
then, for \(\mathcal{H}^1\)-almost every \(e\in S^1\), the above upper bound is sharp:
there is a set of parameters \(t\) of positive Lebesgue measure
for which
\[
\dimH(F\cap L_{e,t})=s-1.\] Thus, in almost every direction,
a positive-measure family of parallel lines meets \(F\) in sets of the expected dimension
\(s-1\). See
\cite{Marstrand54,Mattila95} for more details on Marstrand's theorem.

Marstrand's theorem is an almost-everywhere statement and therefore does not determine the intersection with an individual prescribed line. Nor does it directly apply to a prescribed nonlinear curve, such as the unit circle. For such a fixed curve $\Gamma$,
even deciding whether \(F\cap\Gamma\) is finite or infinite can be
difficult.

For $0<\lambda<1/2$, let $K_{\lambda}$ be the attractor of the iterated function system $\{\lambda x, \lambda x+1-\lambda\}$. That is, $K_{\lambda}$ is the unique nonempty compact set satisfying
\[
K_{\lambda}=\lambda K_{\lambda}\cup(\lambda K_{\lambda}+1-\lambda).
\]
Write
\[
C_\lambda:=K_{\lambda}\times K_{\lambda}.
\]
We refer to $C_\lambda$ as the four-corner Cantor dust with contraction ratio $\lambda$. Both $K_\lambda$ and $C_\lambda$ are self-similar sets
satisfying the strong separation condition, generated respectively by
two and four similarities with contraction ratio $\lambda$. Hence 
\[\dimH K_{\lambda}=\frac{\log2}{\log(1/\lambda)}, \qquad \dimH C_{\lambda}=\frac{\log4}{\log(1/\lambda)}.\]
In particular, $\dimH C_{\lambda}>1$ precisely when $\lambda>1/4$. See \cite{Falconer2014,Hutchinson1981} for background on self-similar sets and iterated function systems.

The two sets studied in this paper are
\begin{equation*}
E_\lambda:=C_\lambda\cap S^1,
\qquad
A_{\lambda}:=C_\lambda+S^1.
\end{equation*}
The circle-intersection problem arose from Yu's study of the
distribution of missing-digit points near nondegenerate analytic manifolds~\cite{Yu2023}. Yu asked whether
\[
S^1\cap (K_{1/5}\times K_{1/5})
\]
is infinite. Jiang, Kong, Li, and Wang~\cite{JKLW2026} answered this
question negatively and initiated a systematic study of
\(E_\lambda\). They proved that $E_\lambda=\{(0,1),(1,0)\}$ for  $0<\lambda\leq2-\sqrt{3}$, that \(E_\lambda\) is nontrivial for
\(0.330384\leq\lambda<1/2\), and that it has continuum cardinality
for \(0.407493\leq\lambda<1/2\). Here $E_{\lambda}$ is called nontrivial if it contains a point other than $(0,1)$ and $(1,0)$. They also constructed a sequence
\(\lambda_n\searrow2-\sqrt{3}\) for which \(E_{\lambda_n}\) is
nontrivial, showing that the endpoint \(2-\sqrt{3}\) is sharp for the
uniform triviality statement. They further conjectured that \(E_\lambda\) is infinite for $
2-\sqrt{3}<\lambda<1/2$. This conjecture was recently disproved by Jiang and Xie
\cite[Theorems~1.1 and~1.2]{JiangXie2026}.
More precisely, for $n\geq 1$,  let $\lambda_n$ be the unique root in $(0,1/2)$ of
\[
\lambda^{2n+2}+\lambda^2-4\lambda+1=0.
\]
They explicitly determined $E_{\lambda_n}$, showing that it consists of exactly $6$ points. They also proved that, for every $n\geq1$,
there is a nonempty open interval $J_n\subset(\lambda_{n+1},\lambda_n)$
on which $E_\lambda=\{(0,1),(1,0)\}$.

For the middle-third Cantor set
\(K_{1/3}\), Du, Jiang, and Yao~\cite{DuJiangYao25R} proved that $E_{1/3}$
contains at least \(10,000,000\) distinct points \((x_i,y_i)\), each satisfying
\(x_i\notin\mathbb Q\). Subsequently, Jiang, Kong, Li, and Wang asked in
\cite[Question~5.1]{JKLW2026} whether $E_{1/3}$ is infinite
and, moreover, asked for its Hausdorff dimension. They conjectured
that
\[
\dimH E_{1/3}=\frac{2\log2}{\log3}-1=\dimH C_{1/3}-1,
\]
in agreement with the dimension heuristic suggested by Marstrand's slicing theorem.
The infinitude part of
\cite[Question~5.1]{JKLW2026} was recently answered by
Jiang and Xie~\cite[Theorem~1.3 and Corollary~1.4]{JiangXie2026}. Indeed, they established the
infinitude statement recorded in Theorem~\ref{thm:intersection}(i)
below, with the same threshold $\lambda_\infty$.
Their theorem applies more generally to graphs of functions that are
$C^2$ near $0$ and satisfy $f(0)=1$, $f'(0)=0$, and $f''(0)=-1$. After submitting the first version of this paper, we learned of the
earlier manuscript of Jiang and Xie, which had been submitted in
August 2026. We obtained the circle infinitude result independently,
but acknowledge their priority for
Theorem~\ref{thm:intersection}(i).
We retain our proof for completeness and to keep the exposition
self-contained.

In this paper, we establish   new results on the intersection problem.  Specifically, we improve the parameter range for continuum
cardinality, prove positive Hausdorff dimension,
and obtain quantitative dimension estimates near $\lambda=1/2$.
The following theorem collects these results in parts~\textup{(ii)--(iv)},
together with the infinitude statement of Jiang and Xie in
part~\textup{(i)}.
 
\begin{thm}\label{thm:intersection}
Let $0<\lambda<1/2$. Let \(\lambda_{\infty}\approx0.305854\) be the unique root in $(1/4,1/2)$ of $2x^3-3x^2+4x-1=0$, and set 
\begin{equation*}
\lambda_*:=\frac{\sqrt{3}-1}{2},
\qquad
\lambda_{\mathrm H}:=\sqrt{2}-1.
\end{equation*}
Then the following statements hold.
\begin{enumerate}[label=\textup{(\roman*)}]
\item If $\lambda_{\infty}<\lambda<1/2$, then $E_\lambda$ is infinite.  In fact, it contains a sequence of distinct points converging to $(0,1)$ and a symmetric sequence converging to $(1,0)$.
\item If $\lambda_*<\lambda<1/2$, then $E_\lambda$ has the cardinality of the continuum.
\item If $\lambda_{\mathrm H}<\lambda<1/2$, then $\dimH E_\lambda>0$.
\item 
If $1/2-3\times10^{-9}\leq\lambda<1/2$, then
\begin{equation*}
\dimH E_\lambda\geq1-2102\frac{1-2\lambda}{\lambda}.
\end{equation*}
Consequently, $\lim_{\lambda\uparrow1/2}\dimH E_\lambda=1$.
\end{enumerate}
\end{thm}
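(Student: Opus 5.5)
Near $(0,1)$ I would write the circle locally as the graph $y=f(x)=\sqrt{1-x^2}$ and look for points of $C_\lambda$ on it. The basic reduction is to the self-similar structure near the corner $(0,1)$. A point $(x,y)$ with $x\in\lambda^n K_\lambda$ small lies on $S^1$ exactly when $1-y=1-\sqrt{1-x^2}=x^2/2+O(x^4)$. For $y$ we need $1-y\in\lambda^m(1-K_\lambda)=\lambda^m K_\lambda$, using the symmetry $1-K_\lambda=K_\lambda$ with $m\approx 2n$. So after rescaling $x=\lambda^n u$ and $1-y=\lambda^{2n}v$ with $u,v\in K_\lambda$, the condition becomes
\[
v=g_n(u):=\lambda^{-2n}\bigl(1-\sqrt{1-\lambda^{2n}u^2}\bigr)=\tfrac{u^2}{2}+O(\lambda^{2n}).
\]
In the limit this is the parabola $v=u^2/2$ meeting $K_\lambda\times K_\lambda$, with one extra scale shift of $\lambda^{-k}$ allowed in $v$. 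For (i) it suffices to find a robust transversal intersection of $K_\lambda\times K_\lambda$ with the graph of $u\mapsto \lambda^{-k}u^2/2$ that survives $C^1$-small perturbations; applying this to $g_n$ for every large $n$ gives distinct points converging to $(0,1)$, and symmetry gives the sequence converging to $(1,0)$. I would obtain this robust intersection from the Newhouse gap lemma in a "curve" form. Set $h(u)=\sqrt{2\lambda^k v}$ restricted to subintervals. If $K$ and a $C^1$ image $h(K)$ are interleaved intervals of Cantor sets whose thickness product exceeds $1$, they intersect, and this persists under $C^1$ perturbation. The thickness of $K_\lambda$ is $\lambda/(1-2\lambda)$, and a diffeomorphism with distortion close to $1$ on small pieces almost preserves it. The cubic threshold $\lambda_\infty$ should come from the optimal choice of the pieces and the exact linking (interleaving) requirement; I would verify this by direct computation rather than from thickness alone.

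For (ii) I would replace the parabola by the circle itself at a macroscopic scale, working in the first quadrant near $(1/\sqrt2,1/\sqrt2)$. Locally, $E_\lambda$ is $\{x\in K_\lambda: \sqrt{1-x^2}\in K_\lambda\}$, that is, $K_\lambda\cap\phi(K_\lambda)$ with $\phi(y)=\sqrt{1-y^2}$. The plan is a nested (Cantor-tree) construction. Inductively choose pairs of basic intervals $I\subset K$-level-$n$ and $J$ with $\phi(J)$ linked to $I$. Show each linked pair has at least two disjoint linked descendant pairs; branching then gives $2^{\aleph_0}$ points. The key inequality is thickness: $\tau(K_\lambda)^2\cdot(\text{distortion of }\phi)>1$ requires $\lambda/(1-2\lambda)>1$, i.e. $\lambda>1/3$, which is too weak. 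To reach $\lambda_*=(\sqrt3-1)/2$, equivalently $2\lambda^2+2\lambda=1$, I would exploit that within the linked configurations only specific gap/bridge ratios occur. This calls for a finer combinatorial lemma for two Cantor sets of ratio $\lambda$ under a nearly affine map with slope near $1$. The lemma would say that if one interval of level $n$ and one $\phi$-image interval overlap by more than a $\lambda$-dependent fraction, then children with the same property exist, in at least two ways. The condition $\lambda^2+\lambda>1/2$ should be precisely when the overlap fraction is self-reproducing.

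For (iii) and (iv) I would run the same tree but count children: if each linked pair at scale $\lambda^{n}$ has at least $N$ linked descendants at scale $\lambda^{n+\ell}$, then mass distribution gives $\dimH E_\lambda\ge \log N/(\ell\log(1/\lambda))$. For $\lambda>\sqrt2-1$ (i.e. $\lambda^2+2\lambda>1$) I expect that two children exist after bounded levels with uniform control, giving positive dimension. For (iv), near $1/2$ the gaps have relative size $(1-2\lambda)/\lambda$. A line segment of slope near $-1$ in $C_\lambda$ misses at most a proportion $O((1-2\lambda)/\lambda)$ of children at each level. Counting lost branches with explicit constants (2102) then gives the bound. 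The main obstacle is step (ii): proving the self-reproducing linked-pair lemma with sharp constant $\lambda_*$, uniformly in the nonlinearity of $\phi$, which needs careful control of $C^2$ distortion at deep levels.
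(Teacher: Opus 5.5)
Your mechanism for (i) cannot reach $\lambda_\infty$. The gap lemma needs $\tau_1\tau_2\ge1$. But any Cantor subset of $K_\lambda$ has thickness at most $\lambda/(1-2\lambda)$: look at the gap separating its two halves inside the smallest cylinder containing it. Thickness is invariant under affine maps, and a $C^1$ map $h$ with $m_h\le|h'|\le M_h$ changes it by a factor in $[m_h/M_h,M_h/m_h]$ (Lemma~\ref{lem:tauN-distortion} applied to $h$ and $h^{-1}$). With the near-isometric pieces you propose, $\tau_1\tau_2\approx(\lambda/(1-2\lambda))^2$, which is $\le1$ exactly when $\lambda\le1/3$. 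Since $\lambda_\infty\approx0.3059<1/3$, no choice of pieces or interleaving rescues $(\lambda_\infty,1/3]$.

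The underlying problem is that thickness ignores the slope of the curve, and below $1/3$ the slope is what decides. For instance, $K_{1/4}+2K_{1/4}=[0,3]$ although $\tau_N(K_{1/4})^2=1/4$. The missing tool is the interval-filling Lemma~\ref{lem:filling} (after Jiang): if $F_u<0<F_v$ and $(1-2\lambda)/\lambda\le-F_u/F_v\le1/(1-2\lambda)$ on a square $Q$, then $F$ maps the scaled copy of $K_\lambda\times K_\lambda$ onto all of $F(Q)$, for every $\lambda\ge1/4$. The paper applies it to $H_\varepsilon=\lambda v-u^2/2-\varepsilon v^2/2$ with $u,v$ ranging over $K_{10}$ and $\varepsilon=\lambda^{2n+2}$. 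A zero gives $(\lambda^nu,1-\lambda^{2n+1}v)\in E_\lambda$, and the slope and sign hypotheses reduce to $2\lambda^3-3\lambda^2+4\lambda-1>0$. So your rescaling near $(0,1)$, with its $O(\lambda^{2n})$ perturbation, is the right picture, but the intersection must come from this sum-type filling, not from the gap lemma.

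Parts (ii)--(iv) are stated as expectations, and in each the decisive ingredient is absent.

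For (ii), the ``self-reproducing linked-pair lemma'' is the whole content and is never formulated, and the diagonal regime with slope near $1$ is the wrong place to look. The paper's criterion (Proposition~\ref{prop:continuum-criterion}) uses cylinders $I=[a,a+L]$, $J=[b,b+M]$ of possibly different levels and $g=x^2+y^2$. It defines $O=bM-(a+L)L$, $A=\lambda bM-c_\lambda(a+L)L$ and $B=\lambda aL-d_\lambda(b+M)M$, with $c_\lambda=1-\lambda-\lambda^2$ and $d_\lambda=1-2\lambda$. These are inherited by descendants of equal relative depth, and $O,A,B\ge0$ imply that every interior point of $g(I\times J)$ lies in the interiors of the images of two distinct descendant rectangles. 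Together with $1\in\interior g(I\times J)$ this produces the binary tree. Now $A,B\ge0$ force $c_\lambda/\lambda<bM/(aL)<\lambda/d_\lambda$, a window that is nonempty iff $\lambda^2>c_\lambda d_\lambda$, i.e.\ $\lambda>\lambda_*$ (Proposition~\ref{prop:ABlambda-star}). At $b=a$ (your diagonal regime) the ratio $bM/(aL)$ is a power of $\lambda$, and the window contains one only when $\lambda>\sqrt2-1$. The paper reaches $\lambda_*$ near $(0,1)$ with $J$ of roughly twice the level of $I$ (the words $0^p\xi_t$ and $1^{2p+1}0\xi_{t-2}$). This makes the ratio close to $1+\lambda$, which lies inside the window for every $\lambda>\lambda_*$; for $\lambda\ge2/5$ a fixed pair $I_{010},I_{1110}$ suffices.

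For (iii), the tree of (ii) has unbounded branching depth: the depth grows as $1$ approaches an endpoint of $g(I\times J)$. So the tree gives no dimension bound by itself. The uniformity you ``expect'' comes in the paper from the Hunt--Kan--Yorke theorem, applied to $K_\lambda\cap I_n$ and $T(K_\lambda\cap J_n)$ near a point of $E_\lambda$ whose coordinates are not cylinder endpoints; such a point exists by (ii). The symmetric Hunt--Kan--Yorke condition $\lambda/(1-2\lambda)>1+\sqrt2$ is exactly $\lambda>\sqrt2-1$.

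For (iv), the per-level loss count is only heuristic. A curve crossing a square can meet as few as one of its four children, for example when it passes near a corner. Converting averaged counts into a lower bound for $\dimH$ needs the uniform quantitative control of the Falconer--Yavicoli theorem. The paper applies that theorem to $K_{101}$ and $T(K_{101})$ with $\beta_\lambda=1/4$ and $c=25/26$. The constant $2102$ is a bound for $768(1+\theta_0)^{26/25}$ coming from their explicit constants, not something a generic branch count produces.
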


A sharper two-sided first-order estimate for
\(1-\dimH E_\lambda\) as \(\lambda\uparrow1/2\) is given in
Theorem~\ref{thm:dim-asy}.

We next turn to the Minkowski sum problem. Minkowski sums of planar sets and curves were studied
systematically by Simon and Taylor
\cite{SimonTaylor2020,SimonTaylor2022}. Here and below, \(\Leb^2\) denotes the two-dimensional Lebesgue measure. Simon and Taylor proved that 
\[
\dimH A_{\lambda}
=
1+\frac{\log4}{\log(1/\lambda)}<2
\qquad
\left(0<\lambda<\frac{1}{4}\right),
\]
whereas
\[
\dimH A_{1/4}=2
\quad\text{and}\quad
\Leb^2(A_{1/4})=0.
\]
They also proved that \(\Leb^2(A_{\lambda})>0\) for
\(\lambda>1/4\), and that \(A_{\lambda}\) has nonempty
interior for \(\lambda\geq1/3\). They left open whether \(A_{\lambda}\) has nonempty interior in the
remaining range  \(1/4<\lambda<1/3\); see \cite[Remark~2.8]{SimonTaylor2020}. Our second main result closes this gap.

\begin{thm}\label{thm:sum}
If $1/4<\lambda<1/2$, then $A_{\lambda}$ has nonempty interior. 
\end{thm}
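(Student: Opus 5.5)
Since Simon and Taylor already treat $\lambda\ge 1/3$, the new case is $1/4<\lambda<1/3$. The plan, however, gives one argument for the whole range $1/4<\lambda<1/2$: reduce to a thickness statement on a line and apply the Newhouse gap lemma. The point is that near a point of $S^1$ with tangent direction $\theta$, the set $S^1$ looks like a short segment in direction $\theta$. A point $z$ lies in $A_\lambda$ if and only if $(z-S^1)\cap C_\lambda\neq\emptyset$, i.e.\ the unit circle centred at $z$ meets $C_\lambda$. So it suffices to find an open set of centres $z$ whose unit circles all meet $C_\lambda$.

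\textbf{Step 1: a projected Cantor set on a line.} Fix a direction $e\in S^1$ and consider $\pi_e(C_\lambda)=\{u\cdot e:u\in C_\lambda\}$. With $e=(\cos\phi,\sin\phi)$ this equals $\cos\phi\,K_\lambda+\sin\phi\,K_\lambda$. By Newhouse's theorem, the sum of two Cantor sets with thicknesses $\tau_1,\tau_2$ satisfying $\tau_1\tau_2\ge1$ contains an interval, provided neither set lies in a gap of the other. Here $\tau(K_\lambda)=\lambda/(1-2\lambda)$, and $\tau^2\ge1$ only for $\lambda\ge1/3$, which is exactly why the earlier results stop at $1/3$. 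For $\lambda>1/4$ the plan is to pass to a subsystem: iterate the IFS of $C_\lambda$ $n$ times and take the $4^n$ pieces projected to the line. Choose $\phi$ with $\tan\phi$ rational or algebraically convenient, e.g.\ $\tan\phi=1/2$, so that the projected first-level pieces, of length about $\lambda(1+\tan\phi)$ times the parent, do not overlap and form a \emph{homogeneous} Cantor set $P\subset\pi_e(C_\lambda)$ with ratio $\lambda$ and $4$ pieces. For $\lambda>1/4$ the four pieces of scaled length $\lambda\cdot(\text{size})$ can cover the convex hull with gaps smaller than the pieces, and in fact with no gaps when $4\lambda\ge1$ if the offsets $\{0,\tan\phi\}(1-\lambda)+\{0,1\}(1-\lambda)$ are spaced well. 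So I expect $\pi_e(C_\lambda)\supset$ an interval for suitable $\phi$, with thickness arbitrarily large after a small perturbation of $\phi$.

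\textbf{Step 2: replacing the line by the curved circle with robust thickness.} A line interval is not enough, because $S^1$ is curved. The robust form is needed: for every $\phi$ in an open arc $I$ of directions, the fibres $\{u\in C_\lambda: u\cdot e=t\}$ must be nonempty for all $t$ in an interval depending continuously on $\phi$. I would run a nested-interval (gap lemma) construction directly along the circle. Define $g_z(u)=|u-z|$ for $u\in C_\lambda$ and $z$ near a fixed centre $z_0$, with $|z_0-u_0|=1$ for some $u_0\in C_\lambda$. At small scales $\rho$, $g_z$ restricted to a level-$k$ cylinder of $C_\lambda$ is, up to relative error $O(\lambda^k)$, the linear map $u\mapsto u\cdot e(u_0,z)$ composed with the affine IFS map. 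Each cylinder image $g_z(\text{cylinder})$ is therefore a $C^1$-perturbation of the projected Cantor set from Step 1. The thickness of $C^1$-nearly-affine Cantor sets is stable, so after enough levels the images retain thickness and "no-gap" properties. A standard nested argument then shows that $1\in g_z(C_\lambda)$ for all $z$ in a neighbourhood of $z_0$: at each level pick a subcylinder whose image interval contains $1$, which is possible because the children's images cover the parent's image interval up to gaps smaller than adjacent pieces. This gives $B(z_0,r)\subset A_\lambda$.

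\textbf{Main obstacle.} The hard part is Step 1: showing that for $1/4<\lambda<1/3$ some direction makes the $4$-piece projected system \emph{interval-covering} (overlapping or abutting children), uniformly over an open set of directions. Bare dimension counting gives only $4\lambda>1$, which is not sufficient for covering. I would try first to take $\tan\phi$ near a value where the four offsets $0,\ t(1-\lambda),\ 1-\lambda,\ (1+t)(1-\lambda)$ are equally spaced at distance $\le\lambda(1+t)$. Equal spacing needs $t=1/2$, where the spacing is $(1-\lambda)/2$ against piece length $3\lambda/2$; covering then requires $3\lambda\ge1-\lambda$, i.e.\ $\lambda\ge1/4$. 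This is exactly the threshold, with strict inequality giving slack for an open set of directions and for the curvature perturbation. That slack is what makes Step 2 work.
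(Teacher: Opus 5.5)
Your argument is correct and is essentially the paper's. The paper works at the point $(\sqrt5/5,2\sqrt5/5)$ of $S^1$, where the tangent has slope $-1/2$; this is your $\tan\phi=1/2$ after swapping coordinates. Lemma~\ref{lem:filling} (via Lemma~\ref{lem:nonlinear-fill}) is precisely your overlapping-chain nested-interval argument. It holds whenever the local slope $\phi'$ lies in $[1-2\lambda,\lambda/(1-2\lambda)]$, and this interval contains $1/2$ in its interior exactly when $\lambda>1/4$. The paper applies it on a small cylinder $LK_\lambda\times LK_\lambda$, then varies only the horizontal coordinate of the centre and takes a common vertical interval, rather than a ball of centres.

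Three small corrections. First, the Newhouse-thickness framing in Step~1 is not needed. Second, in Step~2 the nested argument needs the children's images to genuinely overlap, which your $t=1/2$ computation provides; it is not enough that the gaps are smaller than the pieces. Third, you must choose $z_0$ so that $1$ lies in the interior of $g_{z_0}(Q)$ for the initial cylinder square $Q$. Otherwise, for example when $u_0$ is a corner of $Q$, the conclusion can fail for nearby centres.
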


Combining Theorem~\ref{thm:sum} with the results of Simon and Taylor~\cite{SimonTaylor2020,SimonTaylor2022} yields the following classification. 

\begin{cor}\label{cor:classification}
Let $0<\lambda<1/2$. Then 
$A_{\lambda}$ has nonempty interior if and only if $\lambda>1/4$. 
\end{cor}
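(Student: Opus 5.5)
The corollary has two directions, and only one uses anything new.

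For the ``if'' direction, let $1/4<\lambda<1/2$. Then Theorem~\ref{thm:sum} gives directly that $A_\lambda$ has nonempty interior. (For $\lambda\geq1/3$ this was already known from Simon and Taylor~\cite{SimonTaylor2020}; what is new is the range $1/4<\lambda<1/3$.)

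For the ``only if'' direction, suppose $0<\lambda\leq1/4$. Any set with nonempty interior contains a disc, so it has positive Lebesgue measure and Hausdorff dimension $2$. We show $A_\lambda$ fails one of these.

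\begin{itemize}
\item If $\lambda<1/4$, then by Simon and Taylor~\cite{SimonTaylor2020,SimonTaylor2022}
\[
\dimH A_\lambda=1+\frac{\log 4}{\log(1/\lambda)}<2 .
\]
So $A_\lambda$ has empty interior.
\item If $\lambda=1/4$, then Simon and Taylor show $\Leb^2(A_{1/4})=0$. Again $A_{1/4}$ has empty interior.
\end{itemize}

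This proves the equivalence. No step is difficult, since everything reduces to Theorem~\ref{thm:sum} and the cited results. The only point to check is that those cited results, including the critical case $\lambda=1/4$, are stated for exactly the set $A_\lambda=C_\lambda+S^1$ with $C_\lambda=K_\lambda\times K_\lambda$ as normalized here.
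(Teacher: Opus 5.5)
Your proposal is correct and follows the paper's proof exactly. It uses Theorem~\ref{thm:sum} for $\lambda>1/4$, the Simon--Taylor formula $\dimH A_\lambda=1+\log4/\log(1/\lambda)<2$ for $\lambda<1/4$, and their zero-area result for $A_{1/4}$ at the critical parameter.
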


The proof of Theorem~\ref{thm:sum} is local.
When $\lambda>1/4$, we choose a horizontal parameter and a
sufficiently small Cantor cylinder so that the local slope
conditions of Lemma~\ref{lem:nonlinear-fill} are satisfied.
This nonlinear interval-filling lemma then shows that the
corresponding image of the Cantor product is a nondegenerate
interval. By continuity, varying the horizontal parameter
over a small open interval yields a common vertical interval
and hence an open rectangle contained in $A_{\lambda}$. The same argument also shows that
$C_\lambda+\Gamma$ has nonempty interior for every regular
$C^1$ closed curve $\Gamma$ whenever $1/4<\lambda<1/2$;
see Corollary~\ref{cor:closed-curve-sum}.

The paper is organized as follows. Section~\ref{sec:prelim} fixes notation and recalls some basic facts.  Section~\ref{sec:infinite} proves the infinitude result for $E_\lambda$. Section~\ref{sec:continuum} establishes the continuum-cardinality
assertion, and Section~\ref{sec:dimension} gives the dimension estimates. Section~\ref{sec:sum} proves  Theorem~\ref{thm:sum}. Finally, in Section~\ref{sec:open-problems}, we discuss the remaining
gaps between the known parameter ranges and formulate several open
problems.

\section{Preliminaries}\label{sec:prelim}

Throughout, $\log$ denotes the natural logarithm. Let $\N:=\{1,2,\ldots\}$ be the set of positive integers and $\N_0:=\N\cup\{0\}$. 
For a set $A\subset\R^d$, its cardinality, convex hull, diameter, and interior are denoted by $\#A$, $\conv A$,  $\diam A$, and  $\interior A$, respectively.  For an interval $I\subset \R$, $|I|$ denotes its length. The continuum cardinality is denoted by $\cardc=2^{\aleph_0}$.

For $n\in\N_0$, let $\{0,1\}^n$ be the set of all binary words of length $n$, and let 
\[
\{0,1\}^*
:=
\bigcup_{n\in\N_0}\{0,1\}^n.
\]
We adopt the convention that $\varnothing$ is the empty word and $\{0,1\}^0=\{\varnothing\}$. For \(w\in\{0,1\}^*\), let \(|w|\) denote its length.
Concatenation of two finite words \(u,v\) is denoted by \(uv\).
For \(n\in\N\), the words consisting of \(n\) zeros and \(n\)
ones are denoted by \(0^n\) and \(1^n\), respectively.

For $0<\lambda<1/2$, set
\[
f_0(x):=\lambda x,
	\qquad
	f_1(x):=\lambda x+1-\lambda.
\]
For \(n\in\N\) and
\(w=w_1\cdots w_n\in\{0,1\}^n\),  define
\[
f_w:=f_{w_1}\circ\cdots\circ f_{w_n}, \quad I_w:=f_w([0,1]),
\quad
K_w:=f_w(K_{\lambda}).\]
Note that $f_w(x)=\lambda^nx+a_w$ and thus $I_w=[a_w,a_w+\lambda^n]$, where 
\begin{equation*}
	a_w=(1-\lambda)\sum_{j=1}^n w_j\lambda^{j-1}.
\end{equation*}
For the empty word, set
$f_\varnothing:=\operatorname{id}$, the identity map, $I_\varnothing:=[0,1]$ and $K_\varnothing:=K_{\lambda}$. With this convention, for all
\(u,v\in\{0,1\}^*\), 
\[f_{uv}=f_u\circ f_v, \quad 
I_{uv}=f_u(I_v)\subset I_u, \quad  K_{uv}=f_u(K_v)\subset K_u.\]
For \(w\in\{0,1\}^*\) with   \(|w|=n\),   we call \(I_w\) and \(K_w\) a level-\(n\) cylinder interval and a level-\(n\) cylinder set,
respectively. A point that is an endpoint of some cylinder interval is called a cylinder endpoint.

Since $\lambda<1/2$, if
\(u,v\in\{0,1\}^n\) and \(u\neq v\), then $I_u\cap I_v=\varnothing$ and $K_u\cap K_v=\varnothing$. Consequently, $K_{w}=K_{\lambda}\cap I_w$ for $w\in \{0,1\}^*$.

On $[0,1]$, define
\begin{equation*}
T(x):=\sqrt{1-x^2}.
\end{equation*}
The map $T$ is continuous and strictly decreasing,  and it satisfies $T(T(x))=x$ for all $x\in [0,1]$. On $(0,1)$, put
\[
\psi(x):=|T'(x)|
=\frac{x}{\sqrt{1-x^2}}.
\]
Write
\begin{equation*}
X_\lambda:=K_{\lambda}\cap T(K_{\lambda}).
\end{equation*}
Then the intersection $E_{\lambda}$ can be written as the graph of $T$ over $X_{\lambda}$: 
\[E_\lambda=\{(x,T(x)):x\in X_\lambda\}.\]

\section{A sufficient threshold for infinitely many intersection points}\label{sec:infinite}

In this section we prove Theorem~\ref{thm:intersection}(i). We use a nonlinear interval-filling lemma to construct infinitely
many points of $E_\lambda$ accumulating at $(0,1)$ and $(1,0)$.

The interval-filling lemma below follows, by an affine change of
variables, from Jiang's result
\cite[Corollary~1.8]{Jiang2022}. We include a direct proof for completeness.

\begin{lem}\label{lem:filling}
Let $1/4\leq\lambda<1/2$. Set
$Q=[p,p+\ell]\times[q,q+\ell]$, where $p,q\in \R$  and $\ell>0$. Let $F=F(u,v)$ be a $C^1$ map from an open neighborhood of $Q$ to $\R$.  Suppose that throughout $Q$, the partial derivatives of $F$ satisfy 
\begin{equation}\label{eq:Fuv}
 F_u<0<F_v,
 \qquad
 \frac{1-2\lambda}{\lambda}
 \leq -\frac{F_u}{F_v}
 \leq\frac{1}{1-2\lambda}.
\end{equation}
Then
\[
 F\bigl((p+\ell K_{\lambda})\times(q+\ell K_{\lambda})\bigr)=F(Q).
\]
\end{lem}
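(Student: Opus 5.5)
Since $F$ is continuous and the Cantor product sits inside $Q$, the inclusion $\subseteq$ is immediate, and $F(Q)$ is a compact interval $[m,M]$ because $Q$ is connected and compact. Also $F_u<0<F_v$ forces $m=F(p+\ell,q)$ and $M=F(p,q+\ell)$. So the task is to show that every $t\in[m,M]$ is attained on $(p+\ell K_\lambda)\times(q+\ell K_\lambda)$. I plan a nested-square argument in the spirit of Newhouse's gap lemma. For each $t$, I will build a nested sequence of squares $Q_n=(p+\ell I_{u_n})\times(q+\ell I_{v_n})$ with $|u_n|=|v_n|=n$ and $t\in F(Q_n)$ for every $n$. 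Compactness then gives a limit point $(x,y)\in\bigcap_n Q_n$, which lies in the Cantor product. Continuity gives $F(x,y)=t$, because $F(Q_n)$ is an interval containing $t$ whose length tends to $0$.

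The inductive step reduces to the following claim. If $R=[a,a+r]\times[b,b+r]\subset Q$ is a square, then
\[
F(R)=\bigcup_{i,j\in\{0,1\}}F(R_{ij}),
\]
where $R_{ij}=[a+i(1-\lambda)r,\,a+i(1-\lambda)r+\lambda r]\times[b+j(1-\lambda)r,\,b+j(1-\lambda)r+\lambda r]$ are the four corner subsquares. Each $F(R_{ij})$ is an interval whose endpoints are the values of $F$ at its lower-right and upper-left corners. I will order the four intervals as
\[
F(R_{10}),\quad F(R_{00}) \text{ and } F(R_{11}) \text{ (in some order)},\quad F(R_{01}).
\]
The extreme values $m_R$ and $M_R$ are attained in $R_{10}$ and $R_{01}$ respectively. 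It then suffices to check that consecutive intervals overlap, that is, that no gap appears when passing from one to the next.

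The overlap checks are where the slope bounds in \eqref{eq:Fuv} enter. I will use the mean value theorem along axis-parallel and diagonal segments inside $R$. Writing $\rho=-F_u/F_v$, a horizontal step of length $h$ changes $F$ by about $-\rho F_v h$ and a vertical step by about $F_v h$, with $F_v$ evaluated at intermediate points. Consider the overlap of $F(R_{10})$ with $F(R_{00})$: the top of $F(R_{10})$ is $F(a+(1-\lambda)r,b+\lambda r)$ and the bottom of $F(R_{00})$ is $F(a+\lambda r,b)$. Comparing them requires a horizontal gap $(1-2\lambda)r$ to be compensated by a vertical rise $\lambda r$, which is exactly where $\rho\ge(1-2\lambda)/\lambda$ is needed. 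Likewise, the overlap of $F(R_{00})$ with $F(R_{01})$ uses a vertical jump $(1-2\lambda)r$ against a horizontal span $\lambda r$, which also needs $\rho\geq(1-2\lambda)/\lambda$. The comparisons involving $R_{11}$ combine a step of length $(1-\lambda)r$ in one direction with $\lambda r$ in the other. I expect these to need $\rho\le \lambda/(1-2\lambda)$ or its reciprocal form, and possibly $\lambda\ge1/4$ to make the bound $1/(1-2\lambda)$ suffice. The two possible orderings of $F(R_{00})$ and $F(R_{11})$ can be handled by noting that it is enough for the union to be connected: each of these middle intervals must meet $F(R_{10})\cup F(R_{01})$, or they must meet each other.

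The main obstacle is that the ratio $\rho$ is evaluated at different points in different mean-value steps, so the inequalities are not exactly those of the linear case. I would first treat linear $F$, where the four conditions reduce to explicit inequalities in $\rho$ and $\lambda$ that \eqref{eq:Fuv} is designed to imply. For general $F$, I would write each difference as an integral of $\nabla F$ along a two-segment path. I would then compare the two endpoint values by a single path that goes through the relevant corner, so that only pointwise sign conditions are needed: $F_u+\rho_0F_v\le0$ wherever $\rho\ge\rho_0$, and similarly for the upper bound. This keeps every comparison pointwise. The path for each overlap must be chosen so that the horizontal and vertical contributions are integrated against a pointwise inequality of the form $\lambda F_v+(1-2\lambda)F_u\ge0$ or $(1-2\lambda)F_v+F_u\le0$ along a diagonal segment. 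Finding these paths is the key computation.
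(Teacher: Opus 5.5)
\textbf{There is a genuine gap: the overlap step, which is the whole content of the lemma, is wrong as you set it up.} The rest of your architecture is the paper's (four corner subsquares, image intervals covering $F(R)$, nested-square compactness), and the compactness part is fine. Your $F(R_{00})$--$F(R_{01})$ comparison is also correct. But your analysis of the $F(R_{10})$--$F(R_{00})$ overlap has the inequality backwards. Moving from $(a+\lambda r,b)$ to $(a+(1-\lambda)r,b+\lambda r)$ changes $F$ by $r\int_0^1[(1-2\lambda)F_u+\lambda F_v]\,ds$. This is $\ge0$ when $\rho\le\lambda/(1-2\lambda)$, not when $\rho\ge(1-2\lambda)/\lambda$.

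Since $\lambda/(1-2\lambda)<1/(1-2\lambda)$, the hypotheses never give this. The assumption $\lambda\ge1/4$ cannot rescue it either: its only role is to make the admissible range for $\rho$ nonempty, because $(1-2\lambda)/\lambda\le1/(1-2\lambda)$ iff $\lambda\ge1/4$. Concretely, take $\lambda=3/10$, $F(u,v)=v-\tfrac52u$ (so $\rho=1/(1-2\lambda)$ is allowed) and $R=[0,1]^2$. Then
\[
F(R_{10})=[-\tfrac52,-\tfrac{29}{20}],\quad F(R_{11})=[-\tfrac95,-\tfrac34],\quad F(R_{00})=[-\tfrac34,\tfrac{3}{10}],\quad F(R_{01})=[-\tfrac{1}{20},1].
\]
So $F(R_{10})\cap F(R_{00})=\varnothing$, and by the same obstruction $F(R_{11})\cap F(R_{01})=\varnothing$. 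Any chain using either link fails. Your fallback criterion (each middle interval meets $F(R_{10})\cup F(R_{01})$, or the two middle ones meet) also does not guarantee connectedness. What you need is a chain of successively overlapping intervals from $F(R_{10})$ to $F(R_{01})$.

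\textbf{The missing idea: use one fixed chain.} Take $F(R_{10}),F(R_{11}),F(R_{00}),F(R_{01})$ in that order, regardless of how the endpoints happen to be sorted; that sorting depends on whether $\rho$ exceeds $1$, which may vary over $R$.

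For the outer links, $F(R_{10})\ni\min_R F$ and $F(R_{01})\ni\max_R F$, so each needs only one inequality:
\[
F(a+r,b+(1-\lambda)r)\le F(a+(1-\lambda)r,b+\lambda r),\qquad F(a+\lambda r,b+(1-\lambda)r)\le F(a,b+\lambda r).
\]
Both follow by integrating $\lambda F_u+(1-2\lambda)F_v\le0$ (equivalently $\rho\ge(1-2\lambda)/\lambda$) along a segment with displacement $r(\lambda,1-2\lambda)$.

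The middle link $F(R_{11})\cap F(R_{00})$ needs two inequalities:
\[
F(a+r,b+(1-\lambda)r)\le F(a,b+\lambda r),\qquad F(a+\lambda r,b)\le F(a+(1-\lambda)r,b+r).
\]
The first comes from $F_u+(1-2\lambda)F_v\le0$ along $r(1,1-2\lambda)$. The second comes from $(1-2\lambda)F_u+F_v\ge0$ (equivalently $\rho\le1/(1-2\lambda)$) along $r(1-2\lambda,1)$. That second comparison is the only place the upper bound on $\rho$ is used. It is sharp: in the example above the two intervals just touch at $-\tfrac34$.

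Each comparison uses a single straight segment inside $R$ against a pointwise inequality. So the nonlinearity of $F$ causes no trouble, and no two-segment paths are needed.
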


\begin{proof}
Write $d_0=0$ and $d_1=1-\lambda$, and let
\[
 Q_{ij}=[p+d_i\ell,p+(d_i+\lambda)\ell]
 \times[q+d_j\ell,q+(d_j+\lambda)\ell],
 \qquad i,j\in\{0,1\}.
\]
Since $F_u<0<F_v$, the image of every subsquare
$[r,r+s]\times[t,t+s]\subset Q$ is the interval
\begin{equation*}
 F([r,r+s]\times[t,t+s])=[F(r+s,t),F(r,t+s)].
\end{equation*}
	Consequently,
\[
\begin{aligned}
	F(Q_{10})
	&=
	[F(p+\ell,q),
	F(p+(1-\lambda)\ell,q+\lambda\ell)],\\
	F(Q_{11})
	&=
	[F(p+\ell,q+(1-\lambda)\ell),
	F(p+(1-\lambda)\ell,q+\ell)],\\
	F(Q_{00})
	&=
	[F(p+\lambda\ell,q),
	F(p,q+\lambda\ell)],\\
	F(Q_{01})
	&=
	[F(p+\lambda\ell,q+(1-\lambda)\ell),
	F(p,q+\ell)].
\end{aligned}
\]

We compare the four intervals in the order $F(Q_{10})$, $F(Q_{11})$,  $F(Q_{00})$,  $F(Q_{01})$.
The assumption \eqref{eq:Fuv} implies, throughout
\(Q\),
\begin{equation}\label{eq:FuvInq}
\lambda F_u+(1-2\lambda)F_v\leq 0,\qquad
F_u+(1-2\lambda)F_v\leq 0,\qquad
(1-2\lambda)F_u+F_v\geq 0.
\end{equation}
Integrating the first inequality along segments parallel to
\((\lambda,1-2\lambda)\) gives
\[
F(p+\ell,q+(1-\lambda)\ell)
\leq
F(p+(1-\lambda)\ell,q+\lambda\ell)
\]
and
\[
F(p+\lambda\ell,q+(1-\lambda)\ell)
\leq
F(p,q+\lambda\ell).
\]
Hence
\[
F(Q_{10})\cap F(Q_{11})\neq\varnothing,
\qquad
F(Q_{00})\cap F(Q_{01})\neq\varnothing.
\]

For the middle two intervals, integrating the second inequality in \eqref{eq:FuvInq}
along the segment with displacement \(\ell(1,1-2\lambda)\) gives
\[
F(p+\ell,q+(1-\lambda)\ell)
\leq
F(p,q+\lambda\ell),
\]
while integrating the third inequality in \eqref{eq:FuvInq} along the segment with
displacement \(\ell(1-2\lambda,1)\) gives
\[
F(p+\lambda\ell,q)
\leq
F(p+(1-\lambda)\ell,q+\ell).
\]
Thus
\[
F(Q_{11})\cap F(Q_{00})\neq\varnothing.
\]
The four intervals therefore form an overlapping chain. Since their
union contains the endpoints \(F(p+\ell,q)\) and \(F(p,q+\ell)\)
of \(F(Q)\), it follows that
\begin{equation}\label{eq:FQFQij}
 F(Q)=\bigcup_{i,j\in\{0,1\}}F(Q_{ij}).
\end{equation}

The derivative assumptions in \eqref{eq:Fuv} remain valid on every subsquare in $Q$.  Let $\mathcal{K}_n$ be the level-$n$ approximation of $K_{\lambda}$, i.e., $\mathcal{K}_n:=\bigcup_{|w|=n}I_w$ and $\mathcal{K}_0:=[0,1]$.  Set
\[
 \mathcal{E}_n=(p+\ell \mathcal{K}_n)\times(q+\ell \mathcal{K}_n).
\]
Repeated application of \eqref{eq:FQFQij} gives
$F(Q)=F(\mathcal{E}_n)$ for every $n$.  Fix \(t\in F(Q)\).
Then
$\mathcal{E}_n\cap F^{-1}(\{t\})$ ($n\geq0$)
is a nested sequence of nonempty compact sets. Hence its
intersection is nonempty. Since
\[
\bigcap_{n\geq0}\mathcal{E}_n
=(p+\ell K_{\lambda})\times(q+\ell K_{\lambda}),
\]
we obtain
\[
t\in
F\bigl((p+\ell K_{\lambda})\times(q+\ell K_{\lambda})\bigr).
\]
The reverse inclusion is immediate. This proves the lemma.
\end{proof}

The next lemma gives a sufficient condition for the intersection $E_{\lambda}$ to be infinite. 

\begin{lem}\label{lem:infinity-criterion}
Let $1/4<\lambda<1/2$. 
Let $k\in \N$ and  $w,z\in\{0,1\}^k$. Write $I_w=[L_w,U_w]$ and $I_z=[L_z,U_z]$. If
\begin{equation}\label{eq:LwUw}
 L_w>1-2\lambda,
 \quad (1-2\lambda)U_w<\lambda,
 \quad U_w^2\geq2\lambda L_z,
 \quad L_w^2<2\lambda U_z,
\end{equation}
then $E_\lambda$ contains a sequence of pairwise distinct points
converging to $(0,1)$. In particular, $E_\lambda$ is infinite.
\end{lem}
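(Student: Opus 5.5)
The plan is to look for points of $E_\lambda$ of the form
\[
(x,y)=\bigl(\lambda^n u,\;1-\lambda^{2n+1}t\bigr),\qquad u\in K_w,\ t\in K_z,
\]
for every sufficiently large $n$. Both coordinates then lie in $K_\lambda$. Indeed, $\lambda^n u=f_{0^n}(u)\in K_\lambda$. Also $K_\lambda=1-K_\lambda$, so $1-t\in K_\lambda$, and hence $1-\lambda^{2n+1}t=f_{1^{2n+1}}(1-t)\in K_\lambda$. The only remaining condition is that $(x,y)$ lie on $S^1$, that is, $1-\lambda^{2n+1}t=\sqrt{1-\lambda^{2n}u^2}$.

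To encode this condition, I will define
\[
F_n(u,t):=t-\lambda^{-(2n+1)}\Bigl(1-\sqrt{1-\lambda^{2n}u^2}\Bigr),
\]
which is $C^1$ near $Q:=I_w\times I_z$ because $\lambda^{2n}u^2<1$ there. Since $I_w$ and $I_z$ both have length $\ell=\lambda^k$, $Q$ is a square with $p=L_w$ and $q=L_z$. Moreover $p+\ell K_\lambda=K_w$ and $q+\ell K_\lambda=K_z$. As $n\to\infty$, $F_n\to F_\infty(u,t):=t-u^2/(2\lambda)$ in $C^1$ on a neighborhood of $Q$. We have $\partial_tF_\infty=1$ and $-\partial_uF_\infty/\partial_tF_\infty=u/\lambda$. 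The first two hypotheses in \eqref{eq:LwUw} give, for $u\in I_w$,
\[
\frac{1-2\lambda}{\lambda}<\frac{L_w}{\lambda}\le \frac u\lambda\le\frac{U_w}{\lambda}<\frac1{1-2\lambda}.
\]
These inequalities are strict and $Q$ is compact, so by $C^1$ convergence the slope conditions \eqref{eq:Fuv} hold for $F_n$ on $Q$ once $n$ is large. Lemma~\ref{lem:filling} then gives $F_n(K_w\times K_z)=F_n(Q)=[F_n(U_w,L_z),F_n(L_w,U_z)]$.

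The main point, which I expect to be the only delicate step, is to check that $0\in F_n(Q)$. The hypothesis $U_w^2\ge2\lambda L_z$ is non-strict, so a pure limiting argument will not suffice there. Instead I will use the elementary inequality $1-\sqrt{1-s}\ge s/2$ for $s\in[0,1]$. It gives $F_n\le F_\infty$ pointwise, so
\[
F_n(U_w,L_z)\le L_z-\frac{U_w^2}{2\lambda}\le0.
\]
For the other endpoint, the strict hypothesis $L_w^2<2\lambda U_z$ gives $F_\infty(L_w,U_z)>0$, and therefore $F_n(L_w,U_z)>0$ for all large $n$ by convergence. Hence for each large $n$ there is some $(u_n,t_n)\in K_w\times K_z$ with $F_n(u_n,t_n)=0$, and this produces the point $P_n=(\lambda^nu_n,\,1-\lambda^{2n+1}t_n)\in E_\lambda$.

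Finally, $u_n\in[L_w,U_w]$ with $L_w>1-2\lambda>0$, so $x_n=\lambda^nu_n$ is positive and tends to $0$. Therefore $P_n\to(0,1)$, and the set $\{P_n\}$ is infinite. Passing to a subsequence along which the $x_n$ are strictly decreasing yields pairwise distinct points converging to $(0,1)$, and in particular $E_\lambda$ is infinite.
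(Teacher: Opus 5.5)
Your proposal is correct and follows essentially the same route as the paper. Both arguments use the ansatz $(\lambda^n u,\,1-\lambda^{2n+1}v)$ with $u\in K_w$, $v\in K_z$ and apply Lemma~\ref{lem:filling}: the first two hypotheses give the slope conditions, and the last two give the sign change. The only difference is cosmetic. The paper uses the polynomial $H_\varepsilon=\lambda v-u^2/2-\varepsilon v^2/2$ with an explicit threshold $\varepsilon_0$, while you use the square-root form and a $C^1$-limit argument. Your inequality $1-\sqrt{1-s}\ge s/2$ plays the role of the paper's extra term $-\varepsilon v^2/2$ in handling the non-strict hypothesis $U_w^2\geq 2\lambda L_z$.
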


\begin{proof}
Put $u(\xi):=L_w+\lambda^k\xi$ and
$v(\eta):=L_z+\lambda^k\eta$.
Then $u(K_\lambda)=K_w$ and $v(K_\lambda)=K_z$.
For $\varepsilon>0$, define
\[
H_{\varepsilon}(\xi,\eta)
:=\lambda v(\eta)-\frac{u(\xi)^2}{2}-\frac{\varepsilon v(\eta)^2}{2}.
\] The auxiliary function $H_{\varepsilon}$ is chosen so that its zeros
produce points on the unit circle. More precisely, if
$\varepsilon=\lambda^{2n+2}$
and $
H_{\varepsilon}(\xi,\eta)=0$,
then, writing \(u=u(\xi)\) and \(v=v(\eta)\), one has
\[
(\lambda^n u)^2+
(1-\lambda^{2n+1}v)^2=1.
\]
We therefore first solve \(H_{\varepsilon}=0\) for every sufficiently small \(\varepsilon>0\).

The fourth condition in \eqref{eq:LwUw} implies $U_z>0$.  Define
\begin{equation}\label{eq:epsilon0}
 \varepsilon_0:=\frac{1}{2}\min\left\{
 \frac{\lambda-(1-2\lambda)U_w}{U_z},
 \frac{2\lambda U_z-L_w^2}{U_z^2}
 \right\}>0.
\end{equation}
Fix $0<\varepsilon\leq\varepsilon_0$. Throughout $[0,1]^2$, $L_w\leq u\leq U_w$, $0\leq v\leq U_z$, and  
\[\lambda-\varepsilon v\geq \lambda-\varepsilon U_z\geq \frac{\lambda+(1-2\lambda)U_w}{2}>(1-2\lambda) U_w>0.\]
Hence $(H_{\varepsilon})_{\xi}=-\lambda^ku<0$ and 
$(H_{\varepsilon})_{\eta}=\lambda^k(\lambda-\varepsilon v)>0$.
Moreover,
\[
 \frac{(1-2\lambda)}{\lambda}<\frac{L_w}{\lambda}\leq -\frac{(H_{\varepsilon})_{\xi}}{(H_{\varepsilon})_{\eta}}
 =\frac{u}{\lambda-\varepsilon v}
 \leq\frac{U_w}{\lambda-\varepsilon U_z}<\frac{1}{1-2\lambda}.
\]
Thus Lemma~\ref{lem:filling} applies to $H_{\varepsilon}$.

The function $H_{\varepsilon}$ decreases in $\xi$ and increases in $\eta$.  By the third condition in \eqref{eq:LwUw},
\[
 H_{\varepsilon}(1,0)
 =\lambda L_z-\frac{U_w^2}{2}-\frac{\varepsilon L_z^2}{2}<0.
\]
Indeed, this clearly holds when $U_w^2>2\lambda L_z$. 
If $U_w^2=2\lambda L_z$, then
$L_z=U_w^2/(2\lambda)>0$ since
$U_w\geq L_w>1-2\lambda>0$, so the last term still gives strict negativity. Also, by \eqref{eq:epsilon0},
\[
 H_{\varepsilon}(0,1)
 =\lambda U_z-\frac{L_w^2}{2}-\frac{\varepsilon U_z^2}{2}>0.
\]
Lemma~\ref{lem:filling} therefore gives 
\[H_{\varepsilon}(K_{\lambda}\times K_{\lambda})=H_{\varepsilon}([0,1]^2)=[H_{\varepsilon}(1,0), H_{\varepsilon}(0,1)]\ni0.\]
Hence there exist \(\xi,\eta\in K_{\lambda}\) such that
\(H_{\varepsilon}(\xi,\eta)=0\).

Choose \(n_0\in\N\) such that $\lambda^{2n+2}\leq\varepsilon_0$ for all $n\geq n_0$.
For each \(n\geq n_0\), choose \((\xi_n,\eta_n)\in K_{\lambda}\times K_{\lambda}\) satisfying
$H_{\lambda^{2n+2}}(\xi_n,\eta_n)=0$.
Set
\[
u_n:=u(\xi_n),
\qquad
v_n:=v(\eta_n),
\qquad
x_n:=\lambda^n u_n,
\qquad
y_n:=1-\lambda^{2n+1}v_n.
\]
Since $u_n,v_n\in K_{\lambda}$, $\lambda^m K_{\lambda}\subset K_{\lambda}$ for all $m\in\N$, and $1-K_{\lambda}=K_{\lambda}$,  
we have $x_n,y_n\in K_{\lambda}$. Furthermore,
\[
 x_n^2+y_n^2
 =1+\lambda^{2n}
 \bigl(u_n^2-2\lambda v_n+\lambda^{2n+2}v_n^2\bigr)=1.
\]
Thus 
$(x_n,y_n)\in E_\lambda$ for all $n\geq n_0$. Since
\[
0<\lambda^nL_w\leq x_n\leq\lambda^nU_w\longrightarrow0
\]
and
\[
0\leq1-y_n
=\lambda^{2n+1}v_n
\leq\lambda^{2n+1}U_z\longrightarrow0,
\]
we have $(x_n,y_n)\to(0,1)$.
Since $x_n>0$ and $x_n\to0$, we can pass to a subsequence
whose first coordinates are strictly decreasing.
This gives a sequence of pairwise distinct points of
$E_\lambda$ converging to $(0,1)$.
In particular, $E_\lambda$ is infinite.
\end{proof}

\begin{proof}[Proof of Theorem~\ref{thm:intersection}(i)]
	Apply Lemma~\ref{lem:infinity-criterion} with $w=z=10$.
	The corresponding cylinder interval is $I_{10}=[L,R]$, where 
	\[L=1-\lambda, \qquad R=1-\lambda+\lambda^2.\]
	The first condition in \eqref{eq:LwUw} holds because $
	L-(1-2\lambda)=\lambda>0$.
	Moreover,
	\[
	\lambda-(1-2\lambda)R
	=
	2\lambda R-L^2
	=
	P(\lambda),
	\]
where $P(x):=2x^3-3x^2+4x-1$.
Since
	$P'(x)
	=
	6\left(x-\frac{1}{2}\right)^2+\frac{5}{2}>0$, $P$ is strictly increasing. Furthermore, $
	P\left(\frac{1}{4}\right)<0<P\left(\frac{1}{2}\right)$. Hence $P$ has a unique root $\lambda_{\infty}$ in $(1/4,1/2)$, and $P(\lambda)>0$ for $\lambda_\infty<\lambda<1/2$. Therefore,
	the second and fourth conditions in \eqref{eq:LwUw} hold
	when $\lambda>\lambda_{\infty}$.
	
	Finally, since $
	R^2-2\lambda L
	=
	(1-2\lambda)^2
	+
	\lambda^2(1-\lambda)^2
	>0$, the third condition also holds. Thus all four
	conditions in \eqref{eq:LwUw} are
	satisfied. 
	Lemma~\ref{lem:infinity-criterion} gives a sequence of pairwise
	distinct points of $E_\lambda$ converging to $(0,1)$.
	By invariance of $E_{\lambda}$ under coordinate interchange, there is also
	such a sequence converging to $(1,0)$.
\end{proof}

\section{Continuum cardinality of the circle intersection}
\label{sec:continuum}

In this section we prove Theorem~\ref{thm:intersection}(ii).
We first establish a double-covering criterion for pairs of
cylinder intervals, possibly belonging to different levels. We
then verify this criterion throughout the range
\(\lambda_*<\lambda<1/2\). Finally, we show that
\(\lambda_*\) is the optimal lower endpoint for the present
criterion.

\subsection{A double-covering criterion for cylinder pairs}

Let $0<\lambda<1/2$ be fixed.
Let 
\[I=[a,a+L], \qquad J=[b,b+M]\]
be two cylinder intervals, where $L=\lambda^n$ and $M=\lambda^m$ for possibly distinct $m,n\in\N$. Their first-generation children are
\begin{align*}
I_0&=[a,a+\lambda L],&
I_1&=[a+(1-\lambda)L,a+L],\\
J_0&=[b,b+\lambda M],&
J_1&=[b+(1-\lambda)M,b+M].
\end{align*}
Define $g(x,y):=x^2+y^2$. For $i,j\in\{0,1\}$, write $G_{ij}:=g(I_i\times J_j)=[\ell_{ij},u_{ij}]$. 
Explicitly,
\begin{align*}
	\ell_{00}&=a^2+b^2,
	&u_{00}&=(a+\lambda L)^2+(b+\lambda M)^2,\\
	\ell_{10}&=(a+(1-\lambda)L)^2+b^2,
	&u_{10}&=(a+L)^2+(b+\lambda M)^2,\\
	\ell_{01}&=a^2+(b+(1-\lambda)M)^2,
	&u_{01}&=(a+\lambda L)^2+(b+M)^2,\\
	\ell_{11}&=(a+(1-\lambda)L)^2+(b+(1-\lambda)M)^2,
	&u_{11}&=(a+L)^2+(b+M)^2.
\end{align*}

Set
\begin{equation*}
c_\lambda:=1-\lambda-\lambda^2,
\qquad
d_\lambda:=1-2\lambda.
\end{equation*}
Then \(c_\lambda,d_\lambda>0\) for
\(0<\lambda<1/2\).
For a cylinder pair $(I,J)$, we introduce the five quantities
\begin{equation}\label{eq:defOABCD}
\begin{aligned}
O(I,J)&:=bM-(a+L)L,\\
A(I,J)&:=\lambda bM-c_\lambda(a+L)L,\\
B(I,J)&:=\lambda aL-d_\lambda(b+M)M,\\
C(I,J)&:=1-a^2-b^2,\\
D(I,J)&:=(a+L)^2+(b+M)^2-1.
\end{aligned}
\end{equation}
The roles of these quantities are as follows. The condition
\(O\geq0\) fixes the order of the four first-generation image
intervals, while \(A\geq0\) and \(B\geq0\) guarantee their overlaps. Since $g$ is coordinatewise increasing on $[0,1]^2$, the conditions $C(I,J)>0$ and $D(I,J)>0$ are equivalent to
\begin{equation}\label{eq:1intgIJ}
1\in\interior g(I\times J)
=\bigl(a^2+b^2,(a+L)^2+(b+M)^2\bigr).
\end{equation}

Write \(I=I_u\) and \(J=I_v\), where $u\in \{0,1\}^n$ and  $v\in \{0,1\}^m$. For \(q\in \N_0\), we say that
\((I',J')\) is a \emph{relative level-\(q\) descendant pair} of \((I,J)\)
if $I'=I_{u\alpha}$ and $J'=I_{v\beta}$
for some \(\alpha,\beta\in\{0,1\}^q\). The associated
rectangle \(I'\times J'\) will be called a \emph{relative level-\(q\) descendant rectangle} of \(I\times J\).

Let \(\mathcal G(I,J)\) be the set of all
\(z\in g(I\times J)\) for which, for some \(q\geq1\), there exist
two distinct relative level-\(q\) descendant rectangles
\(R_1,R_2\) of \(I\times J\) such that
\[
z\in\interior g(R_1)
\cap\interior g(R_2).
\]
Whenever \((I',J')\) is a relative level-\(q\)
descendant pair of \((I,J)\), one has
\begin{equation}\label{eq:GIJinc}
	\mathcal G(I',J')\subset\mathcal G(I,J).
\end{equation}
Indeed, a relative level-\(k\) descendant pair of
\((I',J')\) is a relative level-\((q+k)\)
descendant pair of \((I,J)\). 

We first show that the conditions $O,A,B\geq0$ are preserved
when both cylinder intervals are replaced by descendants
of the same relative depth.

\begin{lem}\label{lem:weighted-inheritance}
Suppose that $I'=[a',a'+L']\subset I$ and $J'=[b',b'+M']\subset J$ form a  relative level-\(q\) descendant  pair, so that $
L'=\lambda^qL$,
$M'=\lambda^qM$.
If $O(I,J),A(I,J),B(I,J)\geq0$, then
$O(I',J'),A(I',J'),B(I',J')\geq0$.
\end{lem}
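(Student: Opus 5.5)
By induction on $q$, it suffices to treat $q=1$: a relative level-$q$ descendant pair is obtained from $(I,J)$ by $q$ successive passages to first-generation children, taken simultaneously in both coordinates. So I assume $I'=I_i$ and $J'=J_j$ with $i,j\in\{0,1\}$. Then $L'=\lambda L$, $M'=\lambda M$, and
\[
a'=a+\epsilon_i L,\qquad b'=b+\epsilon_j M,\qquad \epsilon_0=0,\ \epsilon_1=1-\lambda.
\]
Every quantity in \eqref{eq:defOABCD} that is needed has the form (coefficient)$\cdot$(position)$\cdot$(length). I will substitute and compare each new quantity with a nonnegative combination of $O,A,B$ at $(I,J)$.

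\emph{The quantity $O$.} We have
\[
O(I',J')=\lambda b'M-\lambda(a'+\lambda L)L .
\]
The worst case is $b'$ minimal and $a'$ maximal, i.e.\ $j=0$ and $i=1$. Then $a'+\lambda L=a+L$, so
\[
O(I',J')\ge \lambda\bigl(bM-(a+L)L\bigr)=\lambda O(I,J)\ge 0 .
\]
In the other cases $b'M\ge bM$ and $(a'+\lambda L)L\le (a+L)L$, so the same bound holds.

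\emph{The quantity $A$.} We have
\[
A(I',J')=\lambda^2 b'M-\lambda c_\lambda(a'+\lambda L)L .
\]
The same monotonicity applies: $b'\ge b$ and $a'+\lambda L\le a+L$. Hence
\[
A(I',J')\ge \lambda\bigl(\lambda bM-c_\lambda(a+L)L\bigr)=\lambda A(I,J)\ge 0 .
\]

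\emph{The quantity $B$, which is the main obstacle.} We have
\[
B(I',J')=\lambda^2 a'L-\lambda d_\lambda(b'+\lambda M)M .
\]
The bad case is $i=0$ and $j=1$, where $a'=a$ and $b'+\lambda M=b+M$. Then
\[
B(I',J')=\lambda\bigl(\lambda aL-d_\lambda(b+M)M\bigr)=\lambda B(I,J)\ge 0 .
\]
Again the general case is monotone: $a'\ge a$ and $b'+\lambda M\le b+M$. So this step is in fact also direct.

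The only genuine care needed is to verify that the extremal child choices are exactly those above. The coefficients $\lambda$, $c_\lambda$, $d_\lambda$ are positive, and all positions and lengths are nonnegative. Each quantity is therefore increasing in one position variable and decreasing in the other endpoint variable, with the length factors $L,M$ unchanged because they sit outside the rescaling. This gives the scaling identity
\[
X(I',J')\ge\lambda X(I,J)\quad\text{for } X\in\{O,A,B\},
\]
and iterating it yields $X(I',J')\ge\lambda^q X(I,J)\ge 0$ for the level-$q$ pair. I would therefore write the proof as this single monotonicity-plus-scaling observation, with the $q=1$ computation as the core step.
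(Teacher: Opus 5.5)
Your proof is correct and is essentially the paper's argument: the paper factors out $s=\lambda^q$ and uses only the containments $a\le a'$, $a'+L'\le a+L$, $b\le b'$, $b'+M'\le b+M$ to get $X(I',J')\ge sX(I,J)$ for $X\in\{O,A,B\}$. That monotonicity argument works for arbitrary $q$ in a single step, so your induction down to $q=1$ and your case analysis over the children are unnecessary but harmless.
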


\begin{proof}
Let $s=\lambda^q$. Using
\[
a\leq a',\quad a'+L'\leq a+L,
\qquad
b\leq b',\quad b'+M'\leq b+M,
\]
we obtain
\begin{align*}
O(I',J')
&=s\bigl[b'M-(a'+L')L\bigr]
\geq sO(I,J),\\
A(I',J')
&=s\bigl[\lambda b'M-c_\lambda(a'+L')L\bigr]
\geq sA(I,J),\\
B(I',J')
&=s\bigl[\lambda a'L-d_\lambda(b'+M')M\bigr]
\geq sB(I,J).
\end{align*}
The conclusion follows.
\end{proof}

\begin{lem}\label{lem:one-step-double}
If $O(I,J), B(I,J)\geq0$, then the four intervals $G_{00},G_{10},G_{01},G_{11}$ cover $g(I\times J)$ without gaps, and
$(\ell_{10},u_{01})\subset\mathcal{G}(I,J)$.
\end{lem}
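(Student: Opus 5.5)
The plan is to pin down the relative order of the eight endpoints $\ell_{ij},u_{ij}$ using only $O\geq0$ and $B\geq0$. Both claims then follow by inspecting the resulting chain, with $q=1$ in the definition of $\mathcal G(I,J)$: the four rectangles $I_i\times J_j$ are distinct relative level-$1$ descendant rectangles. Since $g$ is coordinatewise increasing on the relevant region, $G_{ij}=[\ell_{ij},u_{ij}]$ with the explicit formulas above, and $\interior G_{ij}=(\ell_{ij},u_{ij})$ because each $G_{ij}$ is nondegenerate. Every comparison is a difference of squares of the form $(x+sL)^2-(x+tL)^2=(s-t)L(2x+(s+t)L)$, so it reduces to an inequality between expressions such as $L(2a+\cdot\,L)$ and $M(2b+\cdot\,M)$.

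\emph{Step 1 (ordering from $O$).} Write $O\geq0$ as $bM\geq aL+L^2$. This gives $M(2b+\theta M)>L(2a+\theta L)$ for every $\theta\in[0,2]$. Applying it with $\theta=1-\lambda$ and $\theta=1+\lambda$ yields
\[
\ell_{10}<\ell_{01},\qquad u_{10}<u_{01}.
\]
The trivial monotone relations $\ell_{00}<\ell_{10}$, $\ell_{01}<\ell_{11}$, $u_{00}<u_{10}$ and $u_{01}<u_{11}$ hold as well.

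\emph{Step 2 (overlaps from $B$).} From $B\geq0$ and $b+M\geq(2b+M)/2$ I get
\[
d_\lambda M(2b+M)\leq 2d_\lambda(b+M)M\leq 2\lambda aL.
\]
Hence
\[
u_{00}-\ell_{01}=\lambda L(2a+\lambda L)-d_\lambda M(2b+M)\geq \lambda^2L^2>0,
\]
and similarly
\[
u_{10}-\ell_{11}=\lambda L\bigl(2a+(2-\lambda)L\bigr)-d_\lambda M(2b+M)>0.
\]
I will also record $u_{10}>\ell_{01}$, which is immediate from $u_{10}>u_{00}$. Putting Steps 1 and 2 together gives the chain
\[
\ell_{00}<\ell_{10}<\ell_{01}<u_{00}<u_{10}<u_{01}<u_{11},\qquad \ell_{11}<u_{10}.
\]
Consecutive intervals in the order $G_{00},G_{10},G_{01},G_{11}$ therefore overlap. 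Their union is $[\ell_{00},u_{11}]=g(I\times J)$, which is the no-gap covering claim.

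\emph{Step 3 (double coverage).} Let $z\in(\ell_{10},u_{01})$ and split into three cases.
\begin{itemize}
\item If $z<u_{00}$, then $z\in\interior G_{00}\cap\interior G_{10}$.
\item If $u_{00}\leq z<u_{10}$, then $z>\ell_{01}$ and $z<u_{10}<u_{01}$, so $z\in\interior G_{10}\cap\interior G_{01}$.
\item If $u_{10}\leq z<u_{01}$, then $z>\ell_{11}$ (since $\ell_{11}<u_{10}$) and $z<u_{01}<u_{11}$, so $z\in\interior G_{01}\cap\interior G_{11}$.
\end{itemize}
In every case $z$ lies in the interiors of the images of two distinct level-$1$ descendant rectangles, so $z\in\mathcal G(I,J)$.

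The main obstacle is Step 2, specifically showing that $G_{00}$ reaches past $\ell_{01}$. I initially expected only the overlap $G_{10}\cap G_{01}$ to be needed, but the double-coverage argument requires $u_{00}\geq\ell_{01}$. The trick there is to bound $d_\lambda M(2b+M)$ by $2\lambda aL$ through $B$. The strictness of every inequality, which is needed for the open-interior conditions, comes from the extra positive terms $\lambda^2L^2$ and $\lambda(2-\lambda)L^2$.
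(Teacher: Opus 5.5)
Your proof is correct and follows essentially the paper's route. You use $O\geq0$ to order the lower and upper endpoints and $B\geq0$ to get $u_{00}>\ell_{01}$ and $u_{10}>\ell_{11}$; your three-case split is the paper's observation that $(\ell_{10},u_{00})$, $(\ell_{01},u_{10})$, $(\ell_{11},u_{01})$ overlap successively and fill $(\ell_{10},u_{01})$, with only cosmetic differences such as your slightly weaker but sufficient bound $u_{00}-\ell_{01}\geq\lambda^2L^2$ in place of the paper's $\lambda^2L^2+d_\lambda M^2$.
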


\begin{proof}
The condition $O(I,J)\geq0$ gives $bM\geq(a+L)L$. Hence
\begin{align*}
\ell_{01}-\ell_{10}
&=(1-\lambda)\bigl[2bM+(1-\lambda)M^2-2aL-(1-\lambda)L^2\bigr]\\
&\geq(1-\lambda)\bigl[(1+\lambda)L^2+(1-\lambda)M^2\bigr]>0,
\end{align*}
and similarly
\begin{align*}
u_{01}-u_{10}
&=(1-\lambda)\bigl[2(bM-aL)+(1+\lambda)(M^2-L^2)\bigr]\\
&\geq(1-\lambda)\bigl[(1-\lambda)L^2+(1+\lambda)M^2\bigr]>0.
\end{align*}
It follows that $\ell_{00}<\ell_{10}<\ell_{01}<\ell_{11}$ and $u_{00}<u_{10}<u_{01}<u_{11}$.

From $B(I,J)\geq0$,
\begin{align*}
u_{00}-\ell_{01}
&=2\lambda aL-2d_\lambda bM+\lambda^2L^2-d_\lambda M^2\\
&\geq\lambda^2L^2+d_\lambda M^2>0,
\end{align*}
and
\[
u_{10}-\ell_{11}=(u_{00}-\ell_{01})+2\lambda(1-\lambda)L^2>0.
\]
Therefore, the intervals
\(G_{00},G_{10},G_{01},G_{11}\), in this order, form an
overlapping chain from $\ell_{00}$ to $u_{11}$, and hence
\[
G_{00}\cup G_{10}\cup G_{01}\cup G_{11}
=
[\ell_{00},u_{11}]
=
g(I\times J).
\] 
Moreover,
\begin{align*}
(\ell_{10},u_{00})&=\interior G_{00}\cap\interior G_{10},\\
(\ell_{01},u_{10})&=\interior G_{10}\cap\interior G_{01},\\ 
(\ell_{11},u_{01})&=\interior G_{01}\cap \interior G_{11}.
\end{align*}
Since $u_{00}>\ell_{01}$ and $u_{10}>\ell_{11}$, these three open intervals overlap successively and their union is $(\ell_{10},u_{01})$. Therefore $(\ell_{10},u_{01})\subset\mathcal{G}(I,J)$.
\end{proof}

For $q\geq 0$, put $t_q:=\lambda^q$. The relative level-\(q\) descendants which keep the left endpoints fixed are
\[
I_q^-=[a,a+t_qL],
\qquad
J_q^-=[b,b+t_qM].
\]
The central double-covering interval obtained from Lemma~\ref{lem:one-step-double} is $(\alpha_q,\beta_q)$, where
\begin{align*}
\alpha_q:=\bigl(a+(1-\lambda)t_qL\bigr)^2+b^2,\quad \beta_q:=\bigl(a+\lambda t_qL\bigr)^2+\bigl(b+t_qM\bigr)^2.
\end{align*}
The intervals \((\alpha_q,\beta_q)\) approach the left endpoint
of the parent image. The next lemma shows that they overlap
successively and fill $(a^2+b^2,\beta_0)$.

\begin{lem}\label{lem:left-chain}
If $O(I,J),A(I,J),B(I,J)\geq0$, then
$(a^2+b^2,\beta_0)\subset\mathcal{G}(I,J)$.
\end{lem}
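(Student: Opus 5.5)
The plan is to apply Lemma~\ref{lem:one-step-double} to each pair $(I_q^-,J_q^-)$ and chain the resulting intervals. First, $(I_q^-,J_q^-)$ is a relative level-$q$ descendant pair of $(I,J)$, obtained by appending $0^q$ to both words. Lemma~\ref{lem:weighted-inheritance} therefore gives $O,A,B\geq0$ for this pair. Lemma~\ref{lem:one-step-double} then yields $(\alpha_q,\beta_q)\subset\mathcal G(I_q^-,J_q^-)$, where $\alpha_q,\beta_q$ are exactly the quantities $\ell_{10},u_{01}$ computed for $(I_q^-,J_q^-)$. By \eqref{eq:GIJinc}, $(\alpha_q,\beta_q)\subset\mathcal G(I,J)$ for every $q\geq0$.

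Next, $\alpha_q\downarrow a^2+b^2$ and $\beta_q\downarrow a^2+b^2$ as $q\to\infty$, since $t_q\to0$. It therefore suffices to show that consecutive intervals overlap, i.e.\ $\alpha_q<\beta_{q+1}$ for all $q\geq0$. Given this, $\bigcup_{q\geq0}(\alpha_q,\beta_q)$ is connected. Its infimum is $a^2+b^2$, not attained, and its supremum is $\beta_0$ (the $\beta_q$ decrease). Hence it equals $(a^2+b^2,\beta_0)$, which proves the lemma.

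The overlap inequality is the main step and should use $A\geq0$, the one hypothesis not yet used. Put $s=t_q$, so $t_{q+1}=\lambda s$. Then
\[
\beta_{q+1}-\alpha_q
=2\lambda^2 saL+\lambda^4s^2L^2+2\lambda sbM+\lambda^2s^2M^2-2(1-\lambda)saL-(1-\lambda)^2s^2L^2 .
\]
Dividing by $s$ and discarding the positive terms $2\lambda^2aL+\lambda^4sL^2+\lambda^2sM^2$, it suffices to show
\[
2\lambda bM \geq 2(1-\lambda)aL+(1-\lambda)^2sL^2 .
\]
Since $s\leq1$, it is enough that $\lambda bM\geq(1-\lambda)aL+\tfrac12(1-\lambda)^2L^2$.

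From $A\geq0$ we have $\lambda bM\geq c_\lambda(a+L)L=(1-\lambda-\lambda^2)aL+(1-\lambda-\lambda^2)L^2$. This beats $(1-\lambda)aL$ only by $-\lambda^2aL$, so the inequality is not immediate, and the argument must keep the discarded $2\lambda^2aL$ term. Including it, the requirement becomes
\[
\lambda bM\geq(1-\lambda-\lambda^2)aL+\tfrac12\bigl((1-\lambda)^2s-\lambda^4s\bigr)L^2 ,
\]
which $A\geq0$ gives as soon as $1-\lambda-\lambda^2\geq\tfrac12(1-\lambda)^2$. That holds for $\lambda<1/2$: at $\lambda=1/2$ the two sides are $1/4$ and $1/8$, and the difference is decreasing on $[0,1/2]$. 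This bookkeeping of constants, keeping the right positive terms, is the only delicate part. The strict inequality follows because $L>0$ and $\lambda^2sM^2>0$.
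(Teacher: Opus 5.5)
Your proof is correct and follows the paper's argument: you apply Lemma~\ref{lem:weighted-inheritance} and Lemma~\ref{lem:one-step-double} to $(I_q^-,J_q^-)$, transfer via \eqref{eq:GIJinc}, and verify $\beta_{q+1}>\alpha_q$ from $A(I,J)\geq0$, where your condition $c_\lambda\geq\frac12(1-\lambda)^2$ is exactly the paper's $1-3\lambda^2>0$. The only blemish is expository: your first reduction, which discards $2\lambda^2 aL$, is a dead end that you later retract, so you should present the final grouping $2s(\lambda bM-c_\lambda aL)+s^2\bigl[(\lambda^4-(1-\lambda)^2)L^2+\lambda^2M^2\bigr]$ directly.
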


\begin{proof}
	By Lemmas~\ref{lem:weighted-inheritance}
	and~\ref{lem:one-step-double}, together with \eqref{eq:GIJinc},
	we have $(\alpha_q,\beta_q)\subset\mathcal G(I,J)$
	for every $q\in\N_0$.
	Since $A(I,J)\geq0$,
	we have $\lambda bM-c_\lambda aL\geq c_\lambda L^2$.
	Using $0<t_q\leq1$, we obtain
\begin{align*}		\beta_{q+1}-\alpha_q
			&=2t_q(\lambda bM-c_\lambda aL)
			+t_q^2\bigl[(\lambda^4-(1-\lambda)^2)L^2+\lambda^2M^2\bigr]\\
			&\geq t_q\bigl[2c_\lambda-t_q(1-\lambda)^2\bigr]L^2\\
			&\geq t_q(1-3\lambda^2)L^2>0.
		\end{align*}
	Thus consecutive intervals $(\alpha_q,\beta_q)$ overlap.
	Since $\alpha_q,\beta_q\downarrow a^2+b^2$, the union
	of these intervals equals $(a^2+b^2,\beta_0)$, proving the claim.
\end{proof}

The relative level-\(q\) descendants which keep the right endpoints fixed are
\begin{equation*}
I_q^+=[a_q,a+L],
\qquad
J_q^+=[b_q,b+M],
\end{equation*}
where \(a_q:=a+L-t_qL\) and \(b_q:=b+M-t_qM\). 
Set
\begin{equation*}
s_q:=a_q^2+b_q^2,
\qquad
\gamma_q:=\bigl(a_q+\lambda t_qL\bigr)^2+\bigl(b+M\bigr)^2.
\end{equation*}
The next lemma shows that the intervals $(s_q,\gamma_q)$
overlap successively and fill the interior of $g(I\times J)$.

\begin{lem}\label{lem:right-chain}
If $O(I,J),A(I,J),B(I,J)\geq0$, then
\[
\bigl(a^2+b^2,(a+L)^2+(b+M)^2\bigr)
\subset\mathcal{G}(I,J).
\]
\end{lem}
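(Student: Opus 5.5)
The plan is to cover the interior of $g(I\times J)$ by two chains of double-covering intervals. The left part is already supplied by Lemma~\ref{lem:left-chain}, and I will obtain the right part by applying that same lemma to the right-endpoint descendants $(I_q^+,J_q^+)$. Throughout, write $X:=a+L$ and $Y:=b+M$.

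First I will show $(s_q,\gamma_q)\subset\mathcal G(I,J)$ for every $q\in\N_0$. The pair $(I_q^+,J_q^+)$ is a relative level-$q$ descendant pair of $(I,J)$, so Lemma~\ref{lem:weighted-inheritance} gives $O,A,B\geq0$ for it. Lemma~\ref{lem:left-chain}, applied with $(I,J)$ replaced by $(I_q^+,J_q^+)$, then gives
\[
\bigl(a_q^2+b_q^2,\ (a_q+\lambda t_qL)^2+(b_q+t_qM)^2\bigr)\subset\mathcal G(I_q^+,J_q^+).
\]
Since $b_q+t_qM=Y$, this interval is exactly $(s_q,\gamma_q)$. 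By \eqref{eq:GIJinc} it lies in $\mathcal G(I,J)$.

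Next I will check that consecutive intervals overlap, i.e.\ $s_{q+1}<\gamma_q$. With $t=t_q$ one has $s_{q+1}=(X-\lambda tL)^2+(Y-\lambda tM)^2$ and $\gamma_q=(X-(1-\lambda)tL)^2+Y^2$. Hence
\[
\gamma_q-s_{q+1}=\lambda tM(2Y-\lambda tM)-(1-2\lambda)tL(2X-tL).
\]
The subtracted term is at most $2(1-2\lambda)tXL$. For the first term, $2Y-\lambda tM\geq 2b+(2-\lambda)M>2b$, so it exceeds $2\lambda tbM$. The condition $A(I,J)\geq0$ gives $\lambda bM\geq c_\lambda XL$. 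Since $c_\lambda-d_\lambda=\lambda-\lambda^2>0$, we get $\gamma_q-s_{q+1}>2t(c_\lambda-d_\lambda)XL\geq0$. (If $X=0$ the inequality is immediate, because the first term is positive.)

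Finally, $s_0=a^2+b^2$. Moreover $s_q\uparrow X^2+Y^2$ and $\gamma_q\uparrow X^2+Y^2$ as $q\to\infty$, since $t_q\to0$. Together with the overlaps, the union of the intervals $(s_q,\gamma_q)$ is therefore $(a^2+b^2,X^2+Y^2)$. Observing also that $\gamma_0=\beta_0$, this is consistent with Lemma~\ref{lem:left-chain}, though that lemma is not even needed separately. This proves the inclusion in the lemma.

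The only real work is the overlap inequality in the second step. The key point to get right there is that the $A$-condition, and not merely $O$, is what is needed; this is because $c_\lambda>d_\lambda$ holds for every $\lambda$, whereas the estimate via $O$ alone would require $\lambda\geq1/3$.
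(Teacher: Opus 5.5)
Your proof is correct and follows essentially the same route as the paper. You obtain $(s_q,\gamma_q)\subset\mathcal G(I,J)$ by applying Lemma~\ref{lem:left-chain} to the right-endpoint descendants $(I_q^+,J_q^+)$ via Lemma~\ref{lem:weighted-inheritance} and \eqref{eq:GIJinc}, prove the overlaps $s_{q+1}<\gamma_q$ from $A(I,J)\geq0$ and $c_\lambda-d_\lambda=\lambda(1-\lambda)>0$, and conclude from $s_0=a^2+b^2$ and $s_q,\gamma_q\uparrow(a+L)^2+(b+M)^2$. The only difference is presentational: you bound $\gamma_q-s_{q+1}$ by a chain of inequalities, whereas the paper writes it exactly as $2t_qA(I,J)+2\lambda(1-\lambda)t_q(a+L)L+d_\lambda t_q^2L^2+\lambda t_q(2-\lambda t_q)M^2$. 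Your separate case $X=0$ never occurs, since $X=a+L\geq L>0$.
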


\begin{proof}
	By Lemmas~\ref{lem:weighted-inheritance}
	and~\ref{lem:left-chain}, together with \eqref{eq:GIJinc},
	we have $(s_q,\gamma_q)\subset\mathcal G(I,J)$
	for every $q\in\N_0$.
	Using $c_\lambda-d_\lambda=\lambda(1-\lambda)$,
	a direct expansion gives
		\begin{align*}
			\gamma_q-s_{q+1}
			&=2t_q\bigl[\lambda(b+M)M-d_\lambda(a+L)L\bigr]
			+t_q^2(d_\lambda L^2-\lambda^2M^2)\\
			&=2t_qA(I,J)+2\lambda(1-\lambda)t_q(a+L)L\\
			&\qquad+d_\lambda t_q^2L^2
			+\lambda t_q(2-\lambda t_q)M^2>0.
		\end{align*}
	Here the last inequality follows from $A(I,J)\geq0$,
	$d_\lambda>0$, and $0<\lambda t_q<1/2$.
	Thus the intervals $(s_q,\gamma_q)$ overlap successively.
	Since $s_q,\gamma_q\uparrow(a+L)^2+(b+M)^2$
	and $s_0=a^2+b^2$, their union equals
	$\bigl(a^2+b^2,(a+L)^2+(b+M)^2\bigr)$.
	This proves the lemma.
\end{proof}

\begin{lem}\label{lem:Gfull}
If $O(I,J),A(I,J),B(I,J)\geq0$, then
$\mathcal{G}(I,J)=\interior g(I\times J)$.
\end{lem}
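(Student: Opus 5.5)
The equality $\mathcal{G}(I,J)=\interior g(I\times J)$ will follow from two inclusions, one of which is already proved.

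For the inclusion $\interior g(I\times J)\subset\mathcal{G}(I,J)$, I will invoke Lemma~\ref{lem:right-chain}. Its hypotheses $O(I,J),A(I,J),B(I,J)\geq0$ are exactly those of the present lemma. Its conclusion is that $\bigl(a^2+b^2,(a+L)^2+(b+M)^2\bigr)\subset\mathcal{G}(I,J)$. Since $g$ is continuous and coordinatewise increasing on $[0,1]^2$, we have $g(I\times J)=[a^2+b^2,(a+L)^2+(b+M)^2]$. Its interior is therefore the open interval just named, and this inclusion needs no further work.

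For the reverse inclusion $\mathcal{G}(I,J)\subset\interior g(I\times J)$, I will argue directly from the definition of $\mathcal{G}$. Take $z\in\mathcal{G}(I,J)$. Then for some $q\geq1$ there is a relative level-$q$ descendant rectangle $R_1=I'\times J'$ of $I\times J$ with $z\in\interior g(R_1)$. Because $I'\subset I$ and $J'\subset J$, monotonicity of $g$ gives $g(R_1)\subset g(I\times J)$. Taking interiors, $\interior g(R_1)\subset\interior g(I\times J)$, so $z$ lies in the interior of $g(I\times J)$. Here $g(R_1)$ is a nondegenerate closed interval, which makes the interiors well defined. Notice that the definition of $\mathcal{G}$ only places $z$ in $g(I\times J)$; the descendant rectangles are what force $z$ into the interior.

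Neither step presents a real obstacle. The only point requiring care is to record explicitly that the image of each rectangle is the closed interval between its corner values, so that taking interiors behaves as expected. All the substantive covering arguments were already carried out in Lemmas~\ref{lem:left-chain} and~\ref{lem:right-chain}.
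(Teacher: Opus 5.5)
Your proposal is correct and follows essentially the same route as the paper. The paper likewise gets $\supseteq$ directly from Lemma~\ref{lem:right-chain}, and gets $\subseteq$ from the definition of $\mathcal G(I,J)$ via $\interior g(R)\subseteq\interior g(I\times J)$ for descendant rectangles $R$.
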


\begin{proof} 
The inclusion ``$\supseteq$'' follows from Lemma~\ref{lem:right-chain}.
For every descendant rectangle $R\subset I\times J$,
we have $\interior g(R)\subseteq\interior g(I\times J)$.
The reverse inclusion therefore follows from the
definition of $\mathcal G(I,J)$.
\end{proof}

We can now convert interval double covering into continuum many
circle points by iterating the construction.

\begin{prop}\label{prop:continuum-criterion}
Suppose that a cylinder pair $(I,J)$ satisfies
\begin{equation}\label{eq:five-criterion}
O(I,J),A(I,J),B(I,J)\geq0,
\qquad
C(I,J),D(I,J)>0.
\end{equation}
Then $\#E_\lambda=\cardc$.
\end{prop}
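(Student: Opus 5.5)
We will build a Cantor scheme of nested descendant rectangles whose images under $g$ all contain $1$ in their interior. Start with $R_\varnothing:=I\times J$. The hypotheses $C(I,J),D(I,J)>0$ say exactly that $1\in\interior g(I\times J)$, by \eqref{eq:1intgIJ}. The hypotheses $O,A,B\ge0$ give, via Lemma~\ref{lem:Gfull}, that $\mathcal G(I,J)=\interior g(I\times J)$. Hence $1\in\mathcal G(I,J)$. By definition of $\mathcal G$ there is $q_1\ge1$ and two distinct relative level-$q_1$ descendant rectangles $R_0,R_1$ of $I\times J$ with $1\in\interior g(R_0)\cap\interior g(R_1)$.

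The key point is that the hypotheses are inherited, so the step can be iterated. Each $R_i=I'\times J'$ is a relative level-$q_1$ descendant pair. Lemma~\ref{lem:weighted-inheritance} therefore gives $O,A,B\ge0$ for $(I',J')$. Since $g$ is coordinatewise increasing, $\interior g(R_i)$ is the open interval between the images of the lower-left and upper-right corners. So $1\in\interior g(R_i)$ is equivalent to $C(I',J'),D(I',J')>0$. Thus each $(I',J')$ again satisfies \eqref{eq:five-criterion}, and we can repeat the step.

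Inductively, for every finite binary word $\sigma$ we obtain a rectangle $R_\sigma=I_\sigma\times J_\sigma$, a product of cylinder intervals, with the following properties:
\begin{itemize}
\item[(a)] $R_{\sigma0},R_{\sigma1}$ are distinct relative descendant rectangles of $R_\sigma$ of a common depth $\ge1$;
\item[(b)] $1\in\interior g(R_\sigma)$;
\item[(c)] $(I_\sigma,J_\sigma)$ satisfies \eqref{eq:five-criterion}.
\end{itemize}
Two distinct relative level-$q$ descendant rectangles have disjoint interiors. In fact, since cylinder intervals of the same level are disjoint and they differ in at least one coordinate, $R_{\sigma0}\cap R_{\sigma1}=\varnothing$. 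The side lengths shrink by at least a factor $\lambda$ at each step, so $\diam R_\sigma\to0$ along every infinite branch.

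For each infinite sequence $\omega\in\{0,1\}^{\N}$, the set $\bigcap_n R_{\omega|n}$ is a single point $(x_\omega,y_\omega)$. Its coordinates lie in nested cylinder intervals of lengths tending to $0$. Every cylinder interval meets $K_\lambda$, and $K_\lambda$ is closed, so $x_\omega,y_\omega\in K_\lambda$. Alternatively, since $K_\lambda=\bigcap_n\bigcup_{|w|=n}I_w$ and the levels of the intervals tend to infinity, $x_\omega\in K_\lambda$ directly. Moreover, $1\in g(R_{\omega|n})$ for all $n$, so there are points $p_n\in R_{\omega|n}$ with $g(p_n)=1$. These converge to $(x_\omega,y_\omega)$, and continuity of $g$ gives $x_\omega^2+y_\omega^2=1$. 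Hence $(x_\omega,y_\omega)\in E_\lambda$.

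Distinct $\omega,\omega'$ first differ at some index, after which their points lie in the disjoint rectangles $R_{\sigma0}$ and $R_{\sigma1}$. So the map $\omega\mapsto(x_\omega,y_\omega)$ is injective, giving $\#E_\lambda\ge\cardc$. The reverse inequality is trivial since $E_\lambda\subset\R^2$.

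The only delicate step is the inheritance in the second paragraph: one must check that $1\in\interior g(R)$ really translates back into $C,D>0$ for the descendant pair, and that Lemma~\ref{lem:weighted-inheritance} applies to descendants of equal relative depth, which the definition of $\mathcal G$ guarantees.
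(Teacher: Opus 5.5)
Your proposal is correct and follows the paper's proof essentially step for step. You obtain $1\in\mathcal G(I,J)$ from \eqref{eq:1intgIJ} and Lemma~\ref{lem:Gfull}, and you propagate \eqref{eq:five-criterion} to the two chosen equal-depth descendant pairs via Lemma~\ref{lem:weighted-inheritance} and the corner description of $\interior g(R)$. The remaining steps also match the paper: you build the binary tree of nested rectangles, pass to the limit point in $K_\lambda\times K_\lambda$ with $g=1$ by continuity, and get injectivity from the disjointness of distinct same-level cylinder intervals.
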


\begin{proof}
By \eqref{eq:1intgIJ},  $1\in \interior g(I\times J)$. By Lemma~\ref{lem:Gfull}, there exist an integer $q\geq 1$ and two distinct relative level-\(q\)  descendant rectangles of $I\times J$ whose $g$-images both contain $1$ in their interiors. Lemma~\ref{lem:weighted-inheritance} shows the corresponding descendant pairs continue to satisfy $O,A,B\geq0$. Moreover, by
\eqref{eq:1intgIJ}, the fact that \(1\) belongs to the
interior of each descendant image is equivalent to the positivity
of the corresponding quantities \(C\) and \(D\). Thus each of the
two descendant pairs again satisfies \eqref{eq:five-criterion}.

We now iterate this construction. Set $(\mathcal I_\varnothing,\mathcal J_\varnothing):=(I,J)$.
Suppose that a cylinder pair
\((\mathcal I_\sigma,\mathcal J_\sigma)\) has been constructed for
some \(\sigma\in\{0,1\}^*\) and satisfies
\eqref{eq:five-criterion}. Applying the preceding argument to this
pair, choose an integer \(q_\sigma\geq1\) and two distinct relative
level-\(q_\sigma\) descendant pairs
$(\mathcal I_{\sigma0},\mathcal J_{\sigma0})$ and 
$(\mathcal I_{\sigma1},\mathcal J_{\sigma1})$,
such that
\[
1\in
\interior g(\mathcal I_{\sigma i}\times\mathcal J_{\sigma i}),
\qquad i\in\{0,1\}.
\]
Both child pairs again satisfy \eqref{eq:five-criterion}. Continuing
recursively produces a full binary tree of cylinder pairs indexed by
\(\{0,1\}^*\).

Let $\omega=(\omega_1,\omega_2,\ldots)\in\{0,1\}^{\N}$,
and write
\[
\omega|_k:=\omega_1\cdots\omega_k,
\qquad
\omega|_0:=\varnothing.
\]
The rectangles $\mathcal I_{\omega|_k}\times\mathcal J_{\omega|_k}$, $k\in\N_0$,
form a nested sequence with  diameters tending to zero. Hence their intersection
consists of a unique point, which we denote by $(x_\omega,y_\omega)$. The coding description of \(K_{\lambda}\) gives  $(x_\omega,y_\omega)\in K_{\lambda}\times K_{\lambda}$.

For each \(k\in\N_0\), we have
$1\in g(\mathcal I_{\omega|_k}\times\mathcal J_{\omega|_k})$.
Choose
$(x_k,y_k)\in
\mathcal I_{\omega|_k}\times\mathcal J_{\omega|_k}$
such that \(g(x_k,y_k)=1\). Since the diameters of the nested rectangles
tend to zero,
$(x_k,y_k)\longrightarrow(x_\omega,y_\omega)$.
By continuity of \(g\),
\[
x_\omega^2+y_\omega^2
=g(x_\omega,y_\omega)
=\lim_{k\to\infty}g(x_k,y_k)
=1.
\]
Thus $(x_\omega,y_\omega)\in E_\lambda$.

It remains to verify that distinct infinite binary sequences give
distinct points. Let \(\omega,\omega'\in\{0,1\}^{\N}\) be distinct,
and let \(r\geq1\) be the first index for which
\(\omega_r\neq\omega'_r\). The rectangles corresponding to
\(\omega|_r\) and \(\omega'|_r\) are the two distinct children of
the same node. They are relative descendants of a common depth, so
in at least one coordinate they involve distinct cylinder intervals
of the same level. Since \(\lambda<1/2\), distinct cylinder
intervals of the same level are disjoint. Hence the two rectangles
are disjoint in at least one coordinate, and consequently $(x_\omega,y_\omega)
\neq
(x_{\omega'},y_{\omega'})$.

Therefore, the map
\[
\{0,1\}^{\N}\longrightarrow E_\lambda,
\qquad
\omega\longmapsto(x_\omega,y_\omega),
\]
is injective. Since $\#\{0,1\}^{\N}=\cardc$,
we obtain \(\#E_\lambda\geq\cardc\). On the other hand,
\(E_\lambda\subset\R^2\), so \(\#E_\lambda\leq\cardc\). Therefore, $\#E_\lambda=\cardc$.
\end{proof}

\subsection{Verification of the double-covering criterion}

To apply Proposition~\ref{prop:continuum-criterion}
throughout the range \(\lambda_*<\lambda<1/2\), it suffices to construct, for each such parameter, a cylinder
pair $(I,J)$ satisfying
\[
O(I,J),A(I,J),B(I,J)\geq0,
\qquad
C(I,J),D(I,J)>0.
\]
We first treat the simpler range $2/5\leq \lambda<1/2$ using a fixed pair of short words. We then handle the more delicate range
\(\lambda_*<\lambda\leq2/5\) by a parameter-dependent
construction near \((0,1)\). Finally, we show that the criterion
cannot hold at or below \(\lambda_*\).

\begin{prop}\label{prop:highrange}
	If $2/5\leq\lambda<1/2$, then, with $I:=I_{010}$ and 
	$J:=I_{1110}$, one has
	\[
	O(I,J),A(I,J),B(I,J),C(I,J),D(I,J)>0.
	\]
	Consequently,  $\#E_\lambda=\cardc$.  
\end{prop}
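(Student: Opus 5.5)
Compute the endpoints explicitly and reduce each of the five inequalities to a polynomial inequality in $\lambda$ on $[2/5,1/2)$; then Proposition~\ref{prop:continuum-criterion} gives $\#E_\lambda=\cardc$ at once. By the formula $a_w=(1-\lambda)\sum_j w_j\lambda^{j-1}$,
\[
a=a_{010}=(1-\lambda)\lambda,\qquad L=\lambda^3,
\]
\[
b=a_{1110}=(1-\lambda)(1+\lambda+\lambda^2)=1-\lambda^3,\qquad M=\lambda^4.
\]
Consequently $a+L=\lambda-\lambda^2+\lambda^3$ and $b+M=1-\lambda^3+\lambda^4$.

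The plan is to substitute these into \eqref{eq:defOABCD} term by term.
\begin{itemize}
\item $O=(1-\lambda^3)\lambda^4-(\lambda-\lambda^2+\lambda^3)\lambda^3=\lambda^4\bigl(-\lambda^3-\lambda^2+\lambda\bigr)=\lambda^5(1-\lambda-\lambda^2)$. This is positive since $c_\lambda>0$.
\item $A=\lambda^5(1-\lambda^3)-c_\lambda\lambda^4(1-\lambda+\lambda^2)$. After dividing by $\lambda^4$, we must show $\lambda-\lambda^4>(1-\lambda-\lambda^2)(1-\lambda+\lambda^2)$.
\item $B=\lambda^5(1-\lambda)-(1-2\lambda)\lambda^4(1-\lambda^3+\lambda^4)$. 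After dividing by $\lambda^4$, we must show $\lambda-\lambda^2>(1-2\lambda)(1-\lambda^3+\lambda^4)$.
\item $C=1-\lambda^2(1-\lambda)^2-(1-\lambda^3)^2$.
\item $D=(\lambda-\lambda^2+\lambda^3)^2+(1-\lambda^3+\lambda^4)^2-1$.
\end{itemize}
Each of these is a polynomial $p(\lambda)$ to be shown positive on $[2/5,1/2]$.

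Several of them should be easy.

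\emph{For $B$:} at $\lambda=1/2$ the right side vanishes and the left side is $1/4$. At $\lambda=2/5$ the left side is $0.24$ and the right side is about $0.2\cdot 0.962\approx 0.19$. On the interval the left side is decreasing mildly and the right side is decreasing, so I would bound crudely: the left side is $\geq 0.24$ and the right side is $\leq (1-2\lambda)\leq 0.2$.

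\emph{For $C$:} expand to $C=2\lambda^3-\lambda^6-\lambda^2(1-\lambda)^2$. Dividing by $\lambda^2$ leaves $2\lambda-\lambda^4-(1-\lambda)^2$, which is increasing in $\lambda$ on the interval, and at $2/5$ equals $0.8-0.0256-0.36>0$.

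\emph{For $D$:} write $D=\lambda^2(1-\lambda+\lambda^2)^2-2\lambda^3(1-\lambda)+\lambda^6(1-\lambda)^2$. Dividing by $\lambda^2$, we need $(1-\lambda+\lambda^2)^2>2\lambda(1-\lambda)$. With $s=\lambda(1-\lambda)\in[0.24,0.25]$, this reads $(1-s)^2>2s$, which holds since $(0.75)^2=0.5625>0.5$.

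The main obstacle is $A$, which is the tightest condition. Expanding, $(1-\lambda-\lambda^2)(1-\lambda+\lambda^2)=1-2\lambda+\lambda^2-\lambda^4$... more precisely it equals $(1-\lambda)^2-\lambda^4$. The $-\lambda^4$ cancels against the one on the left, so the condition becomes $\lambda>(1-\lambda)^2$, i.e.\ $\lambda^2-3\lambda+1<0$. This holds precisely for $\lambda>(3-\sqrt5)/2\approx0.382$, so in particular on $[2/5,1/2)$. This cancellation is where care is needed, and it explains why the range starts near $2/5$.

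With all five quantities positive, Proposition~\ref{prop:continuum-criterion} yields $\#E_\lambda=\cardc$.
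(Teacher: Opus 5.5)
Your proposal is correct and follows the paper's route: you use the same endpoints $a=\lambda-\lambda^2$, $L=\lambda^3$, $b=1-\lambda^3$, $M=\lambda^4$ and the same reduction of all five quantities to polynomial inequalities on $[2/5,1/2)$ (your cancellation for $A$ reproduces the paper's $A=\lambda^4(3\lambda-\lambda^2-1)$), followed by Proposition~\ref{prop:continuum-criterion}. Your crude bounds for $B$ and the substitution $s=\lambda(1-\lambda)$ for $D$ stand in for the paper's derivative argument and its sum-of-squares identity, and are equally valid. One small slip: $\lambda-\lambda^2$ is increasing, not decreasing, on $[2/5,1/2]$, but this does not affect your bound $\lambda-\lambda^2\geq 0.24$.
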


\begin{proof}
For this cylinder pair, we have  $I=[a,a+L]$, $J=[b,b+M]$, where 
	\begin{equation*}
		a=\lambda-\lambda^2,
		\quad L=\lambda^3,
		\qquad
		b=1-\lambda^3,
		\quad M=\lambda^4.
	\end{equation*}
Substituting these values into \eqref{eq:defOABCD}, a direct calculation gives
\[
\begin{aligned}
		O(I,J)&=\lambda^5(1-\lambda-\lambda^2),\\
		A(I,J)&=\lambda^4(3\lambda-\lambda^2-1),\\
		B(I,J)&=\lambda^4Q_B(\lambda),\\
		C(I,J)&=\lambda^2Q_C(\lambda),\\
		D(I,J)&=\lambda^2Q_D(\lambda),
\end{aligned}
\]
	where
	\begin{align*}
		Q_B(x)&:=2x^5-3x^4+x^3-x^2+3x-1,\\
		Q_C(x)&:=-x^4-x^2+4x-1,\\
		Q_D(x)&:=x^6-2x^5+2x^4-2x^3+5x^2-4x+1.
	\end{align*}
These identities immediately give \( O(I, J) > 0 \) and \( A(I, J) > 0 \). 
For \(x\in[2/5,1/2]\), we have
\[
\begin{aligned}
	Q_B'(x)&=10x^4+3x^2-12x^3-2x+3\geq 3-12\left(\frac{1}{2}\right)^3
	-2\left(\frac{1}{2}\right)
	=\frac{1}{2}>0,\\
	Q_C'(x)
	&=4-2x-4x^3
	\geq4-2\left(\frac{1}{2}\right)
	-4\left(\frac{1}{2}\right)^3
	=\frac{5}{2}>0.
\end{aligned}
\]
Hence \(Q_B\) and \(Q_C\) are strictly increasing on
\([2/5,1/2]\).
Since
\[
Q_B\left(\frac{2}{5}\right)=\frac{149}{3125}>0,
\qquad
Q_C\left(\frac{2}{5}\right)=\frac{259}{625}>0,
\]
both \(Q_B\) and \(Q_C\) are positive throughout
\([2/5,1/2]\). Note that
\[
Q_D(x) = (1 - 2x)^2 + x^2(1 - x)^2(1 + x^2) > 0.
\]
Hence all five quantities are strictly positive, and
Proposition~\ref{prop:continuum-criterion} gives
\(\#E_\lambda=\cardc\).
\end{proof}

We next treat the range $\lambda_*<\lambda\leq
\frac{2}{5}$, which requires a parameter-dependent asymptotic construction near the point \((0,1)\). For fixed \(\lambda\in (\lambda_*, 2/5]\), the integer \(t\) is chosen so that the leading terms in the five criterion quantities are positive, while taking \(p\) large moves the cylinder pair towards \((0,1)\) and makes the remaining error terms tend to zero.

Set
\begin{equation*}
\rho_\lambda:=\lambda(1+\lambda),
\qquad
\eta_\lambda:=2\rho_\lambda-1=2\lambda^2+2\lambda-1>0.
\end{equation*}
The quantity \(\eta_\lambda\) measures the margin above the
threshold  \(\lambda_*\); indeed,
\(\eta_\lambda>0\) if and only if
\(\lambda>\lambda_*\).
Let $t=t(\lambda)\in\N$ be the smallest integer satisfying
\begin{equation}\label{eq:t-choice}
\lambda^t\leq2\eta_\lambda,
\end{equation}
and write $\tau:=\lambda^t$.
Such an integer exists since \(\lambda^k\to0\) as \(k\to\infty\).
For \(\lambda\leq2/5\),
\[
\lambda-2\eta_\lambda
=2-3\lambda-4\lambda^2
\geq
2-\frac{6}{5}-\frac{16}{25}
=\frac{4}{25}>0.
\]
Hence $2\eta_\lambda<\lambda$.
Consequently, $t\geq 2$.  Minimality of $t$ also gives
$2\lambda\eta_\lambda<\tau\leq2\eta_\lambda$.

Let $\xi_q$ be the prefix of length $q$ of the alternating sequence $1010\cdots$, with $\xi_0=\varnothing$. For $p\in\N$, define
\begin{equation}\label{eq:uptvpt}
u_{p,t}:=0^p\xi_t,
\qquad
v_{p,t}:=1^{2p+1}0\xi_{t-2}.
\end{equation}
The construction is guided by the quadratic tangency of the unit
circle to the horizontal line $y=1$ at \((0,1)\):
\[
\sqrt{1-x^2}=1-\frac{x^2}{2}+O(x^4)  \qquad(x\to0).
\]
Thus a horizontal scale of order \(\lambda^p\) naturally corresponds
to a vertical displacement from $1$ of order \(\lambda^{2p}\). This explains
the initial blocks \(0^p\) and \(1^{2p+1}\) in the two words in \eqref{eq:uptvpt}.
The alternating tails provide a finer adjustment at the scale
\(\tau=\lambda^t\).
According to the parity of $t$, set
\begin{equation*}
\kappa:=
\begin{cases}
1,&t\ \text{even},\\
\lambda,&t\ \text{odd},
\end{cases}
\qquad
\vartheta:=1+\lambda-\kappa=
\begin{cases}
\lambda,&t\ \text{even},\\
1,&t\ \text{odd}.
\end{cases}
\end{equation*}
We further introduce the normalized quantities
\begin{equation*}
\delta:=\frac{\lambda^p}{1+\lambda},
\qquad
U:=\rho_\lambda+\kappa(1+\lambda)\tau,
\qquad
W:=-\rho_\lambda+(1+\lambda)\vartheta\tau.
\end{equation*}

We first record the endpoints and lengths of the alternating
cylinders in the normalized variables introduced above.

\begin{lem}\label{lem:endpoints}
Let $I=I_{u_{p,t}}=[a,a+L]$ and $J=I_{v_{p,t}}=[b,b+M]$.
Then
\begin{align}
a&=\delta(1-\kappa\tau),
&L&=(1+\lambda)\delta\tau,\label{eq:alt-a-L}\\
b&=1-\delta^2U,
&M&=(1+\lambda)^2\delta^2\tau,\label{eq:alt-b-M}\\
a+L&=\delta(1+\vartheta\tau),
&b+M&=1+\delta^2W.\label{eq:alt-right-endpoints}
\end{align}
\end{lem}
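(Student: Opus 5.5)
The plan is to compute everything from the explicit formula $a_w=(1-\lambda)\sum_{j=1}^n w_j\lambda^{j-1}$ and $|I_w|=\lambda^{|w|}$, using geometric sums for the alternating tails, and then rewrite the results in the normalized variables $\delta,\tau,\kappa,\vartheta,U,W$.

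\emph{Lengths.} Since $|u_{p,t}|=p+t$, we get $L=\lambda^{p+t}=(1+\lambda)\delta\tau$, because $\lambda^p=(1+\lambda)\delta$. Since $|v_{p,t}|=(2p+1)+1+(t-2)=2p+t$, we get $M=\lambda^{2p+t}=(1+\lambda)^2\delta^2\tau$.

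\emph{Left endpoint $a$.} The ones of $u_{p,t}=0^p\xi_t$ occur at positions $p+1,p+3,\dots$, the last one at most $p+t$. Hence
\[
a=(1-\lambda)\lambda^p\sum_{0\le 2i\le t-1}\lambda^{2i}=\frac{\lambda^p\bigl(1-\lambda^{2\lceil t/2\rceil}\bigr)}{1+\lambda}.
\]
For $t$ even, $\lambda^{2\lceil t/2\rceil}=\tau$; for $t$ odd, it equals $\lambda\tau$. In both cases this is $\kappa\tau$, so $a=\delta(1-\kappa\tau)$. Then $a+L=\delta\bigl(1-\kappa\tau+(1+\lambda)\tau\bigr)=\delta(1+\vartheta\tau)$, using $\vartheta=1+\lambda-\kappa$.

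\emph{Left endpoint $b$.} The prefix $1^{2p+1}$ contributes $1-\lambda^{2p+1}$. The $0$ sits at position $2p+2$. The tail $\xi_{t-2}$ has ones at positions $2p+3,2p+5,\dots$, the last one at most $2p+t$, so it contributes
\[
(1-\lambda)\lambda^{2p+2}\,\frac{1-\lambda^{2\lceil (t-2)/2\rceil}}{1-\lambda^2}=\frac{\lambda^{2p+2}\bigl(1-\kappa\tau/\lambda^2\bigr)}{1+\lambda},
\]
where the same parity check gives $\lambda^{2\lceil (t-2)/2\rceil}=\kappa\tau/\lambda^2$. (When $t=2$ the tail is empty, and the formula correctly gives $0$ since $\kappa\tau/\lambda^2=1$.) Therefore
\[
1-b=\lambda^{2p+1}-\frac{\lambda^{2p+2}-\lambda^{2p}\kappa\tau}{1+\lambda}=\frac{\lambda^{2p}}{1+\lambda}\bigl(\lambda(1+\lambda)-\lambda^2+\kappa\tau\bigr)=\frac{\lambda^{2p}(\lambda+\kappa\tau)}{1+\lambda}.
\]
Since $\lambda^{2p}=(1+\lambda)^2\delta^2$, this equals $\delta^2(1+\lambda)(\lambda+\kappa\tau)=\delta^2U$, because $U=\rho_\lambda+\kappa(1+\lambda)\tau$ and $\rho_\lambda=\lambda(1+\lambda)$. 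Finally,
\[
b+M=1-\delta^2\bigl(\rho_\lambda+\kappa(1+\lambda)\tau-(1+\lambda)^2\tau\bigr)=1+\delta^2\bigl(-\rho_\lambda+(1+\lambda)\vartheta\tau\bigr)=1+\delta^2W.
\]

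The only delicate point is the parity bookkeeping: identifying the exponent of the last $1$ in each alternating tail as $\kappa\tau$ (resp.\ $\kappa\tau/\lambda^2$). This is checked separately for $t$ even and $t$ odd, together with the edge case $t=2$. Everything else is direct substitution.
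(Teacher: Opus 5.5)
Your proof is correct and follows the same route as the paper. In both, $a$ and $b$ come from the coding formula $a_w$ as geometric sums over the odd positions of the alternating tails, the parity-dependent last exponent is identified with $\kappa\tau$, and the right endpoints follow by adding the lengths and using $\kappa+\vartheta=1+\lambda$. Your explicit computation of $1-b$ and your check of the $t=2$ edge case simply spell out what the paper compresses into ``the same computation for $v_{p,t}$.''
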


\begin{proof}
In the word $\xi_t$, the digit $1$ occurs in the odd positions. Thus
\[
(1-\lambda)
\sum_{\substack{1\leq j\leq t\\j\ \text{odd}}}
\lambda^{j-1}
=
\frac{1-\kappa\lambda^t}{1+\lambda}.
\]
Therefore,
\[
a
=
\lambda^p\frac{1-\kappa\lambda^t}{1+\lambda}
=
\delta(1-\kappa\tau),
\]
while $L=\lambda^{p+t}=(1+\lambda)\delta\tau$.  The same computation for $v_{p,t}$ gives
\[
b=1-\frac{\lambda^{2p+1}+\kappa\lambda^{2p+t}}{1+\lambda},
\qquad
M=\lambda^{2p+t},
\]
which is equivalent to \eqref{eq:alt-b-M}. Adding the lengths and using $\kappa+\vartheta=1+\lambda$ yields \eqref{eq:alt-right-endpoints}.
\end{proof}

To separate the dependence on \(p\) in the five criterion
quantities, define
\[
\begin{aligned}
	\Phi_O&:=\lambda-\vartheta\tau,\\
	\Phi_A&:=\eta_\lambda-c_\lambda\vartheta\tau,\\
	\Phi_B&:=\eta_\lambda-\lambda\kappa\tau,\\
	\Phi_C&:=\eta_\lambda
	+2\kappa(2+\lambda)\tau-\kappa^2\tau^2,\\
	\Phi_D&:=-\eta_\lambda
	+2\vartheta(2+\lambda)\tau+\vartheta^2\tau^2.
\end{aligned}
\]
The following identities express the five criterion quantities in
terms of these auxiliary quantities.

\begin{lem}\label{lem:OABCD-identities}
For the cylinders in Lemma~\ref{lem:endpoints},
\begin{equation}\label{eq:OABCDatIJ}
\begin{aligned}
O(I,J)&=(1+\lambda)\delta^2\tau
\bigl[\Phi_O-(1+\lambda)\delta^2U\bigr],\\
A(I,J)&=(1+\lambda)\delta^2\tau
\bigl[\Phi_A-\rho_\lambda\delta^2U\bigr],\\
B(I,J)&=(1+\lambda)\delta^2\tau
\bigl[\Phi_B-d_\lambda(1+\lambda)\delta^2W\bigr],\\
C(I,J)&=\delta^2\bigl[\Phi_C-\delta^2U^2\bigr],\\
D(I,J)&=\delta^2\bigl[\Phi_D+\delta^2W^2\bigr].
\end{aligned}
\end{equation}
\end{lem}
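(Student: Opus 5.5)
This is a direct substitution. We insert the expressions of Lemma~\ref{lem:endpoints} into the five definitions \eqref{eq:defOABCD} and simplify, using only the algebraic relations
\[
\kappa+\vartheta=1+\lambda,\qquad \rho_\lambda=\lambda(1+\lambda),\qquad \eta_\lambda=2\rho_\lambda-1,\qquad c_\lambda=1-\lambda-\lambda^2=1-\rho_\lambda,\qquad d_\lambda=1-2\lambda.
\]
In every case the plan is the same. Expand, group the terms of order $\delta^2$ (the leading terms, which depend only on $\lambda,\tau,\kappa,\vartheta$), and check that these coincide with the prescribed factor times $\Phi_\bullet$. Then check that the remaining $\delta^4$ terms match the displayed corrections.

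\emph{The $O,A,B$ identities.}
\begin{itemize}
\item For $O$: we have $bM=(1+\lambda)^2\delta^2\tau(1-\delta^2U)$ and $(a+L)L=(1+\lambda)\delta^2\tau(1+\vartheta\tau)$. Factoring out $(1+\lambda)\delta^2\tau$ leaves
\[
(1+\lambda)-(1+\lambda)\delta^2U-1-\vartheta\tau=\Phi_O-(1+\lambda)\delta^2U .
\]
\item For $A$: the same factorization gives
\[
\lambda(1+\lambda)(1-\delta^2U)-c_\lambda(1+\vartheta\tau)=\rho_\lambda-c_\lambda-c_\lambda\vartheta\tau-\rho_\lambda\delta^2U ,
\]
and $\rho_\lambda-c_\lambda=2\rho_\lambda-1=\eta_\lambda$.
\item For $B$: we get
\[
\lambda(1-\kappa\tau)-d_\lambda(1+\lambda)(1+\delta^2W)=\lambda-d_\lambda(1+\lambda)-\lambda\kappa\tau-d_\lambda(1+\lambda)\delta^2W .
\]
Here $\lambda-(1-2\lambda)(1+\lambda)=2\lambda^2+2\lambda-1=\eta_\lambda$.
\end{itemize}

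\emph{The $C,D$ identities.}
\begin{itemize}
\item For $C$: compute
\[
1-a^2-b^2=2\delta^2U-\delta^4U^2-\delta^2(1-\kappa\tau)^2=\delta^2\bigl[2U-(1-\kappa\tau)^2-\delta^2U^2\bigr].
\]
Expanding gives $2U-(1-\kappa\tau)^2=2\rho_\lambda-1+2\kappa(1+\lambda)\tau+2\kappa\tau-\kappa^2\tau^2$. Since $2(1+\lambda)+2=2(2+\lambda)$, this equals $\Phi_C$.
\item For $D$: similarly,
\[
(a+L)^2+(b+M)^2-1=\delta^2\bigl[(1+\vartheta\tau)^2+2W+\delta^2W^2\bigr],
\]
and $(1+\vartheta\tau)^2+2W=1-2\rho_\lambda+2\vartheta(2+\lambda)\tau+\vartheta^2\tau^2=\Phi_D$.
\end{itemize}

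Everything here is routine. The only point needing care is bookkeeping: matching the $\delta^2$ versus $\delta^4$ orders, and using $\kappa+\vartheta=1+\lambda$ correctly in the right endpoints. Those endpoints are already recorded in Lemma~\ref{lem:endpoints}, so no real obstacle is expected.
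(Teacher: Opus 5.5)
Your proposal is correct and follows the paper's proof: you substitute the endpoint formulas of Lemma~\ref{lem:endpoints}, factor out $(1+\lambda)\delta^2\tau$ for $O,A,B$ and $\delta^2$ for $C,D$, and use $\rho_\lambda-c_\lambda=\lambda-d_\lambda(1+\lambda)=\eta_\lambda$. All five of your expansions, including the $\delta^4$ correction terms, check out.
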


\begin{proof}
	Using the endpoint formulas in
	\eqref{eq:alt-a-L}--\eqref{eq:alt-right-endpoints}, we obtain
	\[
	\begin{aligned}
		\frac{O(I,J)}{(1+\lambda)\delta^2\tau}
		&=
		(1+\lambda)(1-\delta^2U)
		-(1+\vartheta\tau)=
		\Phi_O-(1+\lambda)\delta^2U,\\[1mm]
		\frac{A(I,J)}{(1+\lambda)\delta^2\tau}
		&=
		\lambda(1+\lambda)(1-\delta^2U)
		-c_\lambda(1+\vartheta\tau)=\Phi_A-\rho_\lambda\delta^2U,\\[1mm]
		\frac{B(I,J)}{(1+\lambda)\delta^2\tau}
		&=
		\lambda(1-\kappa\tau)
		-d_\lambda(1+\lambda)(1+\delta^2W)=
		\Phi_B-d_\lambda(1+\lambda)\delta^2W.
	\end{aligned}
	\]
	Similarly,
	\[
	\begin{aligned}
		\frac{C(I,J)}{\delta^2}
		&=
		2U-(1-\kappa\tau)^2-\delta^2U^2=\Phi_C-\delta^2U^2,\\[1mm]
		\frac{D(I,J)}{\delta^2}
		&=
		(1+\vartheta\tau)^2+2W+\delta^2W^2=\Phi_D+\delta^2W^2.
	\end{aligned}
	\]
	Here we have used $
	\rho_\lambda-c_\lambda=
	\lambda-d_\lambda(1+\lambda)=\eta_\lambda$,
	together with the definitions of \(U,W\) and
	\(\Phi_O,\ldots,\Phi_D\). This proves
	\eqref{eq:OABCDatIJ}.
\end{proof}

In view of Lemma~\ref{lem:OABCD-identities}, it remains to show
that the five auxiliary quantities above are strictly positive.

\begin{lem}\label{lem:PhiOABCD0}
Let $
\lambda_*<\lambda\leq2/5$ and let \(t,\tau,\kappa,\vartheta\) be defined as above. Then
\[
\Phi_O,\Phi_A,\Phi_B,\Phi_C,\Phi_D>0.
\]
\end{lem}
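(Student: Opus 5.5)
The plan is to use only the two facts about $\tau$ coming from the minimal choice of $t$ in \eqref{eq:t-choice}, namely
\[
2\lambda\eta_\lambda<\tau\leq 2\eta_\lambda,
\]
together with $\kappa,\vartheta\in\{\lambda,1\}$ and $\kappa+\vartheta=1+\lambda$. Each $\Phi$ is then bounded crudely by replacing $\kappa$ or $\vartheta$ with whichever of $\lambda,1$ is worse, and $\tau$ with the appropriate endpoint. We also use $0<\eta_\lambda$ and, for $\lambda\le 2/5$, the already established inequality $2\eta_\lambda<\lambda$.

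The upper-bound terms come first. For $\Phi_O$: $\vartheta\tau\le\tau\le2\eta_\lambda<\lambda$, so $\Phi_O>0$. For $\Phi_B$: $\lambda\kappa\tau\le\lambda\tau\le2\lambda\eta_\lambda<\eta_\lambda$ since $\lambda<1/2$, so $\Phi_B>0$. For $\Phi_A$ we need $c_\lambda\vartheta\tau<\eta_\lambda$. Bounding crudely by $c_\lambda\cdot2\eta_\lambda$, this requires $2c_\lambda<1$, i.e.\ $1-2\lambda-2\lambda^2<0$, which is exactly $\eta_\lambda>0$. So $\Phi_A\ge\eta_\lambda(1-2c_\lambda)=\eta_\lambda^2>0$, using $1-2c_\lambda=2\lambda^2+2\lambda-1=\eta_\lambda$.

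Next come the terms involving $\eta_\lambda$ and quadratic corrections. For $\Phi_C$, note that $\kappa\tau\le\tau<\lambda<1$, so $2\kappa(2+\lambda)\tau-\kappa^2\tau^2=\kappa\tau\bigl(2(2+\lambda)-\kappa\tau\bigr)>0$. Adding $\eta_\lambda>0$ gives $\Phi_C>0$. For $\Phi_D$, which carries $-\eta_\lambda$, we use the lower bound on $\tau$: $2\vartheta(2+\lambda)\tau\ge 2\lambda(2+\lambda)\tau>2\lambda(2+\lambda)\cdot2\lambda\eta_\lambda=4\lambda^2(2+\lambda)\eta_\lambda$. We then need $4\lambda^2(2+\lambda)\ge1$ on $(\lambda_*,2/5]$. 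Since $\lambda_*\approx0.366$, this gives $4(0.134)(2.366)\approx1.27>1$, and the left side is increasing, so it suffices to check at $\lambda=\lambda_*$. There $\lambda_*^2=(1-\lambda_*)/2$, since $2\lambda^2+2\lambda-1=0$, so $4\lambda_*^2(2+\lambda_*)=2(1-\lambda_*)(2+\lambda_*)=2(2-\lambda_*-\lambda_*^2)=2\bigl(2-\lambda_*-(1-\lambda_*)/2\bigr)=3-\lambda_*>1$. The $\vartheta^2\tau^2$ term is nonnegative, so $\Phi_D>0$.

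The main obstacle is $\Phi_D$, the only place where the lower bound $\tau>2\lambda\eta_\lambda$ (i.e.\ minimality of $t$) is essential; the worst case is $t$ even, where $\vartheta=\lambda$. The quadratic identity for $\lambda_*$ handles it cleanly, with room to spare. The bound for $\Phi_A$ is the other point where the threshold $\lambda_*$ is used, through the identity $1-2c_\lambda=\eta_\lambda$.
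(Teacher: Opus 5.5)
Your proof is correct and is essentially the paper's argument: the same bounds $\lambda\le\kappa,\vartheta\le1$ and $2\lambda\eta_\lambda<\tau\le2\eta_\lambda<\lambda$, the identity $1-2c_\lambda=\eta_\lambda$ for $\Phi_A$, the factorization $\Phi_C=\eta_\lambda+\kappa\tau[2(2+\lambda)-\kappa\tau]$, and the bound $\Phi_D>\eta_\lambda[4\lambda^2(2+\lambda)-1]$ via monotonicity of $x\mapsto4x^2(2+x)$. There is one slip in your last step: $2\lambda_*^2+2\lambda_*-1=0$ gives $\lambda_*^2=(1-2\lambda_*)/2$, not $(1-\lambda_*)/2$, so $4\lambda_*^2(2+\lambda_*)=2(1-\lambda_*)=3-\sqrt3\approx1.27$ (matching your own numerical check) rather than $3-\lambda_*$; since this value is still $>1$, the conclusion is unaffected.
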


\begin{proof}
	We have $\lambda\leq\kappa,\vartheta\leq1$ and
	$2\lambda\eta_\lambda<\tau\leq2\eta_\lambda<\lambda$.
	Since $1-2c_\lambda=\eta_\lambda$, these bounds give
	\[
	\begin{aligned}
		\Phi_O&\geq\lambda-2\eta_\lambda>0,\\
		\Phi_A&\geq\eta_\lambda-2c_\lambda\eta_\lambda=\eta_\lambda^2>0,\\
		\Phi_B&\geq\eta_\lambda(1-2\lambda)>0.
	\end{aligned}
	\]
	Also, $0<\kappa\tau<1$ implies
	$\Phi_C=\eta_\lambda+\kappa\tau[2(2+\lambda)-\kappa\tau]>0$.
	Finally,
\[
\Phi_D>\eta_\lambda[-1+4\lambda\vartheta(2+\lambda)]
		\geq\eta_\lambda[-1+4\lambda^2(2+\lambda)]>0,
\]
	since $x\mapsto4x^2(2+x)$ is increasing for $x>0$ and
	$4\lambda_*^2(2+\lambda_*)=2(1-\lambda_*)>1$.
\end{proof}

The preceding two lemmas show that, by taking \(p\) sufficiently
large, the five criterion inequalities can be satisfied simultaneously.

\begin{prop}\label{prop:lowrange}
Let $
\lambda_*<\lambda\leq 2/5$ and choose \(t=t(\lambda)\) as in \eqref{eq:t-choice}. Then, for every sufficiently large $p\in\N$, the cylinder pair $(I,J):=(I_{u_{p,t}}, I_{v_{p,t}})$ satisfies
\begin{equation}\label{eq:OABCD-positive}
O(I,J),A(I,J),B(I,J),C(I,J),D(I,J)>0.
\end{equation}
\end{prop}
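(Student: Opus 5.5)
The plan is to combine Lemma~\ref{lem:OABCD-identities} with Lemma~\ref{lem:PhiOABCD0} by a limiting argument in $p$. The key observation is that, for fixed $\lambda$, the quantities $t$, $\tau$, $\kappa$, $\vartheta$, and hence $\Phi_O,\dots,\Phi_D$, $U$ and $W$, do not depend on $p$. Only $\delta=\lambda^p/(1+\lambda)$ varies with $p$, and $\delta\to0$ as $p\to\infty$.

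The first step is to note that the prefactors in \eqref{eq:OABCDatIJ} are strictly positive. These prefactors are $(1+\lambda)\delta^2\tau$ for $O,A,B$ and $\delta^2$ for $C,D$. Thus the sign of each criterion quantity equals the sign of the corresponding bracket.

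The second step is to bound $U$ and $W$ independently of $p$. Since $0<\tau<\lambda<1$ and $\lambda\le\kappa,\vartheta\le1$, we have $|U|,|W|\le \rho_\lambda+(1+\lambda)\le 3$. Each bracket then has the form $\Phi_\bullet+\delta^2R_\bullet$ with $|R_\bullet|\le c$ for an explicit constant $c$. For example, $c=(1+\lambda)\cdot 3$ for $O$, $\rho_\lambda\cdot 3$ for $A$, $d_\lambda(1+\lambda)\cdot 3$ for $B$, and $9$ for $C$. For $D$ the extra term $\delta^2W^2\ge0$ only helps, so $D(I,J)\ge\delta^2\Phi_D>0$ for every $p$.

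The final step is to set $\Phi_{\min}:=\min\{\Phi_O,\Phi_A,\Phi_B,\Phi_C\}$, which is strictly positive by Lemma~\ref{lem:PhiOABCD0}. Choosing $p$ large enough that $9\delta^2<\Phi_{\min}$ makes all brackets positive, and this condition continues to hold for all larger $p$ since $\delta$ decreases. This yields \eqref{eq:OABCD-positive}.

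There is no real obstacle here: all the substantive work is already contained in Lemmas~\ref{lem:endpoints}--\ref{lem:PhiOABCD0}. The only point requiring care is checking that $u_{p,t}$ and $v_{p,t}$ are genuine words for all $p\ge1$. This needs $t\ge2$, so that $\xi_{t-2}$ is defined, and that was established right after \eqref{eq:t-choice}.
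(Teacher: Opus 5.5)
Your proposal is correct and follows essentially the same route as the paper. The paper's proof also notes that $U$, $W$, and $\Phi_O,\dots,\Phi_D$ are independent of $p$, invokes Lemma~\ref{lem:PhiOABCD0} for their positivity, and lets $\delta\to0$ in the identities of Lemma~\ref{lem:OABCD-identities}; your explicit bound $9\delta^2<\min\{\Phi_O,\Phi_A,\Phi_B,\Phi_C\}$, together with the observation $D(I,J)\geq\delta^2\Phi_D$, simply makes ``sufficiently large $p$'' quantitative.
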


\begin{proof}
For fixed \(\lambda\) and \(t\), the quantities
\(U,W,\Phi_O,\ldots,\Phi_D\) are independent of \(p\), whereas
\[
\delta=\frac{\lambda^p}{1+\lambda}\longrightarrow0 \qquad\text{as }p\to\infty.
\]
By Lemma~\ref{lem:PhiOABCD0}, \(\Phi_O,\Phi_A,\Phi_B,\Phi_C,\Phi_D\) are strictly positive. Therefore, the identities in
\eqref{eq:OABCDatIJ} imply that \eqref{eq:OABCD-positive} holds for all sufficiently
large \(p\).
\end{proof}

The two constructions above cover the full parameter range
\((\lambda_*,1/2)\). We can now complete the proof of
Theorem~\ref{thm:intersection}(ii).

\begin{proof}[Proof of
	Theorem~\ref{thm:intersection}(ii)]
For \(2/5\leq\lambda<1/2\), Proposition~\ref{prop:highrange}
provides a cylinder pair satisfying \eqref{eq:five-criterion}.
For \(\lambda_*<\lambda\leq2/5\), such a pair is supplied by
Proposition~\ref{prop:lowrange}. In either case,
Proposition~\ref{prop:continuum-criterion} yields \(\#E_\lambda=\cardc\).
\end{proof}

We conclude the section by showing that the lower endpoint
\(\lambda_*\) cannot be improved within the criterion of
Proposition~\ref{prop:continuum-criterion}. However, this does not assert that
\(\lambda_*\) is the true continuum-cardinality threshold for
\(E_\lambda\).

\begin{prop}\label{prop:ABlambda-star}
	If a cylinder pair $(I,J)$ satisfies $A(I,J)\geq0$ and $B(I,J)\geq0$, then
	\[
	\lambda>\lambda_*=\frac{\sqrt{3}-1}{2}.
	\]
\end{prop}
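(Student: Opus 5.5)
We plan to combine the two inequalities $A(I,J)\ge 0$ and $B(I,J)\ge 0$ so that the position variables $a,b$ cancel against each other. What should remain is an inequality involving only $\lambda$ and the lengths $L,M$, which we then show is incompatible with $\lambda\le\lambda_*$. We use only the constraints $0\le a$, $b\le 1$, $a+L\le 1$, $b+M\le 1$ coming from $I,J\subset[0,1]$, and $L,M>0$.

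\textbf{Step 1: extract two inequalities.} From $A\ge0$ we get
\[
\lambda bM\ \ge\ c_\lambda(a+L)L\ \ge\ c_\lambda aL,
\]
and from $B\ge0$ we get
\[
\lambda aL\ \ge\ d_\lambda(b+M)M\ \ge\ d_\lambda bM .
\]
Here $a,b\ge0$. Note that $b>0$ is forced, since $A\ge0$ gives $\lambda bM\ge c_\lambda L^2>0$. Likewise $B\ge0$ gives $\lambda aL\ge d_\lambda M^2>0$, so $a>0$.

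\textbf{Step 2: cancel the positions.} Write $X:=aL>0$ and $Y:=bM>0$. The sharper forms of the inequalities are
\[
\lambda Y\ \ge\ c_\lambda(X+L^2),\qquad \lambda X\ \ge\ d_\lambda(Y+M^2).
\]
Multiply the first by $\lambda$ and substitute the second:
\[
\lambda^2Y\ \ge\ c_\lambda\lambda X+c_\lambda\lambda L^2\ \ge\ c_\lambda d_\lambda(Y+M^2)+c_\lambda\lambda L^2 .
\]
Hence $(\lambda^2-c_\lambda d_\lambda)Y>0$, which forces $\lambda^2>c_\lambda d_\lambda$.

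\textbf{Step 3: check whether this is enough.} We have
\[
c_\lambda d_\lambda=(1-\lambda-\lambda^2)(1-2\lambda)=1-3\lambda+\lambda^2+2\lambda^3,
\]
so the condition becomes $2\lambda^3-3\lambda+1<0$. Since $2\lambda^3-3\lambda+1=(\lambda-1)(2\lambda^2+2\lambda-1)$ and $\lambda-1<0$, this is equivalent to
\[
2\lambda^2+2\lambda-1>0,\qquad\text{i.e.}\qquad \eta_\lambda>0 .
\]
As the paper notes, $\eta_\lambda>0$ holds exactly when $\lambda>\lambda_*$. This is the desired conclusion, and the strict inequality comes from the strictly positive terms $c_\lambda d_\lambda M^2+c_\lambda\lambda L^2$.

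The main point requiring care is Step 2: the cancellation is legitimate only because $c_\lambda,d_\lambda,\lambda>0$ preserve the direction of each inequality, and because $Y>0$ allows us to divide by $Y$. I would also record the degenerate case explicitly, namely that $A$ and $B$ cannot both vanish with $L,M>0$. In Step 2 we discarded the terms $d_\lambda M^2$ and $c_\lambda L^2$ only in the direction that keeps them as a strict positive margin, so strictness is automatic. No geometric information about the cylinders beyond $a,b\ge0$ is needed, so the argument goes through even for cylinder intervals at different levels.
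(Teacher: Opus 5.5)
Your proof is correct and follows essentially the same route as the paper: both eliminate the positions $a,b$ by combining $A(I,J)\geq0$ and $B(I,J)\geq0$ to reach $\lambda^2>c_\lambda d_\lambda$, and then conclude via $\lambda^2-c_\lambda d_\lambda=(1-\lambda)(2\lambda^2+2\lambda-1)$. The only difference is cosmetic. The paper multiplies the two inequalities, which is why it must first check $a,b>0$ before dividing by $ab$. Your linear substitution instead gives $(\lambda^2-c_\lambda d_\lambda)\,bM>0$ directly, so your Step~1 positivity check is not even needed.
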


\begin{proof}
 Since \(I,J\subset[0,1]\), we have \(a,b\geq0\).
We first show that \(a,b>0\). If \(a=0\), then
\[B(I,J)=-d_\lambda(b+M)M<0,\]
contrary to the assumption \(B(I,J)\geq0\). Similarly, if \(b=0\),
then \[A(I,J)=-c_\lambda(a+L)L<0,\]
contrary to \(A(I,J)\geq0\).

Now the inequalities \(A(I,J)\geq0\) and \(B(I,J)\geq0\) give
\[
\lambda bM\geq c_\lambda(a+L)L,
\qquad
\lambda aL\geq d_\lambda(b+M)M.
\]
Multiplying these inequalities and cancelling $LM>0$ gives
	\[
	\lambda^2ab\geq c_\lambda d_\lambda(a+L)(b+M)>c_\lambda d_\lambda ab.
	\]
	Thus $\lambda^2>c_\lambda d_\lambda$. Since $\lambda^2-c_\lambda d_\lambda
	=(1-\lambda)(2\lambda^2+2\lambda-1)$, we obtain $\lambda>\lambda_*$. 
\end{proof}

\section{Hausdorff dimension of the circle intersection}\label{sec:dimension}

The double-covering argument in Section~\ref{sec:continuum}
establishes continuum cardinality, but it does not show that
\(E_\lambda\) has positive Hausdorff dimension. Since \(E_\lambda\)
is the graph of \(T\) over
$X_\lambda=K_\lambda\cap T(K_\lambda)$,
we reduce the dimension problem to this one-dimensional intersection
and study it using Newhouse thickness. The Hunt--Kan--Yorke theorem
gives positive Hausdorff dimension for
\(\lambda>\lambda_{\mathrm H}=\sqrt{2}-1\), while the quantitative
theorem of Falconer and Yavicoli yields explicit lower bounds near
\(\lambda=1/2\). In particular, these bounds imply
\(\dimH E_\lambda\to1\) as \(\lambda\uparrow1/2\), and a refinement
gives a two-sided first-order estimate for \(1-\dimH E_\lambda\).

\subsection{One-dimensional reduction and a general upper bound}

We record the following standard dimension reduction and an immediate upper bound for later use.
 
\begin{lem}\label{lem:dimension-reduction}
For $0<\lambda<1/2$,
\[
\dimH E_\lambda=\dimH X_\lambda.
\]
In particular, $\dimH E_\lambda\leq \dimH K_{\lambda}=\frac{\log2}{-\log\lambda}$.
\end{lem}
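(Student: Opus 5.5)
The plan is to use the two coordinate projections and the fact that Lipschitz maps do not increase Hausdorff dimension. Let $\pi_1(x,y)=x$ be the projection onto the first coordinate. It is $1$-Lipschitz and maps $E_\lambda$ onto $X_\lambda$, because $E_\lambda=\{(x,T(x)):x\in X_\lambda\}$. Hence $\dimH X_\lambda\le\dimH E_\lambda$.

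For the reverse inequality, I would lift $X_\lambda$ back to $E_\lambda$ via $x\mapsto(x,T(x))$. The difficulty is that $T$ is not Lipschitz at $x=1$, where $T'$ blows up. To handle this, I will split the circle into two closed arcs. Put
\[
E^{(1)}:=E_\lambda\cap\{x\le 1/\sqrt2\},\qquad E^{(2)}:=E_\lambda\cap\{y\le1/\sqrt2\}.
\]
Since $x^2+y^2=1$ forces $\min\{x,y\}\le1/\sqrt2$, these two sets cover $E_\lambda$.

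On $[0,1/\sqrt2]$ we have $\psi(x)\le1$, so $T$ is $1$-Lipschitz there. The graph map $x\mapsto(x,T(x))$ is therefore $\sqrt2$-Lipschitz on $X_\lambda\cap[0,1/\sqrt2]$, and it maps this set onto $E^{(1)}$. This gives $\dimH E^{(1)}\le\dimH X_\lambda$. For $E^{(2)}$, I will use the symmetry of $E_\lambda$ under interchanging the coordinates: this symmetry holds because $C_\lambda=K_\lambda\times K_\lambda$ and $S^1$ are both symmetric. The interchange is an isometry carrying $E^{(2)}$ onto $E^{(1)}$, so $\dimH E^{(2)}=\dimH E^{(1)}\le\dimH X_\lambda$. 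By countable (here finite) stability of Hausdorff dimension, $\dimH E_\lambda=\max\{\dimH E^{(1)},\dimH E^{(2)}\}\le\dimH X_\lambda$.

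The upper bound then follows from $X_\lambda\subset K_\lambda$ together with the dimension formula $\dimH K_\lambda=\log2/\log(1/\lambda)$ recalled in the introduction. The only step needing care is the non-Lipschitz behaviour of $T$ near $1$, and the splitting at $1/\sqrt2$ combined with the symmetry removes it. An alternative would be to note that $T$ is bi-Lipschitz on $[0,1-\epsilon]$ and take a countable union over $\epsilon$, but the splitting argument is shorter.
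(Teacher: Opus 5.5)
Your proof is correct, and your first inequality $\dimH X_\lambda\le\dimH E_\lambda$, via the first-coordinate projection, is exactly the paper's. For the reverse inequality you use a different decomposition.

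\begin{itemize}
\item The paper exhausts the arc toward the bad endpoint. It notes that $\Gamma(x)=(x,T(x))$ is Lipschitz on each $[0,1-1/m]$, with a constant that blows up as $m\to\infty$. It then writes $E_\lambda=\{(1,0)\}\cup\bigcup_{m\ge2}\Gamma(X_\lambda\cap[0,1-1/m])$ and invokes countable stability.
\item You cut at $1/\sqrt2$, where $\psi\le1$, so a single $\sqrt2$-Lipschitz graph map handles $E^{(1)}$. The coordinate swap, which preserves $E_\lambda$, carries $E^{(2)}$ onto $E^{(1)}$. You need only finite stability, and no exceptional point has to be added.
\end{itemize}

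The trade-off is generality. Your argument uses the swap symmetry of this particular set, which is equivalent to the $T$-invariance $T(X_\lambda)=X_\lambda$. The paper's exhaustion needs no symmetry: it proves $\dimH\{(x,T(x)):x\in X\}=\dimH X$ for every $X\subset[0,1]$, so it would apply verbatim to $(K\times K')\cap S^1$ with different factors.

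One small slip in your closing aside: $T$ itself is not bi-Lipschitz on $[0,1-\epsilon]$. Since $T'(0)=0$, one has $1-T(x)\sim x^2/2$. What is bi-Lipschitz there is the graph map $\Gamma$, and only its Lipschitz property is needed anyway.
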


\begin{proof}
Projection onto the first coordinate is Lipschitz, so
$\dimH X_\lambda\leq\dimH E_\lambda$. Conversely, $\Gamma(x)=(x,T(x))$ is Lipschitz on $[0,1-1/m]$ for $m\geq 2$, and 
\[
E_\lambda=\{(1,0)\}\cup\bigcup_{m=2}^{\infty}\Gamma(X_{\lambda}\cap [0,1-1/m]).
\]
Countable stability of Hausdorff dimension gives the reverse inequality. Since \(X_\lambda\subset K_{\lambda}\) and $\dimH K_{\lambda}=\frac{\log2}{-\log\lambda}$,
the upper bound follows.
\end{proof}

\subsection{Local thickness and positive dimension}

We first recall the definition of Newhouse thickness of Cantor sets on $\R$. By a Cantor set on $\R$ we mean a nonempty, perfect, totally disconnected compact subset of $\R$.  Let $F\subset\R$ be a Cantor set and let $(G_n)$ be an enumeration of the connected components of $\conv(F)\setminus F$. We call  $(G_n)$ a \emph{presentation} of $F$. Let $\widetilde{G}_n$ be the component of $\conv(F)\setminus\bigcup_{j<n}G_j$ that contains $G_n$, and let \(L_n\) and \(R_n\) be the two
closed components of \(\widetilde{G}_n\setminus G_n\). The thickness of $(G_n)$ is
\[
\tau((G_n)):=\inf_n\frac{\min\{|L_n|,|R_n|\}}{|G_n|}.
\]
Then the \emph{Newhouse thickness} of $F$, denoted by $\tau_N(F)$, is defined to be the supremum of $\tau((G_n))$  over all presentations. It is standard that this supremum is attained by a presentation in
which the gaps $G_n$ are ordered by nonincreasing length; see
\cite[Section~II.1]{Moreira1996}. Consequently,  this definition
agrees with the decreasing-gap definition used in
\cite[Definition~1]{FalconerYavicoli2022}. It is immediate from the definition that Newhouse thickness is invariant under similarities. Moreover, the standard decreasing-gap presentation of the
central Cantor set \(K_{\lambda}\) gives
\[
\tau_N(K_{\lambda})=\frac{\lambda}{1-2\lambda}.
\]

We will use the following elementary distortion estimate for
Newhouse thickness under \(C^1\) diffeomorphisms.

\begin{lem}\label{lem:tauN-distortion}
Let \(J\subset\R\) be an open interval, let $F\subset J$ be a Cantor set, and let $h:J\to h(J)$ be a monotone $C^1$ diffeomorphism. Suppose that 
\[
0<m_h\leq |h'(x)|\leq M_h<\infty
\qquad(x\in \conv F).
\]
Then
\begin{equation*}
\tau_N(h(F))\geq\frac{m_h}{M_h}\tau_N(F).
\end{equation*}
\end{lem}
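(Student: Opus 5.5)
The plan is to fix a presentation $(G_n)$ of $F$ and push it forward under $h$. Then we compare the ratio defining $\tau((G_n))$ gap by gap with the corresponding ratio for $h(F)$, using the mean value theorem.

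First, since $h$ is a monotone homeomorphism of $J$ onto $h(J)$ and $\conv F\subset J$, the image $h(F)$ is again a Cantor set. Moreover, $h$ maps $\conv F$ onto $\conv h(F)$, and it sends the components of $\conv F\setminus F$ bijectively onto the components of $\conv h(F)\setminus h(F)$. Hence $(h(G_n))$ is a presentation of $h(F)$.

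Next I would check that the construction of $\widetilde G_n$, $L_n$, $R_n$ commutes with $h$. The component of $\conv h(F)\setminus\bigcup_{j<n}h(G_j)$ containing $h(G_n)$ is $h(\widetilde G_n)$, because $h$ is a homeomorphism between the two sets and so preserves connected components. The two closed pieces of $h(\widetilde G_n)\setminus h(G_n)$ are $h(L_n)$ and $h(R_n)$; if $h$ is decreasing they appear in swapped order, but this does not affect the minimum.

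The key estimate comes from the mean value theorem. All intervals involved lie in $\conv F$, where $m_h\le |h'|\le M_h$. For any interval $I\subset\conv F$ this gives
\[
m_h|I|\le |h(I)|\le M_h|I|.
\]
For the closure of a gap, note that $\overline{G_n}\subset\conv F$, so the same bound applies. It follows that
\[
\frac{\min\{|h(L_n)|,|h(R_n)|\}}{|h(G_n)|}\ge \frac{m_h\min\{|L_n|,|R_n|\}}{M_h|G_n|}.
\]
Taking the infimum over $n$ yields $\tau((h(G_n)))\ge (m_h/M_h)\,\tau((G_n))$. Then
\[
\tau_N(h(F))\ge \tau((h(G_n))),
\]
and taking the supremum over all presentations of $F$ gives the claim.

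The only mildly delicate step is the bookkeeping that the image of $\widetilde G_n$ is exactly the corresponding component for the image presentation. This holds because $h$ is an order-preserving or order-reversing bijection, so it maps complements and connected components to the corresponding complements and components. No further obstacle is expected.
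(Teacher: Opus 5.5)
Your proposal is correct and follows essentially the same route as the paper. Both push the presentation $(G_n)$ forward to $(h(G_n))$, note that the parent intervals and bridges correspond up to left--right reversal, bound the gap ratios via the mean value theorem, and then take the infimum over $n$ followed by the supremum over presentations. Your extra checks, namely $\overline{G_n}\subset\conv F$ and that $h$ preserves the relevant connected components, just spell out details the paper states briefly.
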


\begin{proof}
	Fix a presentation \(\mathcal P=(G_n)\) of \(F\). Since \(h\) is a
	monotone homeomorphism,
	\[
	\mathcal P_h:=(h(G_n))
	\]
	is a presentation of \(h(F)\). Moreover, \(h\) maps the parent
	interval and the two bridges associated with \(G_n\) onto those
	associated with \(h(G_n)\), possibly reversing their left--right
	order.
	
	By the mean value theorem,
	\[
	|h(L_n)|\geq m_h|L_n|,
	\qquad
	|h(R_n)|\geq m_h|R_n|,
	\qquad
	|h(G_n)|\leq M_h|G_n|.
	\]
	Consequently,
	\[
	\frac{\min\{|h(L_n)|,|h(R_n)|\}}{|h(G_n)|}
	\geq
	\frac{m_h}{M_h}
	\frac{\min\{|L_n|,|R_n|\}}{|G_n|}.
	\]
	Taking the infimum over \(n\) and then the supremum over presentations proves the claim.
\end{proof}

Two Cantor sets $F_1,F_2\subset\R$ are called
\emph{interleaved} if each set meets the interior of the
convex hull of the other, that is,
\[
F_1\cap\interior(\conv F_2)\neq\varnothing,
\qquad
F_2\cap\interior(\conv F_1)\neq\varnothing;
\]
see \cite[Definition~2.5]{McDonaldTaylor2024}.

We will use the following theorem of Hunt, Kan, and Yorke.

\begin{thm}[Hunt--Kan--Yorke {\cite[Theorem~1]{HuntKanYorke1993}}]\label{thm:HKY}
Let $F_1,F_2$ be interleaved Cantor sets, and write their
 Newhouse thicknesses in decreasing order as $u\geq v>0$. If
\begin{equation}\label{eq:HKY-conditions}
u>1+\frac{3}{v}+\frac{1}{v^2},
\qquad
v>\frac{4}{u}+\frac{4}{u^2}+\frac{1}{u^3},
\end{equation}
then $F_1\cap F_2$ contains a Cantor set of positive Newhouse thickness.
\end{thm}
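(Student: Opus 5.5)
The statement is the theorem of Hunt, Kan and Yorke, which the paper quotes from \cite{HuntKanYorke1993} rather than proves. If I had to prove it myself, I would refine the Newhouse gap lemma quantitatively. The plain gap lemma says that interleaved Cantor sets with $uv\geq1$ intersect. Here I want more: a nested family of \emph{linked pairs} of bridges whose intersections form a Cantor set with controlled geometry. A linked pair is a bridge $B_1$ of $F_1$ and a bridge $B_2$ of $F_2$ (closed intervals from the decreasing-gap presentations) such that neither is contained in a gap of the other and their endpoints interleave.

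I would carry out the following steps in order.
\begin{enumerate}[label=(\arabic*)]
\item Fix presentations of $F_1,F_2$ realising thicknesses close to $u$ and $v$. Start from the linked pair $(\conv F_1,\conv F_2)$, which the interleaving hypothesis supplies.
\item Given a linked pair $(B_1,B_2)$, remove from whichever bridge is longer its largest gap $G$. Use the thickness of the other set to show that $G$ cannot swallow the shorter bridge: its length is at most $1/u$ (or $1/v$) times each adjacent sub-bridge. Hence some sub-bridge remains linked with the other set's bridge.
\item Show that after finitely many such refinements, with the number bounded in terms of $u,v$, one obtains \emph{two} disjoint linked sub-pairs. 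This is where the first inequality in \eqref{eq:HKY-conditions} should enter. The quantity $1+3/v+1/v^2$ is what one gets by requiring a bridge of the thicker set to straddle a gap of the thinner set together with the two adjacent bridges on either side. Straddling means that both sides of the gap still carry a linked pair.
\item Check that the ratio of the surviving intervals to the excised gap between them is bounded below. This uses the second inequality, $v>4/u+4/u^2+1/u^3$, which plays the symmetric role for the thinner set. It yields a uniform lower bound $c(u,v)>0$ for the resulting ratio.
\item Iterate. The nested unions of linked pairs shrink to a Cantor set contained in $F_1\cap F_2$. The presentation whose gaps are exactly the ones created at successive splittings has thickness at least $c(u,v)$.
\end{enumerate}

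The main obstacle is step (3) together with step (4): proving a splitting lemma that produces two linked sub-pairs with \emph{proportional} sizes, rather than just one, as in the gap lemma. Linked sub-pairs can degenerate, with one sub-bridge tiny compared with its neighbour. I would handle this with a case analysis on where the largest gap of the longer bridge falls relative to the other set's largest gap. In each case I would bound the lengths of the four resulting bridges by repeated use of the thickness inequality $|\text{bridge}|\geq \tau\,|\text{gap}|$. That case analysis is where the specific polynomial expressions in $u$ and $1/v$ (and in $v$ and $1/u$) of \eqref{eq:HKY-conditions} should appear. Once the splitting lemma holds with uniform constants, the remaining steps are routine.
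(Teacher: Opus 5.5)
The paper does not prove this theorem. It imports it from Hunt, Kan and Yorke and uses it only as a black box in the proof of Theorem~\ref{thm:intersection}(iii), so there is no internal argument to compare with, and quoting the result is legitimate.

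As a proof, however, your proposal has a genuine gap. Steps (3)--(4) are not one step among several; they are the whole content of the theorem, and you assert them rather than prove them. What is needed is a splitting lemma: from one interleaved pair, produce two disjoint interleaved sub-pairs, each with length bounded below by a fixed multiple of the gap between them. You give no configuration analysis and derive no inequality. The claims that $1+3/v+1/v^2$ comes from a ``straddling'' requirement and that $4/u+4/u^2+1/u^3$ yields a uniform ratio $c(u,v)$ are only stated as things that ``should'' happen. These expressions equal $(1+1/v)^2+1/v$ and $(2+1/u)^2/u$. A proof would have to exhibit the extremal configurations that force exactly these thresholds, and the sketch does not attempt this.

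Step (2) does not work as written.
\begin{itemize}
\item The bound $|G|\leq\min\{|L|,|R|\}/u$ comes from the thickness of the set that owns $G$, so it does not compare $G$ with the other bridge.
\item Under your own hypothesis that neither bridge lies in a gap of the other, $B_2\not\subset G$ is automatic anyway. The real question is whether $L$ or $R$, possibly paired with a sub-bridge of $B_2$, is again linked. That is the gap-lemma descent through linked gaps, which uses $uv\geq1$.
\item Your invariant that the endpoints interleave is not inherited. After $G$ is removed, the surviving sub-bridge can lie entirely inside the other bridge. The induction must therefore carry the weaker property that the two pieces are interleaved Cantor sets.
\end{itemize}

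Step (5) is not routine either. The gaps of the limit set are not the gaps ``created at successive splittings''. Whenever only one of $L,R$ stays linked in step (2), an end portion of the current piece is discarded. When that happens, the true gap of the limit set next to that end is larger than the designed one, and the true adjacent bridge is shorter. As a result, no lower bound on thickness follows. Your splitting lemma must also guarantee that the retained sub-pairs reach, or come proportionally close to, both ends of each piece. This endpoint control is a substantive part of any proof, not an afterthought.
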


In the symmetric case \(u=v\),  the two inequalities in
\eqref{eq:HKY-conditions} are equivalent to
\[
(u+1)(u^2-2u-1)>0
\quad\text{and}\quad
(u+1)^2(u^2-2u-1)>0,
\]
respectively. Hence,  they hold simultaneously
if and only if \(u>1+\sqrt{2}\). 
Also note that 
\begin{equation}\label{eq:sqrt2m1}
\frac{\lambda}{1-2\lambda}>1+\sqrt{2}
\quad\Longleftrightarrow\quad
\lambda>\sqrt{2}-1=\lambda_{\mathrm H}.
\end{equation}
To convert the positive-thickness conclusion of Theorem~\ref{thm:HKY} into a Hausdorff-dimension statement, we use the standard estimate
\begin{equation}\label{eq:thickness-dimension}
\dimH F\geq\frac{\log2}{\log(2+1/\tau_N(F))}>0
\end{equation}
for every Cantor set \(F\) with $\tau_N(F)>0$; see, for example, \cite[p.~77]{PalisTakens1993}.

We now localize near a point $(x_0,y_0)\in E_\lambda$
whose coordinates are not cylinder endpoints.
The localized Cantor sets, after applying $T$ to the second,
remain interleaved, while the derivative distortion of $T$
on shrinking cylinder intervals containing $y_0$ tends to one.

\begin{lem}\label{lem:local-cylinders}
Let $(x_0,y_0)\in E_\lambda$, and suppose neither coordinate is a cylinder endpoint. Let $I_n,J_n$ be the unique level-$n$ cylinder intervals containing $x_0,y_0$, respectively.  Define
\[
C_n:=K_{\lambda}\cap I_n,
\qquad
D_n:=K_{\lambda}\cap J_n.
\]
Then:
\begin{enumerate}[label=\textup{(\roman*)}]
\item $C_n$ and $T(D_n)$ are interleaved;
\item $\tau_N(C_n)=\tau_N(D_n)=\lambda/(1-2\lambda)$;
\item for every $\theta\in(0,1)$ and all sufficiently large $n$,
\begin{equation*}
\tau_N(T(D_n))\geq\theta\frac{\lambda}{1-2\lambda}.
\end{equation*}
\end{enumerate}
\end{lem}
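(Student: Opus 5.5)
Since neither coordinate is a cylinder endpoint, $x_0$ lies in the interior of every $I_n$ and $y_0$ in the interior of every $J_n$; in particular $x_0,y_0\in(0,1)$ (the points $0,1$ are endpoints of $I_\varnothing$). So $x_0\in\interior I_n$, $y_0\in\interior J_n$, and for large $n$ the interval $J_n$ is contained in a compact subinterval of $(0,1)$. We prove the three parts in order (ii), (iii), (i).

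For (ii), we note that $C_n=K_{\lambda}\cap I_n=f_w(K_\lambda)$ for the level-$n$ word $w$ with $I_n=I_w$, since $K_w=K_\lambda\cap I_w$. Thus $C_n$ is a similar copy of $K_\lambda$, and similarity invariance gives $\tau_N(C_n)=\lambda/(1-2\lambda)$; the same holds for $D_n$.

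For (iii), we apply Lemma~\ref{lem:tauN-distortion} with $h=T$ on an open interval $J\Subset(0,1)$ containing $J_n$ (valid for $n$ large). On $(0,1)$ the map $T$ is a decreasing $C^1$ diffeomorphism with $|T'|=\psi>0$ continuous. Hence on $\conv D_n= J_n$, whose length is $\lambda^n\to0$ and which contains $y_0$, we have $m_T/M_T\geq \min_{J_n}\psi/\max_{J_n}\psi\to1$ by uniform continuity of $\psi$ near $y_0$. For large $n$ this ratio is at least $\theta$, and the lemma gives the bound.

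For (i), the main point is that $x_0=T(y_0)$, because $(x_0,y_0)\in S^1$ with both coordinates nonnegative. We check the two interleaving conditions. First, $x_0\in C_n$ and $x_0\in\interior I_n=\interior\conv C_n$. Second, $\conv T(D_n)=T(J_n)$ since $T$ is a monotone homeomorphism, and $T$ is open on $(0,1)$, so $x_0=T(y_0)\in T(\interior J_n)=\interior T(J_n)$. Hence $C_n\cap\interior\conv T(D_n)\ni x_0$. Symmetrically, $x_0=T(y_0)\in T(D_n)$ and $x_0\in\interior\conv C_n$. So both sets contain the common point $x_0$ in the interior of the other's hull.

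Finally, $T(D_n)$ is a Cantor set because it is a homeomorphic image of one. The only mild obstacle is the bookkeeping that non-endpoint coordinates stay in the interiors and away from $0,1$, where $\psi$ degenerates; this is handled by the opening observation.
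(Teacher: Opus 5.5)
Your proof is correct and takes essentially the same approach as the paper. Part (i) uses that $x_0=T(y_0)$ lies in $C_n\cap T(D_n)$ and in the interiors of both hulls $I_n$ and $T(J_n)$, part (ii) is similarity invariance, and part (iii) is Lemma~\ref{lem:tauN-distortion} with $\min_{J_n}\psi/\max_{J_n}\psi\to1$. Your extra remarks (that $x_0,y_0\in(0,1)$ because $0$ and $1$ are endpoints of $I_\varnothing$, and that $T(D_n)$ is a Cantor set) only make explicit details the paper leaves implicit.
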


\begin{proof}
Since $x_0,y_0$ are not cylinder endpoints, $x_0\in \interior I_n$ and $y_0\in \interior J_n$. Note that  $\conv C_n=I_n$ and $\conv D_n=J_n$. Moreover,
since  \(T\) is continuous and strictly decreasing on \([0,1]\), $\conv T(D_n)=T(J_n)$.
Hence \(x_0=T(y_0)\) belongs to $C_n\cap T(D_n)$ and to the interiors of their convex hulls, proving (i). Since $C_n$ and 
$D_n$ are similar copies of $K_{\lambda}$, (ii) follows by 
similarity invariance of Newhouse thickness.

Since \(y_0\in(0,1)\), we have $J_n\subset (0,1)$ for all sufficiently large $n$.  Since $\psi=|T'|$ 
is positive and continuous near $y_0$, while $\diam J_n\to0$,
\[
\frac{\min_{J_n}\psi}{\max_{J_n}\psi}\longrightarrow1.
\]
Lemma~\ref{lem:tauN-distortion}
now gives
$\tau_N(T(D_n))\geq(\min_{J_n}\psi/\max_{J_n}\psi)\tau_N(D_n)$,
which proves (iii).
\end{proof}

\begin{proof}[Proof of Theorem~\ref{thm:intersection}(iii)]
Since $\lambda>\lambda_{\mathrm H}>\lambda_*$, Theorem~\ref{thm:intersection}(ii) shows that $E_\lambda$ has the cardinality of the continuum. The set of cylinder endpoints is countable, and for each prescribed endpoint coordinate the first-quadrant unit circle determines at most one other coordinate. Hence there exists $(x_0,y_0)\in E_\lambda$ with neither coordinate a cylinder endpoint.

Set
\[
U:=\frac{\lambda}{1-2\lambda}.
\]
By \eqref{eq:sqrt2m1}, $U>1+\sqrt{2}$, so both inequalities in \eqref{eq:HKY-conditions} are satisfied at $(u,v)=(U,U)$. Choose $v_0\in(0,U)$ close enough to $U$ that
\[
U>1+\frac{3}{v_0}+\frac{1}{v_0^2},
\qquad
v_0>\frac{4}{U}+\frac{4}{U^2}+\frac{1}{U^3}.
\]
Put $\theta=v_0/U$. Choose sufficiently large $n$ that Lemma~\ref{lem:local-cylinders}(iii) holds and $I_n, J_n\subset (0,1)$. If $u\geq v$ are the thicknesses of  $C_n,T(D_n)$, then $u\geq U$ and $v\geq v_0$. The right-hand sides in \eqref{eq:HKY-conditions} decrease as the other thickness increases, so Theorem~\ref{thm:HKY} applies. Thus $C_n\cap T(D_n)$ contains a Cantor set $F$ of positive thickness. By \eqref{eq:thickness-dimension}, $\dimH F>0$. Since  $x\mapsto(x,T(x))$ is bi-Lipschitz on $I_n$, its image of $F$ is contained in $E_\lambda$, proving the claim.
\end{proof}

\begin{rem}
At \(\lambda=\lambda_{\mathrm H}=\sqrt{2}-1\), one has
\[
U=\frac{\lambda}{1-2\lambda}=1+\sqrt{2}.
\]
Thus the symmetric pair \((u,v)=(U,U)\) lies exactly on the
boundary of both inequalities in \eqref{eq:HKY-conditions}.
Consequently, the preceding argument does not yield positive Hausdorff
dimension at this endpoint.
\end{rem}

\subsection{A quantitative lower bound near \texorpdfstring{$1/2$}{1/2}}

We use the two-set, one-dimensional specialization of the quantitative
intersection theorem of Falconer and Yavicoli
\cite[Theorem~6]{FalconerYavicoli2022}. For $d=1$, their  thickness agrees with Newhouse thickness, and the constants in
\cite[Definition~5]{FalconerYavicoli2022} become $K_1=96\log 16$ and $K_2=186624$.
For the finite family $\{F_1,F_2\}$ of Cantor sets in $\R$, the uniform boundedness condition
on the diameters is automatic, and every nondegenerate closed
interval $
B\subset\conv F_1\cap\conv F_2$
is a closed ball disjoint from the unbounded components of
$\R\setminus F_1$ and $\R\setminus F_2$. Thus the geometric hypotheses of \cite[Theorem~6]{FalconerYavicoli2022} are satisfied, and the resulting specialization is as follows.

\begin{thm}[Falconer--Yavicoli {\cite[Theorem~6]{FalconerYavicoli2022}}]\label{thm:FY}
Let $F_1,F_2\subset\R$ be Cantor sets of Newhouse thicknesses $\tau_1,\tau_2>0$. Let $B$ be a nondegenerate closed interval satisfying
\[
B\subset\conv F_1\cap\conv F_2.
\]
Put
\[
\Delta:=\max\{\diam F_1,\diam F_2\},
\qquad
\beta:=\min\left\{\frac{1}{4},\frac{|B|}{\Delta}\right\}.
\]
If there exists $c\in(0,1)$ such that
\begin{equation*}
Q:=\tau_1^{-c}+\tau_2^{-c}
\leq\frac{\beta^c(1-\beta^{1-c})}{186624},
\end{equation*}
then
\begin{equation*}
\dimH(B\cap F_1\cap F_2)
\geq1-\frac{96\log16}{\beta|\log\beta|}Q^{1/c}.
\end{equation*}
\end{thm}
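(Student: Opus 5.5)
The statement is a specialization of \cite[Theorem~6]{FalconerYavicoli2022} rather than a new result, so the plan is to check that each hypothesis and constant of that theorem becomes the corresponding item above when $d=1$ and the family is $\{F_1,F_2\}$. For a countable family of compact sets $E_i\subset\R^d$ with thicknesses $\tau(E_i)>0$, the general theorem assumes uniformly bounded diameters and a closed ball $B$ that meets every $E_i$ and avoids the unbounded components of each complement. Under a summability condition $\sum_i\tau(E_i)^{-c}\le \beta^c(1-\beta^{1-c})/K_2$, it concludes $\dimH(B\cap\bigcap_iE_i)\ge d-\frac{K_1}{\beta|\log\beta|}\bigl(\sum_i\tau(E_i)^{-c}\bigr)^{1/c}$. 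Here $\beta$ is the minimum of a fixed constant and the ratio of $|B|$ to the largest diameter.

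\textbf{Step 1 (thickness).} Show that for $d=1$ the Falconer--Yavicoli thickness of a compact set agrees with the decreasing-gap form of Newhouse thickness. This is the remark already made before Lemma~\ref{lem:tauN-distortion}: the supremum over presentations is attained by the presentation with gaps in nonincreasing length \cite{Moreira1996}. Hence $\tau_i=\tau_N(F_i)$.

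\textbf{Step 2 (constants).} Evaluate the constants of \cite[Definition~5]{FalconerYavicoli2022} at $d=1$ and confirm $K_1=96\log16$ and $K_2=186624$. Also check that the cap in $\beta$ is $1/4$ and that the normalization of $\beta$ is $|B|$ divided by the maximal diameter $\Delta$.

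\textbf{Step 3 (geometry).} For two sets, the uniform diameter bound is automatic. A nondegenerate closed interval $B\subset\conv F_1\cap\conv F_2$ is a closed ball in $\R$. It lies inside each bounded hull, so it misses the two unbounded components of $\R\setminus F_i$. It meets each $F_i$, since otherwise $B$ would lie in a single bounded gap of $F_i$; if their version requires this, it is either implied by the other hypotheses or can be imposed by shrinking $B$.

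\textbf{Step 4 (substitution).} With these in place, substitute into their inequality, taking the sum over two terms as $Q$, to obtain exactly the displayed hypothesis and lower bound $1-\frac{96\log16}{\beta|\log\beta|}Q^{1/c}$.

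The main obstacle is Step 2 together with the normalization of $\beta$. If their ball is parametrized by radius rather than diameter, $\beta$ and $K_1$ change by a factor of $2$. So I would first reread their definition of $\beta$ and redo the $d$-dependent constant computation at $d=1$ before accepting the displayed numbers. Any discrepancy only alters absolute constants, and hence the numerical constant $2102$ later.
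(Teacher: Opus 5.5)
Your plan matches what the paper does: it also just checks that for $d=1$ the Falconer--Yavicoli thickness is Newhouse thickness, that their constants become $K_1=96\log16$ and $K_2=186624$, that the uniform diameter bound is automatic for two sets, and that any nondegenerate closed interval $B\subset\conv F_1\cap\conv F_2$ is a closed ball avoiding the unbounded components of $\R\setminus F_i$, and then substitutes. One small point on your aside about $B$ meeting each $F_i$: the paper does not use this, and as written it is not an immediate contradiction. If it were needed, it follows from the thickness hypothesis: $B$ inside a gap of $F_i$ forces $\tau_i<\Delta/(2|B|)$, hence $Q\ge(2\beta)^c$, which violates the assumed bound. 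Shrinking $B$ would not do, since that changes $\beta$.
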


Consider the level-three cylinder $K_{101}=f_{101}(K_{\lambda})$, and write 
$\conv K_{101}=[a_\lambda,b_\lambda]$,
where
\begin{equation}\label{eq:a-b-lambda}
a_\lambda:=1-\lambda+\lambda^2-\lambda^3,
\qquad
b_\lambda:=1-\lambda+\lambda^2.
\end{equation}
The word $101$ is chosen because, as \(\lambda \uparrow 1/2\), its cylinder interval approaches \([5/8, 3/4]\). This interval is compactly contained in $(0,1)$ and has a nondegenerate
overlap with \(T([5/8, 3/4])\).

Recall that $\psi=|T'|$ on $(0,1)$. 
Define
\begin{equation}\label{eq:r-V}
r_\lambda:=\frac{\psi(a_\lambda)}{\psi(b_\lambda)},
\qquad
V_\lambda:=r_\lambda\frac{\lambda}{1-2\lambda}.
\end{equation}
Since $\psi$ is increasing on $(0,1)$, Lemma~\ref{lem:tauN-distortion} gives
\begin{equation*}
\tau_N(K_{101})=U_\lambda:=\frac{\lambda}{1-2\lambda},
\qquad
\tau_N(T(K_{101}))\geq V_\lambda.
\end{equation*}

The convex hulls of $K_{101}$ and $T(K_{101})$ are $[a_\lambda,b_\lambda]$ and $[T(b_\lambda),T(a_\lambda)]$, respectively. Set
\begin{equation*}
	\ell_\lambda:=
	\min\{b_\lambda,T(a_\lambda)\}
	-\max\{a_\lambda,T(b_\lambda)\}.
\end{equation*}
Whenever \(\ell_\lambda>0\), define the nondegenerate closed interval
\begin{equation*}
B_\lambda:=
[\max\{a_\lambda,T(b_\lambda)\},
\min\{b_\lambda,T(a_\lambda)\}].
\end{equation*}
For such \(\lambda\), one has
\[
B_\lambda
=
\conv K_{101}
\cap
\conv T(K_{101}),
\qquad
|B_\lambda|=\ell_\lambda.
\]
Moreover,
\[
\diam K_{101}=\lambda^3,
\qquad
\diam T(K_{101})
=T(a_\lambda)-T(b_\lambda).
\]
Put
\[\Delta_\lambda:=\max\bigl\{
\diam K_{101},
\diam T(K_{101})
\bigr\}=\max\{\lambda^3,T(a_\lambda)-T(b_\lambda)\},\]
and 
\[\beta_\lambda:=\min\left\{\frac{1}{4},\frac{\ell_\lambda}{\Delta_\lambda}\right\}, \qquad Q_{\lambda,c}:=U_\lambda^{-c}+V_\lambda^{-c}, \quad c\in (0,1).\]
If \(\tau_1=\tau_N(K_{101})\) and
\(\tau_2=\tau_N(T(K_{101}))\), then
\[
\tau_1^{-c}+\tau_2^{-c}
\leq U_\lambda^{-c}+V_\lambda^{-c}
=Q_{\lambda,c}.
\]
Consequently, if
\begin{equation}\label{eq:FY-sufficient}
Q_{\lambda,c}
\leq\frac{\beta_\lambda^c(1-\beta_\lambda^{1-c})}{186624},
\end{equation}
then the hypotheses of Theorem~\ref{thm:FY} are satisfied. Thus Theorem~\ref{thm:FY} yields
\begin{equation}\label{eq:dim-lower-general}
\dimH E_\lambda
\geq1-\frac{96\log16}{\beta_\lambda|\log\beta_\lambda|}
Q_{\lambda,c}^{1/c}.
\end{equation}
Here $B_\lambda\cap K_{101}\cap T(K_{101})\subset X_\lambda$, so Lemma~\ref{lem:dimension-reduction} applies.

To make \eqref{eq:FY-sufficient} and
\eqref{eq:dim-lower-general} explicit, we next obtain uniform
estimates for \(\beta_\lambda\) and \(r_\lambda\) on
\(0.49\leq\lambda<1/2\).

\begin{lem}\label{lem:nearonehalf}
If $0.49\leq\lambda<1/2$, then  \(\ell_\lambda>0\) and
\begin{equation*}
\beta_\lambda=\frac{1}{4},
\qquad
r_\lambda\geq\rho_*:=\frac{15}{4\sqrt{39}}.
\end{equation*}
\end{lem}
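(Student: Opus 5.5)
The plan is to get every estimate from monotonicity of the endpoint functions $a_\lambda,b_\lambda$ in $\lambda$ on $[0.49,1/2]$, together with monotonicity of $T$ (decreasing) and $\psi$ (increasing). Everything then reduces to comparing a few numbers at $\lambda=0.49$ and $\lambda=1/2$.

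\emph{Step 1: monotonicity and endpoint values.} We have $b_\lambda'=-1+2\lambda<0$ and $a_\lambda'=-1+2\lambda-3\lambda^2<0$ on $(0,1/2)$, so both functions decrease. Hence for $0.49\leq\lambda<1/2$,
\[
\tfrac58=a_{1/2}<a_\lambda\leq a_{0.49}\approx0.6325,\qquad \tfrac34=b_{1/2}<b_\lambda\leq b_{0.49}=0.7501.
\]
Since $T$ is decreasing, this gives
\[
T(b_\lambda)<T(3/4)=\tfrac{\sqrt7}{4}\approx0.6614,\qquad T(a_\lambda)\geq T(a_{0.49})\approx0.7746,\qquad T(a_\lambda)<T(5/8)=\tfrac{\sqrt{39}}{8}\approx0.7806.
\]
Also $T(b_\lambda)\geq T(b_{0.49})\approx0.6613$. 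I will state these endpoint values as explicit rational or square-root bounds, for instance $a_{0.49}<0.633$, $T(a_{0.49})>0.774$ and $T(b_{0.49})>0.661$, each checked by squaring.

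\emph{Step 2: the overlap length and $\beta_\lambda$.} From Step 1,
\[
\max\{a_\lambda,T(b_\lambda)\}<\max\{0.633,\sqrt7/4\}<0.662,\qquad \min\{b_\lambda,T(a_\lambda)\}>\min\{3/4,0.774\}=3/4.
\]
So $\ell_\lambda>0.088>0$, and $B_\lambda$ is well defined. For $\Delta_\lambda$, note that $\lambda^3<1/8$, while $T(a_\lambda)-T(b_\lambda)<\sqrt{39}/8-0.661<0.12$. Hence $\Delta_\lambda<1/8$, and therefore $\ell_\lambda/\Delta_\lambda>0.088\cdot 8>1/4$. This gives $\beta_\lambda=1/4$.

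\emph{Step 3: the distortion ratio $r_\lambda$.} Since $\psi$ is increasing, $\psi(a_\lambda)>\psi(5/8)=5/\sqrt{39}$. Because $b_\lambda\leq0.7501<4/5$, we get $\psi(b_\lambda)<\psi(4/5)=4/3$. Therefore
\[
r_\lambda>\frac{5/\sqrt{39}}{4/3}=\frac{15}{4\sqrt{39}}=\rho_*.
\]
The constant $\rho_*$ is evidently built exactly from these two comparison values, which confirms this choice. The sharper value $\psi(3/4)$ is not needed.

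The only real obstacle is bookkeeping. Each numerical comparison at $\lambda=0.49$, such as $T(a_{0.49})>3/4$ and $T(a_\lambda)-T(b_\lambda)<1/8$, must be stated as a clean inequality verified by squaring, rather than left as a decimal approximation. The margins are comfortable (about $0.005$ in the $\Delta_\lambda$ bound and much larger elsewhere), so crude rational bounds suffice.
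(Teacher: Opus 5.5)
Your proof is correct and follows essentially the paper's route. Both arguments rest on the monotonicity of $a_\lambda$, $b_\lambda$, $T$ and $\psi$ plus a few numerical comparisons, and your bound $r_\lambda\geq\psi(5/8)/\psi(4/5)=\rho_*$ is identical to the paper's. The only cosmetic difference is in $\beta_\lambda$: the paper first identifies $B_\lambda=[T(b_\lambda),b_\lambda]$ via $a_\lambda^2+b_\lambda^2<1$, then compares $\ell_\lambda\geq(3-\sqrt7)/4>1/16$ with $\Delta_\lambda<1/4$, whereas you bound the four endpoints directly and use the sharper $\Delta_\lambda<1/8$.
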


\begin{proof}
The identity
\[
b_\lambda=\frac{3}{4}+\left(\frac{1}{2}-\lambda\right)^2
\]
gives
\[
\frac{3}{4}\leq b_\lambda\leq\frac{7501}{10000}<\frac{19}{25}<\frac{4}{5}.
\]
Also $a_\lambda'=-1+2\lambda-3\lambda^2<0$, so
\[
\frac{5}{8}=a_{1/2}\leq a_\lambda\leq a_{0.49}=\frac{632451}{10^6}<\frac{16}{25}.
\]
It follows that
\[
a_\lambda^2+b_\lambda^2
<
\left(\frac{16}{25}\right)^2
+
\left(\frac{19}{25}\right)^2
=
\frac{617}{625}<1.
\]
Hence $T(b_\lambda)>a_\lambda$ and $T(a_\lambda)>b_\lambda$.
Since $b_\lambda\geq3/4>1/\sqrt{2}$, we also have $b_\lambda>T(b_\lambda)$.  Consequently,
\[
\ell_\lambda=b_\lambda-T(b_\lambda)>0, \qquad B_\lambda=[T(b_\lambda),b_\lambda].
\]
The function $x\mapsto x-T(x)$ is increasing on $(0,1)$. Therefore,
\[
\ell_\lambda\geq
\frac{3}{4}-T\left(\frac{3}{4}\right)=\frac{3-\sqrt{7}}{4}>\frac{1}{16}.
\]
On the other hand,
\[
\lambda^3<\frac{1}{8}<\frac{1}{4},
\qquad
T(a_\lambda)-T(b_\lambda)
\leq
T\left(\frac{5}{8}\right)-T\left(\frac{4}{5}\right)=\frac{\sqrt{39}}{8}-\frac{3}{5}<\frac{1}{4}.
\]
Hence $\Delta_\lambda<1/4$ and $\ell_\lambda/\Delta_\lambda>1/4$, proving $\beta_\lambda=1/4$.

Finally, since \(\psi\) is increasing, $a_\lambda\geq5/8$, and 
$b_\lambda<4/5$,
we obtain
\[
r_\lambda
\geq\frac{\psi(5/8)}{\psi(4/5)}
=\frac{5/\sqrt{39}}{4/3}
=\frac{15}{4\sqrt{39}}.
\]
\end{proof}

We next derive two explicit numerical estimates needed to verify
\eqref{eq:FY-sufficient} and to obtain the constant in the
dimension bound in Theorem~\ref{thm:intersection}(iv).

Set
\[
c_0:=\frac{25}{26},
\qquad
\theta_0:=\rho_*^{-c_0}
=\left(\frac{4\sqrt{39}}{15}\right)^{25/26},
\qquad
\sigma_0:=4^{c_0-1}=4^{-1/26}.
\]
A direct calculation gives
\begin{align}
	4\left[
	\frac{186624(1+\theta_0)}{1-\sigma_0}
	\right]^{26/25}
	&<7.1972\times 10^7<
	7.2\times 10^7,
	\label{eq:1theta01sigma0}\\
	768(1+\theta_0)^{26/25}
	&<2101.974<2102.
	\label{eq:768theta0}
\end{align}

\begin{proof}[Proof of Theorem~\ref{thm:intersection}(iv)]
Suppose that $1/2-3\times10^{-9}\leq\lambda<1/2$. 
By Lemma~\ref{lem:nearonehalf}, $\beta_\lambda=1/4$ and $V_\lambda\geq\rho_*U_\lambda$. Taking $c=c_0=25/26$, we obtain
\begin{equation}\label{eq:Qc0-bound}
Q_{\lambda,c_0}
\leq U_\lambda^{-c_0}(1+\theta_0).
\end{equation}
Since $\beta_\lambda=1/4$, condition \eqref{eq:FY-sufficient} becomes
\[
Q_{\lambda,c_0}
\leq\frac{4^{-c_0}(1-\sigma_0)}{186624}.
\]
In view of \eqref{eq:Qc0-bound}, it is enough that
\begin{equation}\label{eq:U-sufficient}
U_\lambda
\geq4\left[\frac{186624(1+\theta_0)}{1-\sigma_0}\right]^{1/c_0}.
\end{equation}
Since $U_{\lambda}$ is strictly increasing on \((0,1/2)\) and  $\lambda\geq1/2-3\times10^{-9}=499999997/10^9$,
\[
U_\lambda=\frac{\lambda}{1-2\lambda}
\geq\frac{499999997}{6}>
7.2\times10^7.
\]
Thus \eqref{eq:U-sufficient} follows from \eqref{eq:1theta01sigma0}.

Using \eqref{eq:dim-lower-general} and
\[
\frac{96\log16}{(1/4)|\log(1/4)|}=768,
\]
we obtain
\begin{align*}
\dimH E_\lambda
&\geq1-768Q_{\lambda,c_0}^{1/c_0}\\
&\geq1-768(1+\theta_0)^{26/25}U_\lambda^{-1}\\
&\geq1-2102U_\lambda^{-1},
\end{align*}
where the last step is \eqref{eq:768theta0}. Since $U_\lambda^{-1}=(1-2\lambda)/\lambda$, we conclude that
\[
\dimH E_\lambda
\geq
1-2102\frac{1-2\lambda}{\lambda}.
\]

Finally, since \(E_\lambda\subset S^1\), one has
\(\dimH E_\lambda\leq1\). Letting
\(\lambda\uparrow1/2\) in the preceding lower bound therefore gives $\lim_{\lambda\uparrow1/2}\dimH E_\lambda=1$. This completes the proof. 
\end{proof}

We conclude this section with a sharper two-sided first-order estimate.
 
\begin{thm}\label{thm:dim-asy}
As $\lambda\uparrow1/2$,
\begin{equation}\label{eq:dim2sided}
\frac{1}{\log2}
\leq\liminf_{\lambda\uparrow1/2}
\frac{1-\dimH E_\lambda}{1-2\lambda}
\leq\limsup_{\lambda\uparrow1/2}
\frac{1-\dimH E_\lambda}{1-2\lambda}
\leq1536\left(1+\frac{3\sqrt{39}}{5\sqrt{7}}\right).
\end{equation}
\end{thm}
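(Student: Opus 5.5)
The lower bound in \eqref{eq:dim2sided} comes directly from the trivial upper bound in Lemma~\ref{lem:dimension-reduction}. The upper bound comes from rerunning the Falconer--Yavicoli argument of Theorem~\ref{thm:intersection}(iv) with the exponent $c=c(\lambda)\uparrow 1$ instead of the fixed $c_0=25/26$, and with the ratio $r_\lambda$ replaced by its limiting value.

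\emph{Lower bound.} By Lemma~\ref{lem:dimension-reduction}, $1-\dimH E_\lambda\geq 1-\frac{\log 2}{\log(1/\lambda)}$. Write $\varepsilon:=\log(1/(2\lambda))=-\log(1-(1-2\lambda))$. Then
\[
1-\frac{\log2}{\log2+\varepsilon}=\frac{\varepsilon}{\log2+\varepsilon},
\]
and $\varepsilon/(1-2\lambda)\to1$ as $\lambda\uparrow1/2$. Dividing by $1-2\lambda$ and letting $\lambda\uparrow1/2$ gives $\liminf\geq 1/\log 2$.

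\emph{Upper bound.} For $0.49\leq\lambda<1/2$ we work with $K_{101}$ and $T(K_{101})$ exactly as before. Lemma~\ref{lem:nearonehalf} gives $\beta_\lambda=1/4$ and $\ell_\lambda>0$.

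First, I will replace the crude bound $r_\lambda\geq\rho_*$ by its limit. As $\lambda\uparrow1/2$ we have $a_\lambda\to5/8$ and $b_\lambda\to3/4$. By continuity of $\psi$,
\[
r_\lambda\to r_{1/2}:=\frac{\psi(5/8)}{\psi(3/4)}=\frac{5/\sqrt{39}}{3/\sqrt7},
\qquad\text{so}\qquad \frac{1}{r_{1/2}}=\frac{3\sqrt{39}}{5\sqrt7}.
\]

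Next, I choose $c=c_\lambda:=1-(\log U_\lambda)^{-1/2}$, so that $c_\lambda\uparrow1$ but only slowly. Condition \eqref{eq:FY-sufficient} then reads
\[
U_\lambda^{-c}\bigl(1+r_\lambda^{-c}\bigr)\leq \frac{4^{-c}\bigl(1-4^{c-1}\bigr)}{186624}.
\]
The right side is at least a constant times $(1-c)\log4\asymp(\log U_\lambda)^{-1/2}$. The left side is at most $U_\lambda^{-1}\cdot e^{\sqrt{\log U_\lambda}}\cdot 2\rho_*^{-1}$, which is far smaller. Hence \eqref{eq:FY-sufficient} holds for $\lambda$ close to $1/2$.

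Then \eqref{eq:dim-lower-general}, with the constant $768$ that comes from $\beta=1/4$, gives
\[
1-\dimH E_\lambda\leq 768\,Q_{\lambda,c}^{1/c}\leq 768\,U_\lambda^{-1}\bigl(1+r_\lambda^{-c}\bigr)^{1/c},
\]
where the factor $U_\lambda^{-1}$ arises because $(U_\lambda^{-c})^{1/c}=U_\lambda^{-1}$ exactly. Since $U_\lambda^{-1}/(1-2\lambda)=1/\lambda\to2$ and $(1+r_\lambda^{-c})^{1/c}\to 1+1/r_{1/2}$, the limsup is at most $1536\bigl(1+\frac{3\sqrt{39}}{5\sqrt7}\bigr)$.

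The main obstacle is the balancing of $c$. It must tend to $1$ so that the factor $(\cdot)^{1/c}$ does not inflate the constant. At the same time $1-c$ must not shrink faster than $U_\lambda^{-c}$, since the factor $1-\beta^{1-c}$ in Theorem~\ref{thm:FY} degenerates as $c\to1$. The choice $1-c=(\log U_\lambda)^{-1/2}$ handles both requirements. If a cleaner choice is needed, any $1-c\to0$ with $(1-c)\log U_\lambda\to\infty$ works.
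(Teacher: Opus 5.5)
Your proof is correct and is essentially the paper's argument. The liminf bound comes from $\dimH E_\lambda\leq\log2/\log(1/\lambda)$. The limsup bound comes from Falconer--Yavicoli applied to $K_{101}$ and $T(K_{101})$, using $\beta_\lambda=1/4$, $Q_{\lambda,c}^{1/c}=U_\lambda^{-1}(1+r_\lambda^{-c})^{1/c}$ and $r_\lambda\to 5\sqrt7/(3\sqrt{39})$.

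The only difference is the order of limits. The paper fixes $c\in(0,1)$ and takes the limsup first; for fixed $c$ the right side of \eqref{eq:FY-sufficient} is a positive constant, so no balancing is needed. Only then does it let $c\uparrow1$. Your diagonal choice $c_\lambda=1-(\log U_\lambda)^{-1/2}$ also works, but it needs the extra quantitative check you carried out.
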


\begin{proof}
Write $\varepsilon=1-2\lambda$, so $\lambda=(1-\varepsilon)/2$. Then \(\varepsilon\downarrow0\) as \(\lambda\uparrow1/2\). By Lemma~\ref{lem:dimension-reduction}, 
\[
\dimH E_\lambda\leq\dimH K_{\lambda}=\frac{\log2}{-\log\lambda}.
\]
Note that 
\[
\frac{\log2}{-\log\lambda}
=\frac{\log2}{\log2-\log(1-\varepsilon)}
=1-\frac{\varepsilon}{\log2}+O(\varepsilon^2).
\]
It follows that the lower bound for the liminf in \eqref{eq:dim2sided} holds.

For the upper bound on the limsup, fix $c\in(0,1)$. When $\lambda$ is sufficiently close to $1/2$, Lemma~\ref{lem:nearonehalf} gives $\beta_\lambda=1/4$ and \(r_\lambda\geq\rho_*>0\). Hence
\[
Q_{\lambda,c}
\leq U_\lambda^{-c}(1+\rho_*^{-c})\longrightarrow0 \qquad\text{as }\lambda\uparrow\frac{1}{2}.
\] 
Since the right-hand side of \eqref{eq:FY-sufficient} is then a
fixed positive constant, condition \eqref{eq:FY-sufficient} holds
for all \(\lambda\) sufficiently close to \(1/2\). Using \eqref{eq:dim-lower-general}, we obtain
\begin{equation}\label{eq:dimlower}
\dimH E_\lambda
\geq1-768\frac{1-2\lambda}{\lambda}
(1+r_\lambda^{-c})^{1/c}.
\end{equation}
By \eqref{eq:a-b-lambda} and \eqref{eq:r-V},
\[
a_\lambda\to\frac{5}{8},
\qquad
b_\lambda\to\frac{3}{4},
\qquad
r_\lambda\to r_*:=\frac{5\sqrt{7}}{3\sqrt{39}}.
\]
Rearranging \eqref{eq:dimlower}, dividing by
\(1-2\lambda\), and then letting \(\lambda\uparrow1/2\), we obtain
\[
\limsup_{\lambda\uparrow1/2}
\frac{1-\dimH E_\lambda}{1-2\lambda}
\leq
1536(1+r_*^{-c})^{1/c}.
\]
Finally, letting \(c\uparrow1\) gives
\[
1536(1+r_*^{-1})
=
1536\left(1+\frac{3\sqrt{39}}{5\sqrt{7}}\right),
\]
which proves the upper bound in
\eqref{eq:dim2sided}.
\end{proof}

\section{Interior of the Minkowski sum}\label{sec:sum}

We now turn from the intersection problem to the Minkowski sum \[A_{\lambda}= C_\lambda + S^1.\]
Our main tool for proving
Theorem~\ref{thm:sum} is the following direct consequence of Lemma~\ref{lem:filling}. 

\begin{lem}\label{lem:nonlinear-fill}
Let $1/4\leq\lambda<1/2$. Let $a,b\in \R$ and $L>0$. Let $\phi$ be a $C^1$ function on an open
neighborhood of $[a,a+L]$, and define
\[
F(x,y)=y+\phi(x).
\]
Suppose that 
\begin{equation}\label{eq:phiprime}
1-2\lambda\leq\phi'(x)
\leq \frac{\lambda}{1-2\lambda} \qquad (x\in[a,a+L]).
\end{equation}
Then
\[
F\left((a+LK_\lambda)\times(b+LK_\lambda)\right)
=
[F(a,b),F(a+L,b+L)].
\]
\end{lem}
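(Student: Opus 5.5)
We deduce this from Lemma~\ref{lem:filling} by a reflection in the first variable. The function $F(x,y)=y+\phi(x)$ is increasing in both variables, whereas Lemma~\ref{lem:filling} requires $F_u<0<F_v$. Since $K_\lambda$ is symmetric, $1-K_\lambda=K_\lambda$, we have
\[
a+LK_\lambda=\{a+L-L\xi:\xi\in K_\lambda\}=2a+L-(a+LK_\lambda).
\]
So we set $p:=-(a+L)$, $q:=b$, $\ell:=L$, and define
\[
G(u,v):=F(-u,v)=v+\phi(-u).
\]
Then $(p+\ell K_\lambda)$ is carried onto $(a+LK_\lambda)$ by $u\mapsto -u$, because $-(p+\ell K_\lambda)=a+L-LK_\lambda=a+LK_\lambda$. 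Hence
\[
G\bigl((p+\ell K_\lambda)\times(q+\ell K_\lambda)\bigr)=F\bigl((a+LK_\lambda)\times(b+LK_\lambda)\bigr).
\]
The function $G$ is $C^1$ on an open neighborhood of $Q=[p,p+\ell]\times[q,q+\ell]$, since $\phi$ is $C^1$ near $[a,a+L]$ and $u\in[p,p+\ell]$ corresponds to $-u\in[a,a+L]$.

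Next we check \eqref{eq:Fuv} for $G$ on $Q$. We have $G_v=1>0$ and $G_u=-\phi'(-u)$. The lower bound in \eqref{eq:phiprime} gives $\phi'\ge 1-2\lambda>0$, so $G_u<0$. The ratio is $-G_u/G_v=\phi'(-u)$. Condition \eqref{eq:Fuv} therefore asks for
\[
\frac{1-2\lambda}{\lambda}\le\phi'\le\frac1{1-2\lambda},
\]
but \eqref{eq:phiprime} supplies only $1-2\lambda\le\phi'\le\lambda/(1-2\lambda)$. The two ranges do not match, so the roles of the variables must be swapped. This mismatch is the one point needing care.

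Swapping variables inverts the ratio. We set $H(u,v):=G(v,u)$, or equivalently build the reflected function with $x$ as the second variable and $y$ as the reflected first variable. Concretely, take
\[
H(u,v):=-F(v,\,-u)=u-\phi(v)
\]
on $[-(b+L),-b]\times[a,a+L]$. Then $H_u=1>0$ and $H_v=-\phi'(v)<0$, which is the wrong sign pattern, so we negate once more to $\widetilde H(u,v)=\phi(v)-u$. Now $\widetilde H_u=-1<0<\widetilde H_v=\phi'(v)$, and $-\widetilde H_u/\widetilde H_v=1/\phi'(v)$. Inverting the bounds in \eqref{eq:phiprime} gives
\[
\frac{1-2\lambda}{\lambda}\le \frac1{\phi'}\le\frac1{1-2\lambda},
\]
which is exactly \eqref{eq:Fuv}.

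We apply Lemma~\ref{lem:filling} to $\widetilde H$ with $p=-(b+L)$, $q=a$, $\ell=L$. The symmetry $-(p+LK_\lambda)=b+LK_\lambda$ gives
\[
\widetilde H\bigl((p+LK_\lambda)\times(a+LK_\lambda)\bigr)=\{\phi(x)+y: x\in a+LK_\lambda,\ y\in b+LK_\lambda\}.
\]
This is the desired image of $F$. The lemma says this set equals $\widetilde H(Q)$, which by monotonicity of $F$ in both variables is $F([a,a+L]\times[b,b+L])=[F(a,b),F(a+L,b+L)]$. The only obstacle is this bookkeeping of reflections and the variable swap; no new estimate is needed.
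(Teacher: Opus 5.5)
Your proof is correct and is essentially the paper's argument. Both reflect the $y$-variable using $1-K_\lambda=K_\lambda$, let $x$ be the increasing variable so that the slope ratio becomes $1/\phi'$, and then apply Lemma~\ref{lem:filling}; the only difference is that the paper normalizes to $[0,1]^2$ via $G(s,t)=b+L(1-s)+\phi(a+Lt)$, whereas you take $p=-(b+L)$, $q=a$, $\ell=L$ with $\widetilde H(u,v)=\phi(v)-u$. Your false start with $G(u,v)=F(-u,v)$ and the intermediate $H$ can simply be dropped.
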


\begin{proof}
	Define
	\[
	G(s,t)
	:=b+L(1-s)+\phi(a+Lt).
	\]
By the hypothesis on $\phi$, this defines a $C^1$
function on an open neighborhood of $[0,1]^2$. On $[0,1]^2$, we have
\[
G(s,t)=
F(a+Lt,b+L(1-s)).
\]
Moreover, $\partial_sG=-L<0$,
	$\partial_t G=L\phi'(a+Lt)>0$, 
	and
	\[
	-\frac{\partial_sG}{\partial_tG}
	=
	\frac{1}{\phi'(a+Lt)}.
	\]
	By \eqref{eq:phiprime},
	\[
	\frac{1-2\lambda}{\lambda}
	\leq
	-\frac{\partial_sG}{\partial_tG}
	\leq
	\frac{1}{1-2\lambda}.
	\]
	Lemma~\ref{lem:filling} therefore gives
	$G(K_\lambda\times K_\lambda)
	=G([0,1]^2)$.
	Since $1-K_\lambda=K_\lambda$,
	\[
	G(K_\lambda\times K_\lambda)
	=F\left((a+LK_\lambda)\times(b+LK_\lambda)\right).
	\]
	Moreover, $G$ is strictly decreasing in $s$ and
	strictly increasing in $t$, so
	\[
	G([0,1]^2)
	=
	[G(1,0),G(0,1)]
	=
	[F(a,b),F(a+L,b+L)].
	\]
This completes the proof.
\end{proof}

We now apply Lemma~\ref{lem:nonlinear-fill} to construct
intervals contained in vertical sections of $A_{\lambda}$, using the maps $(x,y)\mapsto y+T(\alpha-x)$.  For $\alpha$ near $\sqrt{5}/5$ and $(x,y)$ in a sufficiently
small Cantor cylinder product near $(0,0)$,
the derivative with respect to $x$ is uniformly close
to $1/2$. For $\lambda>1/4$, the lemma gives nondegenerate interval images. Then continuity in $\alpha$ gives an open rectangle in $A_\lambda$.

For a fixed $\alpha$, define
\begin{equation*}
H_\alpha(x,y):=y+T(\alpha-x) \qquad\text{whenever }0<\alpha-x<1.
\end{equation*}
If $H_\alpha(x,y)=t$, then
\begin{equation}\label{eq:sum-decomposition}
(\alpha,t)=(x,y)+(\alpha-x,T(\alpha-x)),
\end{equation}
and the second vector belongs to $S^1$. Hence $x,y\in K_{\lambda}$ implies $(\alpha,t)\in A_{\lambda}$.
Writing $H_\alpha(x,y)=y+\phi_\alpha(x)$ with $\phi_\alpha(x)=T(\alpha-x)$, one has
\begin{equation*}
\phi_\alpha'(x)
=\frac{\alpha-x}{\sqrt{1-(\alpha-x)^2}}
=\psi(\alpha-x).
\end{equation*}
Note that $\psi$ is continuous and strictly increasing, and $\psi(u_0)=1/2$, where $u_0:=\sqrt{5}/5$.

Now we are ready to prove Theorem~\ref{thm:sum}. 

\begin{proof}[Proof of Theorem~\ref{thm:sum}]
Since $\lambda>1/4$, the value $\psi(u_0)=1/2$ lies in $(1-2\lambda,\lambda/(1-2\lambda))$.
By continuity of $\psi$ at $u_0$, choose $\delta>0$ such that
$[u_0-\delta,u_0+\delta]\subset(0,1)$
and
\begin{equation*}
1-2\lambda<\psi(u)<\frac{\lambda}{1-2\lambda} \qquad (|u-u_0|\leq\delta).
\end{equation*}
Choose $n\in \N$ with $L:=\lambda^n<\delta/2$. If $|\alpha-u_0|<\delta/2$ and $x\in [0,L]$, then $|\alpha-x-u_0|<\delta$. 
Therefore Lemma~\ref{lem:nonlinear-fill}, applied to
\(H_\alpha\), gives
\begin{equation}\label{eq:HalphaLK}
	H_\alpha(LK_{\lambda}\times LK_{\lambda})
	=[T(\alpha),L+T(\alpha-L)].
\end{equation}
Since $T$ is strictly decreasing, $T(u_0)<L+T(u_0-L)$. 
Choose a nondegenerate closed interval $J\subset (T(u_0),L+T(u_0-L))$. Continuity of the two endpoints in $\alpha$ gives an open interval $I$ containing $u_0$, with $I\subset (u_0-\delta/2,u_0+\delta/2)$, such that $J\subset [T(\alpha),L+T(\alpha-L)]$ for every $\alpha\in I$. Since $LK_{\lambda}=K_{0^n}\subset K_{\lambda}$, \eqref{eq:HalphaLK} and \eqref{eq:sum-decomposition} imply that $\interior (I\times J)\subset A_{\lambda}$.  This is a nonempty open rectangle.
\end{proof}

We now combine Theorem~\ref{thm:sum}
with the dimension and measure results of Simon and Taylor~\cite{SimonTaylor2020,SimonTaylor2022} to prove Corollary~\ref{cor:classification}.

\begin{proof}[Proof of Corollary~\ref{cor:classification}] 
For $\lambda>1/4$, $\interior A_{\lambda}\neq \varnothing$ by 
Theorem~\ref{thm:sum}. 
For \(0<\lambda<1/4\),
\cite[Theorem~2.1(b)]{SimonTaylor2022} gives $\dimH A_{\lambda}
=
1+\log4/\log(1/\lambda)
<2,$
so the interior is empty. At $\lambda=1/4$, $A_{\lambda}$ has zero area by  \cite[Remark~2.8(ii)]{SimonTaylor2020} and
\cite[Remark~2.10(2)]{SimonTaylor2022}, again
giving empty interior.
\end{proof}

We conclude this section by extending the argument of
Theorem~\ref{thm:sum} to regular $C^1$ closed curves. We call a $C^1$ map $\gamma:\R/\Z\to\R^2$ \emph{regular} if $\gamma'(t)\neq 0$ for every $t\in \R/\Z$. The image of such a regular map is called a regular $C^1$ closed curve in $\R^2$. 

\begin{cor}\label{cor:closed-curve-sum}
	Let $\Gamma\subset\R^2$ be the image of a regular $C^1$ map
	$\gamma:\R/\Z\to\R^2$.
	Then
	\[
	\interior(C_\lambda+\Gamma)\neq\varnothing
	\qquad (1/4<\lambda<1/2).
	\]
\end{cor}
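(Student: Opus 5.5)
We follow the proof of Theorem~\ref{thm:sum}, but first locate a point of $\Gamma$ where $\Gamma$ is locally a graph $y=\phi(x)$ with $\phi'$ close to a value lying in the window $(1-2\lambda,\lambda/(1-2\lambda))$.

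\emph{Choice of a good point on $\Gamma$.} Since $\lambda>1/4$, we have $1-2\lambda<1/2<\lambda/(1-2\lambda)$, so the open interval $S:=(1-2\lambda,\lambda/(1-2\lambda))$ of slopes is nonempty. The unit tangent map $t\mapsto\gamma'(t)/|\gamma'(t)|$ is continuous on $\R/\Z$. Because $\gamma$ is a closed curve, $\int_0^1\gamma'(t)\,dt=0$. Hence each coordinate of $\gamma'$ takes both signs or vanishes identically, and both coordinates cannot vanish identically since $\gamma'\neq0$. We claim the tangent direction attains some line through the origin whose slope lies in $S$; up to sign, this means a direction $(1,s)$ with $s\in S$. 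The intended argument is that the tangent indicatrix, viewed in $\R P^1$, is a closed loop that cannot stay within an arc of directions of length less than $\pi$. Otherwise some linear functional $\ell$ would satisfy $\ell(\gamma')>0$ throughout, contradicting $\int\ell(\gamma')=0$. So the set of tangent directions, taken modulo $\pm$, is a connected subset of $\R P^1$ not contained in any open half-circle. Passing to the double cover $S^1$ before quotienting avoids subtleties: the set of unit tangents $\gamma'/|\gamma'|$ is a connected arc in $S^1$ not contained in any open semicircle. Such an arc covers at least a closed semicircle, so its image in $\R P^1$ is everything. Therefore there exists $t_0$ with $\gamma'(t_0)$ parallel to $(1,s_0)$ for some $s_0\in S$.

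\emph{Local graph and filling.} Near $t_0$, the first coordinate $\gamma_1$ has $\gamma_1'(t_0)\neq0$. By the inverse function theorem, a piece of $\Gamma$ is the graph $\{(u,\phi(u)):|u-u_0|<\delta_0\}$ of a $C^1$ function with $\phi'(u_0)=s_0\in S$, where $u_0=\gamma_1(t_0)$. By continuity, choose $\delta>0$ with $\phi'(u)\in S$ for $|u-u_0|\le\delta$. For $\alpha$ near $u_0$ define
\[
H_\alpha(x,y):=y+\phi(\alpha-x).
\]
Then $\partial_x$ of $\phi(\alpha-x)$ equals $-\phi'(\alpha-x)$, which has the wrong sign for Lemma~\ref{lem:nonlinear-fill}. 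We fix this using the symmetry $K_\lambda=1-K_\lambda$: substitute $x=L-x'$ on the cylinder $LK_\lambda$, whose reflection $L-LK_\lambda$ equals $LK_\lambda$. This gives $\tilde\phi_\alpha(x')=\phi(\alpha-L+x')$ with derivative $\phi'(\alpha-L+x')\in S$ once $L=\lambda^n<\delta/2$ and $|\alpha-u_0|<\delta/2$. Lemma~\ref{lem:nonlinear-fill} then shows that $H_\alpha(LK_\lambda\times LK_\lambda)$ is the nondegenerate interval $[\phi(\alpha-L),\,L+\phi(\alpha)]$; its length is at least $L-\phi'_{\max}L\cdot0$ and is positive because $F(a,b)<F(a+L,b+L)$ by strict monotonicity. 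Whenever $H_\alpha(x,y)=t$, we have $(\alpha,t)=(x,y)+(\alpha-x,\phi(\alpha-x))\in C_\lambda+\Gamma$, using $LK_\lambda\subset K_\lambda$.

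\emph{Conclusion and main obstacle.} The endpoints of these intervals depend continuously on $\alpha$. So, exactly as in the proof of Theorem~\ref{thm:sum}, a fixed closed interval $J$ lies inside every one of them for $\alpha$ in an open interval $I\ni u_0$, and $\interior(I\times J)\subset C_\lambda+\Gamma$. The main obstacle is the first step: rigorously showing that the tangent direction attains a slope in $S$. The integral argument gives the "not in an open half-plane" property. Because $S$ is an open set of slopes, it then suffices to know that the set of tangent directions in $\R P^1$ is all of $\R P^1$, and the connected-arc argument in $S^1$ provides this.
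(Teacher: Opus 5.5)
Your proof is correct. Its second half (local graph, Lemma~\ref{lem:nonlinear-fill}, continuity in $\alpha$) is the paper's argument. The differences are in how the good tangent is found and in the sign of the slope.

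\textbf{Finding the tangent.} The paper takes a maximum point $t_0$ of the periodic function $Y(t)+X(t)/2$. There $Y'(t_0)=-\frac12X'(t_0)$, and regularity forces $X'(t_0)\neq0$. So $\Gamma$ is locally the graph of some $g$ with slope exactly $-1/2$. Then $x\mapsto g(\alpha-x)$ already has derivative near $+1/2$, and the proof of Theorem~\ref{thm:sum} transfers verbatim with $T$ replaced by $g$.

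You instead use $\int_0^1\gamma'(t)\,dt=0$ together with the fact that a compact connected subset of $S^1$ not contained in any open semicircle contains a closed semicircle. This shows that every tangent direction occurs. The argument is valid, and the closed-arc claim deserves its one-line justification: a closed arc of length $<\pi$ lies in the open semicircle centred at its midpoint. It is heavier than needed, though. The paper's maximization trick applied to $Y-sX$ produces any prescribed slope $s$ directly, and likewise uses closedness only through compactness of the parameter circle.

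\textbf{The sign of the slope.} Having chosen a positive slope $s_0\in(1-2\lambda,\lambda/(1-2\lambda))$, you need the substitution $x=L-x'$ and the symmetry $L-LK_\lambda=LK_\lambda$ to restore the sign required by Lemma~\ref{lem:nonlinear-fill}. This is correct and gives the interval $[\phi(\alpha-L),L+\phi(\alpha)]$. It also makes explicit why either sign of slope works, in line with the paper's subsequent remark. However, taking the slope $-s_0$, which your own first step provides, would have avoided the reflection entirely.

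\textbf{Minor slip.} The expression ``$L-\phi'_{\max}L\cdot0$'' is garbled. The length of the interval is $L+\phi(\alpha)-\phi(\alpha-L)>L$, since $\phi$ is increasing on $[\alpha-L,\alpha]$.
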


\begin{proof}
	Write $\gamma(t)=(X(t),Y(t))$.
	At a maximum point $t_0$ of the function
	$t\mapsto Y(t)+X(t)/2$, we have
	\[
	Y'(t_0)+\frac12X'(t_0)=0.
	\]
	Regularity implies that $X'(t_0)\neq0$.
Hence $\Gamma$ contains the graph of a $C^1$ function $g$
on an open interval about $u_0:=X(t_0)$, with
$g'(u_0)=-1/2$.
	The proof of Theorem~\ref{thm:sum} now applies with $T$
	replaced by $g$, since it only requires the derivative
	of the graph function to be sufficiently close to $-1/2$
	on a small neighborhood.
\end{proof}

\begin{rem}
In Corollary~\ref{cor:closed-curve-sum}, closedness is used only to guarantee a tangent of slope
	$-1/2$.
By reflection and interchange of the coordinates, the same
conclusion holds for any regular $C^1$ arc $\Gamma$ having
a tangent of slope $\pm1/2$ or $\pm2$ at an interior point.
\end{rem}

\section{Concluding remarks and open problems}\label{sec:open-problems}

Corollary~\ref{cor:classification} settles the interior question for
$C_\lambda+S^1$. Several gaps remain in the cardinality and dimension
results for $E_\lambda$:
\begin{enumerate}[label=\textup{(\arabic*)}]
	\item For $2-\sqrt{3}<\lambda\leq\lambda_\infty$, a complete classification into finite and infinite intersections is not known. 
	
	\item For $\lambda_\infty<\lambda\leq(\sqrt{3}-1)/2$, the intersection is infinite, but it is not known whether
	\(E_\lambda\) has the cardinality of the continuum for every parameter in
	this range.   Proposition~\ref{prop:ABlambda-star} only shows that the inequalities underlying the double-covering criterion in 
	Proposition~\ref{prop:continuum-criterion} cannot hold for any $\lambda\leq (\sqrt{3}-1)/2$.
	\item For $(\sqrt{3}-1)/2<\lambda\leq\sqrt{2}-1$, the intersection has the cardinality of the continuum, but it is not known whether its Hausdorff dimension must be positive. At $\lambda=\sqrt{2}-1$, the two Hunt--Kan--Yorke
	inequalities become equalities for the symmetric thickness pair
	$(1+\sqrt2,1+\sqrt2)$, so the
	thickness argument used here is critical.
	\item The first-order constants in \eqref{eq:dim2sided} are far apart. Determining the exact asymptotic behavior of $1-\dimH E_\lambda$ as $\lambda\uparrow1/2$ appears to require substantially sharper quantitative intersection estimates.
\end{enumerate}

\noindent\textbf{Acknowledgements.}
We are grateful to Prof. Kan Jiang for informing us of his joint work with Zhikun Xie and for sharing their
manuscript~\cite{JiangXie2026} with us. We acknowledge their priority for Theorem~\ref{thm:intersection}(i). J. Li was supported by the National Natural Science Foundation of China (Grant No.~12571009) and the Natural Science Foundation of Hunan Province, China (Grant No.~2026JJ40003). Z. Shen was supported by the National Natural Science Foundation of China (Grant No.~12301011).  Y. Wu was supported by the National Natural Science Foundation of China
(Grant No.~12301110). AI tools (ChatGPT and DeepSeek) were used during the preparation of the paper to assist with the elaboration of some proof details, checking calculations, and improving the exposition. All mathematical statements and proofs were verified by the authors. The authors take full responsibility for the paper.

\end{document}